\newtheorem{theorem}{Theorem}[section]
\newtheorem{lemma}[theorem]{Lemma}
\newtheorem{proposition}[theorem]{Proposition}
\newcommand{\be}{\begin{equation}}
\newcommand{\ee}{\end{equation}}
\newcommand{\ba}{\begin{array}}
\newcommand{\ea}{\end{array}}
\newcommand{\bpm}{\begin{pmatrix}}
\newcommand{\epm}{\end{pmatrix}}
\newcommand{\bea}{\begin{eqnarray}}
\newcommand{\eea}{\end{eqnarray}}
\newcommand{\beaa}{\begin{eqnarray*}}
\newcommand{\eeaa}{\end{eqnarray*}}
\newcommand{\bal}{\begin{align}}
\newcommand{\eal}{\end{align}}
\newcommand{\baln}{\begin{align*}}
\newcommand{\ealn}{\end{align*}}
\newcommand{\vu}{u}
\newcommand{\vv}{v}
\newcommand{\vW}{W}
\newcommand{\cD}{{\mathcal{D}}}
\newcommand{\cK}{{\mathcal{K}}}
\newcommand{\cL}{{\mathcal{L}}}
\newcommand{\cN}{{\mathcal{N}}}
\newcommand{\cX}{{\mathcal{X}}}
\newcommand{\cY}{{\mathcal{Y}}}
\newcommand{\RR}{\mathbb{R}} 
\newcommand{\st}{\mbox{ s.t. }}
\DeclareMathOperator*{\argmin}{arg\,min} 
\newcommand{\bc}{\begin{center}}
\newcommand{\ec}{\end{center}}
\newcommand{\bdm}{\begin{displaymath}}
\newcommand{\edm}{\end{displaymath}}
\newcommand{\beq}{\begin{equation}}
\newcommand{\eeq}{\end{equation}}
\newcommand{\bfl}{\begin{flushleft}}
\newcommand{\efl}{\end{flushleft}}
\newcommand{\bt}{\begin{tabbing}}
\newcommand{\et}{\end{tabbing}}
\newcommand{\beqn}{\begin{eqnarray}}
\newcommand{\eeqn}{\end{eqnarray}}
\newcommand{\beqs}{\begin{align*}} 
\newcommand{\eeqs}{\end{align*}}  
\newtheorem{remark}{Remark}[section]
\newtheorem{assumption}{Assumption}
\begin{document}

\title{Accelerated first-order primal-dual proximal methods for linearly constrained composite convex programming}

\author{Yangyang Xu\thanks{\url{yxu76@ua.edu}. Department of Mathematics, University of Alabama, Tuscaloosa, AL}
}

\date{}

\maketitle

\begin{abstract}
Motivated by big data applications, first-order methods have been extremely popular in recent years. However, naive gradient methods generally converge slowly. Hence, much efforts have been made to accelerate various first-order methods. This paper proposes two accelerated methods towards solving structured linearly constrained convex programming, for which we assume composite convex objective that is the sum of a differentiable function and a possibly nondifferentiable one.

The first method is the accelerated linearized augmented Lagrangian method (LALM). At each update to the primal variable, it allows linearization to the differentiable function and also the augmented term, and thus it enables easy subproblems. Assuming merely weak convexity, we show that LALM owns $O(1/t)$ convergence if parameters are kept fixed during all the iterations and can be accelerated to  $O(1/t^2)$ if the parameters are adapted, where $t$ is the number of total iterations.

The second method is the accelerated linearized alternating direction method of multipliers (LADMM). In addition to the composite convexity, it further assumes two-block structure on the objective. Different from classic ADMM, our method allows linearization to the objective and also augmented term to make the update simple. Assuming strong convexity on one block variable, we show that LADMM also enjoys $O(1/t^2)$ convergence with adaptive parameters.  This result is a significant improvement over that in [Goldstein et. al, SIIMS'14], which requires strong convexity on both block variables and no linearization to the objective or augmented term. 

Numerical experiments are performed on quadratic programming, image denoising, and support vector machine. The proposed accelerated methods are compared to nonaccelerated ones and also existing accelerated methods. The results demonstrate the validness of acceleration and superior performance of the proposed methods over existing ones.

\vspace{0.3cm}

\noindent {\bf Keywords:} acceleration, linearization, first-order method, augmented Lagrangian method (ALM), alternating direction method of multipliers (ADMM)
\vspace{0.3cm}

\noindent {\bf Mathematics Subject Classification:} 90C06, 90C25, 68W40, 49M27.

\end{abstract}

\section{Introduction}
In recent years, motivated by applications that involve extremely big data, first-order methods with or without splitting techniques have received tremendous attention in a variety of areas such as statistics, machine learning, data mining, and image processing. Compared to traditional methods like the Newton's method, first-order methods only require gradient information instead of the much more expensive Hessian. Splitting techniques can further decompose a single difficult large-scale problem into smaller and easier ones. However, in both theory and practice, first-order methods often converge slowly if no additional techniques are applied. For this reason, lots of efforts have been made to accelerate various first-order methods.

In this paper, we consider the linearly constrained problem
\begin{equation}\label{eq:lc-prob}
\min_x F(x), \st Ax=b,
\end{equation}
where $F$ is a proper closed convex but possibly nondifferentiable function. We allow $F$ to be extended-valued, and thus in addition to the linear constraint, \eqref{eq:lc-prob} can also include the constraint $x\in\cX$ if part of $F$ is the indicator function of a convex set $\cX$.

The augmented Lagrangian method (ALM) \cite{bertsekas2014constrained}  is one most popular approach to solve constrained optimization problems like \eqref{eq:lc-prob}. Let 
\begin{equation}\label{eq:aug-fun}\cL_\beta(x,\lambda)=F(x)-\langle\lambda, Ax-b\rangle+\frac{\beta}{2}\|Ax-b\|^2
\end{equation}
be the augmented Lagrangian function. Then ALM for \eqref{eq:lc-prob} iteratively performs the updates
\begin{subequations}\label{eq:alm}
\begin{align}
&x^{k+1}\in\argmin_x \cL_\beta(x, \lambda^k),\label{eq:alm-x}\\
&\lambda^{k+1}=\lambda^k-\beta(Ax^{k+1}-b).\label{eq:alm-lam}
\end{align}
\end{subequations}
In general, the subproblem \eqref{eq:alm-x} may not have a solution or have more than one solutions, and even if a unique solution exists, it could be difficult to find the solution. We will assume certain structures of $F$ and also modify the updates in \eqref{eq:alm} to have well-defined and easier subproblems.

\subsection{Linearized ALM for linearly constrained composite convex problems}
We first assume the composite convexity structure, i.e., the objective in \eqref{eq:lc-prob} can be written as:  \begin{equation}\label{eq:obj-lalm}F(x)=f(x)+g(x),\end{equation} 
where $f$ is a convex Lipschitz differentiable function, and $g$ is a proper closed convex but possibly nondifferentiable function. 
Hence, the problem \eqref{eq:lc-prob} reduces to the linearly constrained composite convex programming:
\begin{equation}\label{eq:prob-lalm}
\min_x f(x)+g(x), \st Ax=b.
\end{equation}
Usually, $g$ is simple such as the indicator function of the nonnegative orthant or $\ell_1$-norm, but the smooth term $f$ could be complicated like the logistic loss function. 

Our first modification to the update in \eqref{eq:alm-x} is to approximate $f$ by a simple funtion. Typically, we replace $f$ by a quadratic function that dominates $f$ around $x^k$, resulting in the linearized ALM as follows: 
\begin{subequations}\label{eq:lalm}
\begin{align}
&x^{k+1}\in\argmin_x \langle \nabla f(x^k)-A^\top\lambda^k, x\rangle+g(x)+\frac{\beta}{2}\|Ax-b\|^2+\frac{1}{2}\|x-x^k\|_P^2,\label{eq:lalm-x}\\
&\lambda^{k+1}=\lambda^k-\beta(Ax^{k+1}-b),\label{eq:lalm-lam}
\end{align}
\end{subequations}
where the weight matrix $P$ is positive semidefinite (PSD) and can be set according to the Lipschitz constant of $\nabla f$. Choosing appropriate $P$ like $\eta I-\beta A^\top A$, we can also linearize the augmented term and have a closed form solution if $g$ is simple.

The linearization technique here is not new. It is commonly used in the proximal gradient method, which can be regarded as a special case of \eqref{eq:lalm} by removing the linear constraint $Ax=b$. It has also been used in the linearized alternating direction method of multipliers (ADMM) \cite{ouyang2015accelerated} and certain primal-dual methods (e.g., \cite{condat2013primal, gao2015first, GXZ-RPDCU2016}).

Our second modification is to adaptively choose the parameters in the linearized ALM and also linearize $f$ at a point other than $x^k$ to accelerate the convergence of the method. Algorithm \ref{alg:alalm} summarizes the proposed accelerated linearized ALM. The idea of using three point sequences for acceleration is first adopted in \cite{lan2012optimal}, and recently it is used in \cite{ouyang2015accelerated} to accelerate the linearized ADMM. 
\begin{algorithm}\caption{Accelerated linearized augmented Lagrangian method for \eqref{eq:prob-lalm}}\label{alg:alalm}
\DontPrintSemicolon
\textbf{Initialization:} choose $\bar{x}^1=x^1$ and set $\lambda^1=0.$\;
\For{$k=1,2,\ldots$}{
Choose parameters $\alpha_k,\beta_k,\gamma_k$ and $P^k$ and perform updates:
\begin{align}
&\hat{x}^k=(1-\alpha_k)\bar{x}^k+\alpha_k x^k,\label{eq:new-xhat}\\
&x^{k+1}\in\argmin_x \langle \nabla f(\hat{x}^k)-A^\top \lambda^k, x\rangle +g(x)+\frac{\beta_k}{2}\|Ax-b\|^2+\frac{1}{2}\|x-x^k\|_{P^k}^2,\label{eq:new-x}\\
&\bar{x}^{k+1}=(1-\alpha_k)\bar{x}^k+\alpha_kx^{k+1},\label{eq:new-xbar}\\
&\lambda^{k+1}=\lambda^k-\gamma_k(Ax^{k+1}-b).\label{eq:new-lambda}
\end{align}
\If{A stopping condition is satisfied}{
Return $(x^{k+1},\bar{x}^{k+1},\lambda^{k+1})$.
}
}
\end{algorithm}

\subsection{Linearized ADMM for two-block structured problems}
In this section, we explore more structures of $F$. In addition to the composite convexity structure, we assume that the variable $x$ and accordingly the matrix $A$ can be partitioned into two blocks, i.e., 
\begin{equation}\label{eq:chg-var}
x=(y,z), \quad A=(B,C),
\end{equation} and the objective can be written as
\begin{equation}\label{eq:F-padmm}
F(x)=f(y)+g(z)+h(z),
\end{equation}
where $f$ and $g$ are proper closed convex but possibly nondifferentiable functions, and $h$ is a convex Lipschitz differentiable function. Hence, the problem \eqref{eq:lc-prob} reduces to the linearly constrained two-block structured problem:
\begin{equation}\label{eq:prob-padmm}
\min_{y,z} f(y)+g(z)+h(z), \st By+Cz=b.
\end{equation}

ADMM \cite{Glowinski1975, gabay1976dual} is a popular method that explores the two-block structure of \eqref{eq:prob-padmm} by alternatingly updating $y$ and $z$, followed by an update to the multiplier $\lambda$. More precisely, it iteratively performs the updates:
\begin{subequations}\label{eq:admm}
\begin{align}
&y^{k+1}\in\argmin_y \cL_\beta(y, z^k, \lambda^k),\label{eq:admm-y}\\
&z^{k+1}\in\argmin_z \cL_\beta(y^{k+1}, z, \lambda^k),\label{eq:admm-z}\\
&\lambda^{k+1}=\lambda^k-\beta(By^{k+1}+Cz^{k+1}-b),\label{eq:admm-lam}
\end{align}
\end{subequations}
where $\cL_\beta$ is given in \eqref{eq:aug-fun} with the notation in \eqref{eq:chg-var} and \eqref{eq:F-padmm}.
  It can be regarded as an inexact ALM, in the sense that it only finds an approximate solution to \eqref{eq:alm-x}. If \eqref{eq:admm-y} and \eqref{eq:admm-z} are run repeatedly before updating $\lambda$, a solution to \eqref{eq:alm-x} would be found, and thus the above update scheme reduces to that in \eqref{eq:alm}. However, one single run of \eqref{eq:admm-y} and \eqref{eq:admm-z}, followed by an update to $\lambda$, is sufficient to guarantee the convergence. Thus ADMM is often perferable over ALM on solving the two-block structured problem \eqref{eq:prob-padmm} since updating $y$ and $z$ separately could be much cheaper than updating them jointly.
 
Usually $f$ and $g$ are simple, but the smooth term $h$ in \eqref{eq:prob-padmm} could be complicated and thus make the $z$-update in \eqref{eq:admm-z} difficult. We apply the same linearization technique as in \eqref{eq:lalm-x} to \eqref{eq:admm-z} and in addition adaptively choose the parameters to accelerate the method. Algorithm \ref{alg:apadmm} summarizes the accelerated linearized ADMM. If $f$ and $g$ are simple, we can have closed form solutions to \eqref{eq:apadmm-y} and \eqref{eq:apadmm-z} by choosing appropriate $P^k$ and $Q^k$ to linearize the augmented terms.
\begin{algorithm}\caption{Accelerated linearized alternating direction method of multipliers for \eqref{eq:prob-padmm}}\label{alg:apadmm}
\DontPrintSemicolon
\textbf{Initialization:} choose $(y^1, z^1)$ and set $\lambda^1=0$.\;
\For{$k=0,1,2,\ldots$}{
Choose parameters $\beta_k,\gamma_k, P^k$ and $Q^k$ and perform updates:
\begin{subequations}\label{eq:apadmm}
\begin{align}
&y^{k+1}=\argmin_y f(y)-\langle \lambda^k, By\rangle + \frac{\beta_k}{2}\|By+Cz^k-b\|^2+\frac{1}{2}\|y-y^k\|_{P^k}^2,\label{eq:apadmm-y}\\
&z^{k+1}=\argmin_z g(z)+\langle \nabla h(z^k)-C^\top \lambda^k, z\rangle +\frac{\beta_k}{2}\|By^{k+1}+Cz-b\|^2+\frac{1}{2}\|z-z^k\|_{Q^k}^2,\label{eq:apadmm-z}\\
&\lambda^{k+1}=\lambda^k-\gamma_k(By^{k+1}+Cz^{k+1}-b).\label{eq:apadmm-lam} 
\end{align}
\end{subequations}
\If{A stopping condition is satisfied}{
Return $(y^{k+1},z^{k+1},\lambda^{k+1})$.
}
}
\end{algorithm}

\subsection{Related works}
It appears that \cite{nesterov1983method} is the first accelerated gradient method for general smooth convex programming. However, according to the google citation, the work does not really attract much attention until late 2010's. One possible reason could be that the problems people encountered before were not too large so second-order methods can handle them very efficiently. Since 2009, accelerated gradient methods have become extremely popular partly due to \cite{FISTA2009, nesterov2013gradient} that generalize the acceleration idea of \cite{nesterov1983method} to composite convex optimization problems and also due to the increasingly large scale problems arising in many areas. Both \cite{FISTA2009, nesterov2013gradient} achieve optimal rate for first-order methods, but their acceleration techniques look quite different. The former is essentially based on an extrapolation technique while the latter relies on a sequence of estimate functions with adaptive parameters. The recent work \cite{wibisono2016variational} studies a few accelerated methods from a continuous-time perspective. It is unclear how to apply that idea to primal-dual methods.

Although the methods in \cite{FISTA2009, nesterov2013gradient} can conceptually handle constrained problems, they require simple projection to the constraint set. Hence, they are not really good choices if we consider the structured linearly constrained problem \eqref{eq:prob-lalm} or \eqref{eq:prob-padmm}. However, the acceleration idea can still be applied. The ALM method in \eqref{eq:alm} is accelerated in \cite{he2010aalm} by using an extrapolation technique similar to that in \cite{FISTA2009} to the multiplier $\lambda$. While \cite{he2010aalm} requires the objective to be smooth, \cite{kang2013accelerated} extends it to general convex problems, and \cite{kang2015inexact} further reduces the requirement of exactly solving subproblems by assuming strong convexity of the objective. All these accelerated ALM methods do not consider any linearization to the objective or the augmented term. One exception is \cite{huang2013acLBM} that linearizes the augmented term and requires strong convexity of the primal problem in its analysis. Therefore, towards finding a solution to \eqref{eq:prob-lalm}, they may need to solve difficult subproblems if the smooth term $f$ is complicated.

The extrapolation technique in \cite{FISTA2009} has also been applied to accelerate the ADMM method in \cite{goldstein2014fast} for solving two-block structured problems like \eqref{eq:prob-padmm}. It requires both $f$ and $g+h$ to be strongly convex, and the extrapolation is performed to the multiplier and the secondly updated block variable. In addition, \cite{goldstein2014fast} does not consider linearization to the smooth term $h$ or the augmented term, and hence its applicability is restricted.  Although the acceleration is observed empirically in \cite{goldstein2014fast} for weakly convex problems, no convergence rate has been shown. A later work \cite{kadkhodaie2015accelerated} accelerates the nonlinearized ADMM by renewing the second updated block variable again after extrapolating the multiplier. It still requires strong convexity on both $f$ and $g+h$. Without assuming any strong convexity to the objective function, \cite{ouyang2015accelerated} achieves partial acceleration on linearized ADMM for solving problems in the form of \eqref{eq:prob-padmm}. It shows that the decaying rate related to the gradient Lipschitz constant $L_h$ can be $O(1/t^2)$ while the rate for other parts remains $O(1/t)$, where $t$ is the number of iterations. Without the linear constraint, the result in \cite{ouyang2015accelerated} matches the optimal rate of first-order methods. 

Different from the extrapolation technique used in the above mentioned accelerated ALM and ADMM methods, \cite{ouyang2015accelerated} follows the work \cite{lan2012optimal} and uses three point sequences and adaptive parameters. Algorithm \ref{alg:alalm} employs the same idea, and our result indicates that the acceleration to the linearized ALM method is not only applied to the gradient Lipschitz constant but also  to other parts, i.e., full acceleration. To gain full acceleration to Algorithm \ref{alg:apadmm}, we will require either $f$ or $g+h$ to be strongly convex, which is strictly weaker than that assumed in \cite{goldstein2014fast}. This assumption is also made in several accelerated primal-dual methods for solving bilinear saddle-point problems, e.g., \cite{chambolle2011first, chen2014optimal, he2016accelerated, bredies2016accelerated}. The outstanding work \cite{chambolle2011first} presents a framework of primal-dual method for the problem:
\begin{equation}\label{eq:sd-prob}
\min_{x\in\cX}\max_{y\in\cY} \langle Kx, y\rangle +G(x)-F(y),
\end{equation}
where $G$ and $F$ are both proper closed convex functions, and $K$ is a bounded linear operator. It is shown in \cite{chambolle2011first} that the method has $O(1/t^2)$ convergence if either $F$ or $G$ is strongly convex. As shown in \cite{GXZ-RPDCU2016}, the primal-dual method presented in \cite{chambolle2011first} is a special case of linearized ADMM applied to the dual problem of \eqref{eq:sd-prob} about $y$. Hence, it can fall into one case of Algorithm \ref{alg:apadmm}. However, \cite{chambolle2011first} sets parameters in a different way from what we use to accelerate the more general linearized ADMM method; see the example in section \ref{sec:denois}. On solving \eqref{eq:sd-prob}, the Douglas-Rachford splitting method has recently been applied and also accelerated in \cite{bredies2016accelerated} by assuming one of $F$ and $G$ to be strongly convex. In addition, \cite{dang2014randomized} generalizes the work \cite{chambolle2011first} to multi-block structured problems, and the generalized method still enjoy $O(1/t^2)$ convergence if strong convexity is assumed. Without assuming strong convexity, \cite{chen2014optimal} proposes a new primal-dual method for the saddle-point problem \eqref{eq:sd-prob} and achieves partial acceleration similar to what achieved in \cite{ouyang2015accelerated}.

Acceleration techniques have also been applied to other types of methods to different problems such as in coordinate descent methods (e.g., \cite{xu2013block, lin2014accelerated, fercoq2015accelerated}) and stochastic approximation methods (e.g., \cite{lan2012optimal, ghadimi2016accelerated}). Extending our discussion to these methods will be out of the scope of this paper. Interested readers are referred to those papers we mention here and the references therein.

\subsection{Contributions} We summarize our main contributions below.
\begin{itemize}
\item We propose an accelerated linearized ALM method for solving linearly constrained composite convex programming. By linearizing the possibly complicated smooth term in the objective, the method enables easy subproblems. Our acceleration strategy follows \cite{ouyang2015accelerated} that considers accelerated linearized ADMM method. Different from partial acceleration achieved in \cite{ouyang2015accelerated}, we obtain full acceleration and achieve the optimal $O(1/t^2)$ convergence rate by assuming merely weak convexity. 
\item We also propose an accelerated linearized ADMM method for solving two-block structured linearly constrained convex programming, where in the objective, one block variable has composite convexity structure. While \cite{goldstein2014fast} requires strong convexity on both block variables to achieve $O(1/t^2)$ convergence for nonlinearized ADMM, we only need strong convexity on one of them. Furthermore, linearization is allowed to the smooth term in the objective and also to the augmented Lagrangian term, and thus the method enables much easier subproblems than those for nonlinearized ADMM.
\item We test the proposed methods on quadratic programming, total variation regularized image denoising problem, and the elastic net regularized support vector machine. We compare them to nonaccelerated methods and also two other accelerated first-order methods. The numerical results demonstrate the validness of acceleration and also superiority of the proposed accelerated methods over other accelerated ones.
\end{itemize}

\subsection{Outline} The rest of the paper is organized as follows. In section \ref{sec:analysis}, we analyze Algorithm \ref{alg:alalm} and Algorithm \ref{alg:apadmm} with both fixed and adaptive parameters. Numerical experiments are performed in section \ref{sec:numerical}, and finally section \ref{sec:conclusion} concludes the paper and presents some interesting open questions. 

\section{Convergence analysis}\label{sec:analysis}
In this section, we analyze the convergence of Algorithms \ref{alg:alalm} and \ref{alg:apadmm}. Assuming merely weak convexity, we show that Algorithm \ref{alg:alalm} with adaptive parameters enjoys a fast convergence with rate $O(1/t^2)$, where $t$ is the number of total iterations. For Algorithm \ref{alg:apadmm}, we establish the same order of convergence rate by assuming strong convexity on the $z$-part.

\subsection{Notation and preliminary lemmas} 
Before proceeding with our analysis, let us introduce some notation and preliminary lemmas.

We denote $\cX^*$ as the solution set of \eqref{eq:lc-prob}. A point $x^*$ is a solution to \eqref{eq:lc-prob} if there exists $\lambda^*$ such that the KKT conditions hold:
\begin{subequations}\label{eq:kkt-lalm}
\begin{align}
&0\in\partial F(x^*)-A^\top\lambda^*,\label{eq:kkt-lalm-d}\\
&Ax^*-b=0,\label{eq:kkt-lalm-p}
\end{align}
\end{subequations}
Together with the convexity of $F$, the conditions in \eqref{eq:kkt-lalm} implies that
\begin{equation}\label{eq:opt-lalm}
F(x)-F(x^*)-\langle\lambda^*, Ax-b\rangle\ge 0,\,\forall x.
\end{equation}
For any vector $v$ and any symmetric matrix $W$ of appropriate size, we define $\|v\|_W^2=v^\top Wv$. Note this definition does not require positive semidefiniteness of $W$.

\begin{lemma}
For any two vectors $\vu, \vv$ and a symmetric matrix $\vW$, we have
\begin{equation}\label{uv-cross}
2\vu^\top \vW \vv = \|\vu\|_\vW^2+\|\vv\|_\vW^2-\|\vu-\vv\|_\vW^2.
\end{equation}
\end{lemma}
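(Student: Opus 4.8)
The identity is a polarization-type computation, so the plan is simply to expand the quadratic form appearing on the right-hand side using the definition $\|x\|_\vW^2 = x^\top \vW x$ and to collect terms. First I would expand the last term,
\[
\|\vu - \vv\|_\vW^2 = (\vu-\vv)^\top \vW (\vu-\vv) = \vu^\top \vW \vu - \vu^\top \vW \vv - \vv^\top \vW \vu + \vv^\top \vW \vv,
\]
which already exhibits all four products that must be reconciled with the three norm terms in the statement.

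The only place where the hypothesis is genuinely used is in merging the two cross terms $\vu^\top \vW \vv$ and $\vv^\top \vW \vu$. Since each is a scalar, it equals its own transpose, so $\vv^\top \vW \vu = (\vv^\top \vW \vu)^\top = \vu^\top \vW^\top \vv$; symmetry of $\vW$ then gives $\vW^\top = \vW$ and hence $\vv^\top \vW \vu = \vu^\top \vW \vv$. This is what allows the two mixed products to combine into $-2\,\vu^\top \vW \vv$. Substituting back and recognizing $\vu^\top \vW \vu = \|\vu\|_\vW^2$ and $\vv^\top \vW \vv = \|\vv\|_\vW^2$, I obtain
\[
\|\vu-\vv\|_\vW^2 = \|\vu\|_\vW^2 + \|\vv\|_\vW^2 - 2\,\vu^\top \vW \vv,
\]
and a trivial rearrangement yields \eqref{uv-cross}.

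There is no real obstacle here: the computation is elementary and, as the remark preceding the lemma stresses, it uses no positive-semidefiniteness of $\vW$, only symmetry. The one point deserving a moment's care is precisely the symmetry step above, without which the cross terms would not collapse and the factor of $2$ would fail to appear; apart from that, the argument is a direct expansion.
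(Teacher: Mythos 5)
Your proof is correct; the paper states this lemma without proof, and your direct expansion of $\|\vu-\vv\|_\vW^2$ together with the use of symmetry to merge the cross terms is exactly the intended elementary argument.
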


\begin{lemma}\label{lem:x-rate}
Given a function $\phi$ and a fixed point $\tilde{x}$, if for any $\lambda$, it holds that
\begin{equation}\label{eq:x-F}F(\tilde{x})-F(x^*)-\langle\lambda, A\tilde{x}-b\rangle\le \phi(\lambda),
\end{equation}
then for any $\rho>0$, we have
\begin{equation}\label{eq:sup-F} 
F(\tilde{x})-F(x^*)+\rho \|A\tilde{x}-b\|\le \sup_{\|\lambda\|\le \rho}\phi(\lambda).
\end{equation}
\end{lemma}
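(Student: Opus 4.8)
The plan is to exploit the dual (variational) characterization of the Euclidean norm, $\rho\|v\| = \sup_{\|\lambda\|\le\rho}\langle -\lambda, v\rangle$, and then take a supremum over the ball $\{\lambda:\|\lambda\|\le\rho\}$ on both sides of the hypothesis \eqref{eq:x-F}. The point is that the feasibility residual $\rho\|A\tilde{x}-b\|$ appearing in the desired conclusion \eqref{eq:sup-F} is itself a maximum of the linear functionals $-\langle\lambda, A\tilde{x}-b\rangle$ over this ball, so that maximizing the left-hand side of \eqref{eq:x-F} manufactures exactly the term we want while the right-hand side only increases to its supremum.

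First I would record the identity
\[
\sup_{\|\lambda\|\le\rho}\langle -\lambda, A\tilde{x}-b\rangle = \rho\|A\tilde{x}-b\|,
\]
which holds because the Euclidean norm is self-dual and the ball $\|\lambda\|\le\rho$ is symmetric about the origin; the supremum is attained at $\lambda=-\rho(A\tilde{x}-b)/\|A\tilde{x}-b\|$ when $A\tilde{x}-b\neq 0$, and trivially otherwise. Next, for each fixed $\lambda$ with $\|\lambda\|\le\rho$, the assumption \eqref{eq:x-F} gives
\[
F(\tilde{x})-F(x^*)-\langle\lambda, A\tilde{x}-b\rangle \le \phi(\lambda) \le \sup_{\|\lambda'\|\le\rho}\phi(\lambda').
\]
Since the rightmost quantity is a constant independent of $\lambda$, I would take the supremum over $\|\lambda\|\le\rho$ on the far left. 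Because $F(\tilde{x})-F(x^*)$ does not depend on $\lambda$, that supremum separates and, by the identity above, equals $F(\tilde{x})-F(x^*)+\rho\|A\tilde{x}-b\|$, which is precisely the left-hand side of \eqref{eq:sup-F}.

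There is essentially no technical obstacle: the only point requiring care is the sign in the dual characterization (the $-\lambda$ versus $\lambda$), which is harmless because the constraint set is symmetric. The conceptual role of the lemma, which is worth emphasizing in the write-up, is that it upgrades a family of bounds valid for every individual multiplier $\lambda$ into a single bound that simultaneously controls both the objective gap $F(\tilde{x})-F(x^*)$ and the constraint violation $\|A\tilde{x}-b\|$; the free parameter $\rho$ is then available to be tuned in the main convergence theorems so as to balance these two quantities and extract the claimed $O(1/t)$ or $O(1/t^2)$ rates.
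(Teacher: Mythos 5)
Your proof is correct and is essentially the paper's own argument: the paper simply plugs in $\lambda=-\rho(A\tilde{x}-b)/\|A\tilde{x}-b\|$ (handling $A\tilde{x}=b$ trivially), which is exactly the maximizer you identify via the self-duality of the Euclidean norm. Taking the supremum over the ball versus substituting its attaining point is a cosmetic difference only.
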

This lemma can be found in \cite{GXZ-RPDCU2016}. Here we provide a simple proof.
\begin{proof}
If $A\tilde{x}=b$, then it is trivial to have \eqref{eq:sup-F} from \eqref{eq:x-F}. Otherwise, let $\lambda=-\frac{\rho(A\tilde{x}-b)}{\|A\tilde{x}-b\|}$ in both sides of \eqref{eq:x-F} and the result follows by noting 
$$\phi\left(-\frac{\rho(A\tilde{x}-b)}{\|A\tilde{x}-b\|}\right)\le \sup_{\|\lambda\|\le \rho}\phi(\lambda).$$
\end{proof}

\begin{lemma} \label{lem:equiv-rate}
For any $\epsilon\ge0$, if
\begin{equation}\label{eq:ineq-eps}
F(\tilde{x})-F(x^*)+\rho\|A\tilde{x}-b\|\le \epsilon,
\end{equation}
then we have
\begin{equation}\label{eq:ineq-rate}
\|A\tilde{x}-b\| \leq \frac{\epsilon}{\rho-\|\lambda^*\|} \mbox{ and }  -\frac{\|\lambda^*\|\epsilon}{\rho-\|\lambda^*\|}\le F(\tilde{x})-F(x^*) \leq \epsilon,
\end{equation}
where $(x^*,\lambda^*)$ satisfies the KKT conditions in \eqref{eq:kkt-lalm}, and we assume $\|\lambda^*\|< \rho$.
\end{lemma}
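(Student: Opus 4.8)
The plan is to combine the hypothesis \eqref{eq:ineq-eps} with the optimality inequality \eqref{eq:opt-lalm}, which holds for the KKT pair $(x^*,\lambda^*)$ at every point, and in particular at $x=\tilde{x}$. Instantiating \eqref{eq:opt-lalm} at $\tilde{x}$ gives $F(\tilde{x})-F(x^*)\ge\langle\lambda^*,A\tilde{x}-b\rangle$, and applying the Cauchy--Schwarz inequality to the right-hand side yields the lower bound
\[
F(\tilde{x})-F(x^*)\ge -\|\lambda^*\|\,\|A\tilde{x}-b\|.
\]
This single inequality, together with \eqref{eq:ineq-eps}, is the only genuine input; everything else is elementary rearrangement.

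First I would derive the feasibility estimate. Rewriting \eqref{eq:ineq-eps} as $F(\tilde{x})-F(x^*)\le\epsilon-\rho\|A\tilde{x}-b\|$ and chaining it with the lower bound above produces $-\|\lambda^*\|\,\|A\tilde{x}-b\|\le\epsilon-\rho\|A\tilde{x}-b\|$, i.e.\ $(\rho-\|\lambda^*\|)\|A\tilde{x}-b\|\le\epsilon$. Since the statement assumes $\|\lambda^*\|<\rho$, the coefficient $\rho-\|\lambda^*\|$ is strictly positive, so dividing through preserves the inequality and gives the first claimed bound $\|A\tilde{x}-b\|\le\epsilon/(\rho-\|\lambda^*\|)$.

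Next I would obtain the two-sided bound on the objective gap. The upper bound $F(\tilde{x})-F(x^*)\le\epsilon$ is immediate from \eqref{eq:ineq-eps}, since $\rho\|A\tilde{x}-b\|\ge0$. For the lower bound I would feed the feasibility estimate just derived back into $F(\tilde{x})-F(x^*)\ge -\|\lambda^*\|\,\|A\tilde{x}-b\|$, which yields $F(\tilde{x})-F(x^*)\ge -\|\lambda^*\|\epsilon/(\rho-\|\lambda^*\|)$, exactly the asserted lower estimate.

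There is no serious obstacle here; the proof is a short deterministic chain of three steps. The only point requiring care is the role of the standing assumption $\|\lambda^*\|<\rho$: it is precisely what makes $\rho-\|\lambda^*\|>0$, so that dividing by it keeps the inequality direction intact and all the resulting bounds stay finite. Were $\rho\le\|\lambda^*\|$, the division step would fail, which is exactly why this hypothesis is recorded explicitly in the statement.
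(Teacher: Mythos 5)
Your proposal is correct and is essentially identical to the paper's own proof: both arguments rest on the single inequality $F(\tilde{x})-F(x^*)\ge -\|\lambda^*\|\,\|A\tilde{x}-b\|$ obtained from \eqref{eq:opt-lalm} via Cauchy--Schwarz, combined with \eqref{eq:ineq-eps} and the assumption $\|\lambda^*\|<\rho$ to isolate $\|A\tilde{x}-b\|$, after which the two objective bounds follow immediately. Your write-up simply spells out the ``follow immediately'' steps that the paper leaves to the reader.
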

\begin{proof}
From \eqref{eq:opt-lalm}, we have
$$F(\tilde{x})-F(x^*)\ge -\|\lambda^*\|\cdot\|A\tilde{x}-b\|,$$
which together with \eqref{eq:ineq-eps} implies the first inequality in \eqref{eq:ineq-rate}. The other two inequalities follow immediately.
\end{proof}

\subsection{Analysis of the accelerated linearized ALM}
In this subsection, we show the convergence of Algorithm \ref{alg:alalm} under the following assumptions.
\begin{assumption}\label{lalm-assump1}
There exists a point $(x^*,\lambda^*)$ satisfying the KKT conditions in \eqref{eq:kkt-lalm}.
\end{assumption}

\begin{assumption}\label{lalm-assump2}
The function $f$ has Lipschitz continuous gradient with constant $L_f$, i.e.,
\begin{equation}\label{eq:lip-gradf}\|\nabla f(x)-\nabla f(\tilde{x})\|\le L_f\|x-\tilde{x}\|,\,\forall x, \tilde{x}.
\end{equation}
\end{assumption}
The inequality in \eqref{eq:lip-gradf} implies that
\begin{equation}\label{eq:lip-ineq}
f(\tilde{x})\le f(x)+\langle\nabla f(x), \tilde{x}-x\rangle+\frac{L_f}{2}\|\tilde{x}-x\|^2,\,\forall x, \tilde{x}.
\end{equation}

We first establish a result of running one iteration of Algorithm \ref{alg:alalm}. The proof follows that in \cite{ouyang2015accelerated}.
\begin{lemma}[One-iteration result]\label{thm:lalm-1step}
Let $\{(x^k,\bar{x}^k,\lambda^k)\}_{k\ge1}$ be the sequence generated from Algorithm \ref{alg:alalm} with $0\le\alpha_k\le 1,\,\forall k$. Then for any $(x,\lambda)$ such that $Ax=b$, we have
\begin{align}\label{eq:lalm-1step}
&\big[F(\bar{x}^{k+1})-F(x)-\langle\lambda, A\bar{x}^{k+1}-b\rangle\big]-(1-\alpha_k)\big[F(\bar{x}^k)-F(x)-\langle\lambda, A\bar{x}^k-b\rangle\big]\cr
\le & -\frac{\alpha_k}{2}\big[\|x^{k+1}-x\|_{P^k}^2-\|x^k-x\|_{P^k}^2+\|x^{k+1}-x^k\|_{P^k}^2\big]+\frac{\alpha_k^2 L_f}{2}\|x^{k+1}-x^k\|^2\cr
&  +\frac{\alpha_k}{2\gamma_k}\big[\|\lambda^k-\lambda\|^2-\|\lambda^{k+1}-\lambda\|^2+\|\lambda^{k+1}-\lambda^k\|^2\big]-\frac{\alpha_k\beta_k}{\gamma_k^2}\|\lambda^{k+1}-\lambda^k\|^2,
\end{align}
where $F$ is given in \eqref{eq:obj-lalm}.
\end{lemma}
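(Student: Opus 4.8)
The plan is to derive the one-iteration estimate \eqref{eq:lalm-1step} by combining the first-order optimality condition of the subproblem \eqref{eq:new-x} with the convexity and smoothness of $f$, all organized around the three point sequences $\hat{x}^k$, $x^{k+1}$, $\bar{x}^{k+1}$. First I would record the variational inequality satisfied by the minimizer $x^{k+1}$: there exists a subgradient $s\in\partial g(x^{k+1})$ with $\nabla f(\hat{x}^k)-A^\top\lambda^k+\beta_k A^\top(Ax^{k+1}-b)+P^k(x^{k+1}-x^k)+s=0$, which by convexity of $g$ gives, for every $x$,
\[
g(x^{k+1})-g(x)\le \langle \nabla f(\hat{x}^k)-A^\top\lambda^k+\beta_k A^\top(Ax^{k+1}-b)+P^k(x^{k+1}-x^k),\, x-x^{k+1}\rangle .
\]
This is the only place where the algorithm's update enters; everything else is bookkeeping.

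Next I would build the objective gap at the averaged iterate $\bar{x}^{k+1}$. The crucial observations are the two affine identities $\bar{x}^{k+1}-\hat{x}^k=\alpha_k(x^{k+1}-x^k)$ and $(1-\alpha_k)\bar{x}^k+\alpha_k x-\hat{x}^k=\alpha_k(x-x^k)$, both immediate from \eqref{eq:new-xhat} and \eqref{eq:new-xbar}. Applying the descent inequality \eqref{eq:lip-ineq} to $f$ at $\bar{x}^{k+1}$ with base point $\hat{x}^k$ produces exactly the term $\frac{\alpha_k^2 L_f}{2}\|x^{k+1}-x^k\|^2$. Then I would bound $g(\bar{x}^{k+1})\le(1-\alpha_k)g(\bar{x}^k)+\alpha_k g(x^{k+1})$ by convexity (using $0\le\alpha_k\le 1$), and bound $f(\hat{x}^k)$ above by $(1-\alpha_k)f(\bar{x}^k)+\alpha_k f(x)$ plus a gradient correction evaluated via the second identity. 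Adding these and substituting the optimality inequality from the first step, the two $\langle\nabla f(\hat{x}^k),\cdot\rangle$ contributions cancel, leaving
\[
F(\bar{x}^{k+1})-F(x)-(1-\alpha_k)\big[F(\bar{x}^k)-F(x)\big]\le \alpha_k\langle -A^\top\lambda^k+\beta_k A^\top(Ax^{k+1}-b)+P^k(x^{k+1}-x^k),\, x-x^{k+1}\rangle+\tfrac{\alpha_k^2 L_f}{2}\|x^{k+1}-x^k\|^2 .
\]

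It then remains to reintroduce the multiplier and convert the right-hand side into telescoping squared norms. Since $Ax=b$ and $A\bar{x}^{k+1}-b=(1-\alpha_k)(A\bar{x}^k-b)+\alpha_k(Ax^{k+1}-b)$, the dual part of the claimed left-hand side collapses to $-\alpha_k\langle\lambda,Ax^{k+1}-b\rangle$; using $Ax=b$ again turns $\alpha_k\langle -A^\top\lambda^k,x-x^{k+1}\rangle$ into $\alpha_k\langle\lambda^k,Ax^{k+1}-b\rangle$ and the $\beta_k$ term into $-\alpha_k\beta_k\|Ax^{k+1}-b\|^2$, so the residual terms merge to $\alpha_k\langle\lambda^k-\lambda,Ax^{k+1}-b\rangle-\alpha_k\beta_k\|Ax^{k+1}-b\|^2$. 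Finally I would invoke the dual update \eqref{eq:new-lambda} in the form $Ax^{k+1}-b=\tfrac{1}{\gamma_k}(\lambda^k-\lambda^{k+1})$ and apply the polarization identity \eqref{uv-cross} to both the $\lambda$ cross term (with $\vu=\lambda^k-\lambda$, $\vv=\lambda^k-\lambda^{k+1}$) and the $P^k$ cross term, in the three-point form $\langle P^k(x^{k+1}-x^k),x-x^{k+1}\rangle=\tfrac12[\|x^k-x\|_{P^k}^2-\|x^{k+1}-x^k\|_{P^k}^2-\|x^{k+1}-x\|_{P^k}^2]$, which reproduce the three bracketed expressions in \eqref{eq:lalm-1step} verbatim.

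The main obstacle is not any single estimate but the careful bookkeeping across the three sequences: one must apply the descent lemma at $\bar{x}^{k+1}$ rather than at $x^{k+1}$ so that the Lipschitz term carries the factor $\alpha_k^2$, and one must split the convexity of $f$ across the two base points $\bar{x}^k$ and $x$ with weights $1-\alpha_k$ and $\alpha_k$ so that the gradient terms line up with, and cancel against, those from the subproblem optimality condition. Getting these weights and the two affine identities for $\hat{x}^k$ and $\bar{x}^{k+1}$ exactly right is precisely what makes the $\langle\nabla f(\hat{x}^k),\cdot\rangle$ terms vanish and leaves a clean telescoping inequality.
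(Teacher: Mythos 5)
Your proposal is correct and follows essentially the same route as the paper's proof: the descent lemma applied to $f$ at $\bar{x}^{k+1}$ with base point $\hat{x}^k$ (yielding the $\alpha_k^2 L_f$ factor), the convexity splitting of $f$ and $g$ across $\bar{x}^k$ and $x$ with weights $1-\alpha_k$ and $\alpha_k$, the subproblem optimality condition paired with the dual update $Ax^{k+1}-b=\frac{1}{\gamma_k}(\lambda^k-\lambda^{k+1})$, and the polarization identity \eqref{uv-cross} to produce the telescoping squared norms. The only difference is presentational (you start from the variational inequality rather than the descent lemma), and all the key cancellations you describe match the paper's \eqref{eq:bd1}--\eqref{eq:bd3} exactly.
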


\begin{proof}
From \eqref{eq:lip-ineq}, it follows that
$$f(\bar{x}^{k+1})\le  f(\hat{x}^k)+\langle \nabla f(\hat{x}^k), \bar{x}^{k+1}-\hat{x}^k\rangle +\frac{L_f}{2}\|\bar{x}^{k+1}-\hat{x}^k\|^2.$$
Substituting $\bar{x}^{k+1}=(1-\alpha_k)\bar{x}^k+\alpha_k x^{k+1}$ and also noting $\bar{x}^{k+1}-\hat{x}^k=\alpha_k(x^{k+1}-x^k)$, we have from the above inequality that
\begin{align}\label{eq:bd1}
f(\bar{x}^{k+1})\le & f(\hat{x}^k)+(1-\alpha_k)\langle \nabla f(\hat{x}^k), \bar{x}^k-\hat{x}^k\rangle +\alpha_k\langle \nabla f(\hat{x}^k), x^{k+1}-\hat{x}^k\rangle+\frac{\alpha_k^2 L_f}{2}\|x^{k+1}-x^k\|^2\cr
=&(1-\alpha_k)\big[f(\hat{x}^k)+\langle \nabla f(\hat{x}^k), \bar{x}^k-\hat{x}^k\rangle\big]+\alpha_k\big[f(\hat{x}^k)+\langle \nabla f(\hat{x}^k), x-\hat{x}^k\rangle\big]\cr
&+\alpha_k\langle \nabla f(\hat{x}^k), x^{k+1}-x\rangle+\frac{\alpha_k^2 L_f}{2}\|x^{k+1}-x^k\|^2\cr
\le &(1-\alpha_k) f(\bar{x}^k)+\alpha_k f(x) +\alpha_k\langle \nabla f(\hat{x}^k), x^{k+1}-x\rangle+\frac{\alpha_k^2 L_f}{2}\|x^{k+1}-x^k\|^2,
\end{align}
where the second inequality follows from the convexity of $f$.
Hence,
\begin{align}\label{eq:bd2}
&\big[F(\bar{x}^{k+1})-F(x)-\langle\lambda, A\bar{x}^{k+1}-b\rangle\big]-(1-\alpha_k)\big[F(\bar{x}^k)-F(x)-\langle\lambda, A\bar{x}^k-b\rangle\big]\cr
=&\big[f(\bar{x}^{k+1})-(1-\alpha_k)f(\bar{x}^k)-\alpha_k f(x)\big]+\big[g(\bar{x}^{k+1})-(1-\alpha_k)g(\bar{x}^k)-\alpha_k g(x)\big]-\alpha_k\langle\lambda, Ax^{k+1}-b\rangle\cr
\le & \alpha_k\langle\nabla f(\hat{x}^k), x^{k+1}-x\rangle+\frac{\alpha_k^2 L_f}{2}\|x^{k+1}-x^k\|^2+\alpha_k[g(x^{k+1})-g(x)]-\alpha_k\langle\lambda, Ax^{k+1}-b\rangle,\end{align}
where the equality follows from the fact $\bar{x}^{k+1}=(1-\alpha_k)\bar{x}^k+\alpha_k x^{k+1}$, and in the inequality, we have used \eqref{eq:bd1} and the convexity of $g$.

On the other hand, from the update rule of $x^{k+1}$, we have the optimality condition:
$$0= \nabla f(\hat{x}^k)+\tilde{\nabla} g(x^{k+1})-A^\top \lambda^k +\beta_k A^\top(Ax^{k+1}-b)+P^k(x^{k+1}-x^k),$$
where $\tilde{\nabla} g(x^{k+1})$ is a subgradient of $g$ at $x^{k+1}$. 
Hence, for any $x$ such that $Ax=b$, it holds
\begin{align}\label{eq:bd3}
0=&\big\langle x^{k+1}-x, \nabla f(\hat{x}^k)+\tilde{\nabla} g(x^{k+1})-A^\top \lambda^k +\beta_k A^\top(Ax^{k+1}-b)+P^k(x^{k+1}-x^k)\big\rangle\cr
\ge & \big\langle x^{k+1}-x, \nabla f(\hat{x}^k)-A^\top \lambda^k +\beta_k A^\top(Ax^{k+1}-b)+P^k(x^{k+1}-x^k)\big\rangle +g(x^{k+1})-g(x)\cr
= &\left\langle x^{k+1}-x, \nabla f(\hat{x}^k)-A^\top \lambda^k+\frac{\beta_k}{\gamma_k}A^\top(\lambda^k-\lambda^{k+1})+P^k(x^{k+1}-x^k)\right\rangle +g(x^{k+1})-g(x)\cr
= &\big\langle x^{k+1}-x, \nabla f(\hat{x}^k)\big\rangle +g(x^{k+1})-g(x)+\big\langle x^{k+1}-x,P^k(x^{k+1}-x^k)\big\rangle\cr
& +\left\langle A(x^{k+1}-x), - \lambda^k+\frac{\beta_k}{\gamma_k}(\lambda^k-\lambda^{k+1})\right\rangle\cr
=&\big\langle x^{k+1}-x, \nabla f(\hat{x}^k)\big\rangle +g(x^{k+1})-g(x)+\big\langle x^{k+1}-x,P^k(x^{k+1}-x^k)\big\rangle\cr
& +\left\langle Ax^{k+1}-b, \lambda- \lambda^k+\frac{\beta_k}{\gamma_k}(\lambda^k-\lambda^{k+1})\right\rangle-\langle\lambda, Ax^{k+1}-b\rangle\cr
=&\big\langle x^{k+1}-x, \nabla f(\hat{x}^k)\big\rangle +g(x^{k+1})-g(x)-\langle\lambda, Ax^{k+1}-b\rangle+\big\langle x^{k+1}-x,P^k(x^{k+1}-x^k)\big\rangle\cr
& +\left\langle \frac{1}{\gamma_k}(\lambda^k-\lambda^{k+1}), \lambda- \lambda^k+\frac{\beta_k}{\gamma_k}(\lambda^k-\lambda^{k+1})\right\rangle
\end{align}
where the inequality follows from the convexity of $g$. 

Combining \eqref{eq:bd2} and \eqref{eq:bd3} together gives
\begin{align*}
&\big[F(\bar{x}^{k+1})-F(x)-\langle\lambda, A\bar{x}^{k+1}-b\rangle\big]-(1-\alpha_k)\big[F(\bar{x}^k)-F(x)-\langle\lambda, A\bar{x}^k-b\rangle\big]\cr
\le & \frac{\alpha_k^2 L_f}{2}\|x^{k+1}-x^k\|^2-\alpha_k\big\langle x^{k+1}-x,P^k(x^{k+1}-x^k)\big\rangle\\
&-\alpha_k\left\langle \frac{1}{\gamma_k}(\lambda^k-\lambda^{k+1}), \lambda- \lambda^k+\frac{\beta_k}{\gamma_k}(\lambda^k-\lambda^{k+1})\right\rangle.
\end{align*}
Now apply \eqref{uv-cross} to complete the proof.
\end{proof}

Below, we specify the values of the parameters $\alpha_k, \beta_k, \gamma_k$ and $P^k$ and establish the convergence rate of Algorithm \ref{alg:alalm} through \eqref{eq:lalm-1step}.

\subsubsection{Constant parameters}
In this subsection, we fix the parameters $\alpha_k, \beta_k, \gamma_k$ and $P^k$ during all the iterations and show $O(1/t)$ convergence of Algorithm \ref{alg:alalm}. The result is summarized in the following theorem. Note that this result is not totally new. Similar result is indicated by several previous works; see \cite{gao2015first, GXZ-RPDCU2016} for example. However, this special case seems to be overlooked in the literature. In addition, we notice that our result allows more flexible relation between $\beta$ and $\gamma$. Previous works usually assume $\beta=\gamma$ because they consider problems with at least two block variables.
\begin{theorem}\label{thm:alalm-1-t}
Under Assumptions \ref{lalm-assump1} and \ref{lalm-assump2}, let $\{(x^k,\bar{x}^k,\lambda^k)\}_{k\ge1}$ be the sequence generated from Algorithm \ref{alg:alalm} with parameters set to
\begin{equation}\label{eq:alalm-const-para}
\forall k:\, \alpha_k=1,\, \beta_k=\beta>0,\, \gamma_k=\gamma\in(0,2\beta),\,P_k=P\succ L_f I.
\end{equation}
Then $\bar{x}^k=x^k,\,\forall k$, and $\{(x^k,\lambda^k)\}_{k\ge 1}$ is bounded and converges to a point $(x^\infty,\lambda^\infty)$ that satisfies the KKT conditions in \eqref{eq:kkt-lalm}.
In addition, 
\begin{subequations}\label{eq:alalm-const-rate}
\begin{align}
&|F(\tilde{x}^{t+1})-F(x^*)|\le  \frac{1}{t}\left(\frac{1}{2}\|x^1-x^*\|_P^2+\frac{2\|\lambda^*\|^2}{\gamma}\right),\\
&\|A\tilde{x}^{t+1}-b\|\le  \frac{1}{t\|\lambda^*\|}\left(\frac{1}{2}\|x^1-x^*\|_P^2+\frac{2\|\lambda^*\|^2}{\gamma}\right),
\end{align}
\end{subequations}
where $(x^*,\lambda^*)$ is any point satisfying the KKT conditions in \eqref{eq:kkt-lalm}, and
$$\tilde{x}^{t+1}=\frac{\sum_{k=1}^t x^{k+1}}{t}.$$
\end{theorem}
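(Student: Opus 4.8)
The plan is to specialize the one-iteration inequality \eqref{eq:lalm-1step} of Lemma \ref{thm:lalm-1step} to the constant parameters \eqref{eq:alalm-const-para} and then extract both convergence and the rate from it. First I would observe that $\alpha_k=1$ collapses the three primal sequences: \eqref{eq:new-xhat} gives $\hat{x}^k=x^k$ and \eqref{eq:new-xbar} gives $\bar{x}^{k+1}=x^{k+1}$, so with $\bar{x}^1=x^1$ we get $\bar{x}^k=x^k$ for all $k$. Substituting $\alpha_k=1$, $\beta_k=\beta$, $\gamma_k=\gamma$, $P^k=P$ into \eqref{eq:lalm-1step} and dropping the vanishing $(1-\alpha_k)$ term, the inequality becomes, for any $(x,\lambda)$ with $Ax=b$,
\begin{align*}
F(x^{k+1})-F(x)-\langle\lambda, Ax^{k+1}-b\rangle
\le&\ \tfrac12\big[\|x^k-x\|_P^2-\|x^{k+1}-x\|_P^2\big]-\tfrac12\|x^{k+1}-x^k\|_{P-L_fI}^2\\
&+\tfrac1{2\gamma}\big[\|\lambda^k-\lambda\|^2-\|\lambda^{k+1}-\lambda\|^2\big]-\tfrac{2\beta-\gamma}{2\gamma^2}\|\lambda^{k+1}-\lambda^k\|^2,
\end{align*}
where I combined $\tfrac{L_f}{2}\|x^{k+1}-x^k\|^2-\tfrac12\|x^{k+1}-x^k\|_P^2=-\tfrac12\|x^{k+1}-x^k\|_{P-L_fI}^2$ and $\tfrac1{2\gamma}\|\lambda^{k+1}-\lambda^k\|^2-\tfrac{\beta}{\gamma^2}\|\lambda^{k+1}-\lambda^k\|^2=-\tfrac{2\beta-\gamma}{2\gamma^2}\|\lambda^{k+1}-\lambda^k\|^2$. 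The hypotheses $P\succ L_fI$ and $\gamma\in(0,2\beta)$ make both quadratic remainders nonpositive, which is exactly why these parameter restrictions are imposed.

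For the convergence claim I would take $(x,\lambda)=(x^*,\lambda^*)$ a KKT pair; by \eqref{eq:opt-lalm} the left side is nonnegative, so with the Lyapunov quantity $V_k=\tfrac12\|x^k-x^*\|_P^2+\tfrac1{2\gamma}\|\lambda^k-\lambda^*\|^2$ the inequality rearranges to
$$V_{k+1}+\tfrac12\|x^{k+1}-x^k\|_{P-L_fI}^2+\tfrac{2\beta-\gamma}{2\gamma^2}\|\lambda^{k+1}-\lambda^k\|^2\le V_k.$$
Hence $\{V_k\}$ is nonincreasing, giving boundedness of $(x^k,\lambda^k)$, and summing yields $\sum_k\big(\|x^{k+1}-x^k\|^2+\|\lambda^{k+1}-\lambda^k\|^2\big)<\infty$, so the successive differences vanish. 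Extracting a subsequence $(x^{k_j},\lambda^{k_j})\to(x^\infty,\lambda^\infty)$, I would pass to the limit in the first-order optimality condition for the $x$-update \eqref{eq:new-x} (using $\hat x^k=x^k$ and $Ax^{k+1}-b=\tfrac1\gamma(\lambda^k-\lambda^{k+1})$ from \eqref{eq:new-lambda}): continuity of $\nabla f$, vanishing of $P(x^{k+1}-x^k)$ and of $A^\top(\lambda^k-\lambda^{k+1})$, together with closedness of the graph of $\partial g$, give $0\in\partial F(x^\infty)-A^\top\lambda^\infty$, while $Ax^{k+1}-b\to0$ gives $Ax^\infty=b$. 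Re-running the Lyapunov argument with $(x^*,\lambda^*)$ replaced by this particular limit makes the corresponding $V_k$ convergent, and since it vanishes along the subsequence it vanishes entirely, delivering full convergence of $(x^k,\lambda^k)$ to $(x^\infty,\lambda^\infty)$.

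For the rate I would sum the specialized inequality over $k=1,\dots,t$ with $x=x^*$: the $P$- and $\lambda$-telescopes collapse, the negative remainders are discarded, and $\lambda^1=0$ is used. Dividing by $t$ and applying convexity of $F$ (Jensen) at $\tilde x^{t+1}=\tfrac1t\sum_{k=1}^t x^{k+1}$ gives, for every $\lambda$,
$$F(\tilde x^{t+1})-F(x^*)-\langle\lambda,A\tilde x^{t+1}-b\rangle\le\frac1t\Big(\tfrac12\|x^1-x^*\|_P^2+\tfrac1{2\gamma}\|\lambda\|^2\Big)=:\phi(\lambda).$$
Invoking Lemma \ref{lem:x-rate} with $\rho=2\|\lambda^*\|$ and noting $\sup_{\|\lambda\|\le 2\|\lambda^*\|}\phi(\lambda)=\tfrac1t\big(\tfrac12\|x^1-x^*\|_P^2+\tfrac{2\|\lambda^*\|^2}{\gamma}\big)$ produces precisely the constant in \eqref{eq:alalm-const-rate}; feeding this $\epsilon$ and $\rho$ into Lemma \ref{lem:equiv-rate}, where $\rho-\|\lambda^*\|=\|\lambda^*\|$, yields both claimed estimates at once. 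The only genuinely delicate step is the limit argument verifying the KKT conditions—specifically the appeal to graph closedness (outer semicontinuity) of $\partial g$ to preserve the subgradient inclusion in the limit; the remainder is telescoping and bookkeeping.
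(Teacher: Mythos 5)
Your proposal is correct and follows essentially the same route as the paper's proof: specializing the one-iteration inequality \eqref{eq:lalm-1step} to constant parameters, running the Lyapunov/monotonicity argument at a KKT pair for boundedness and convergence (with the same re-application at the limit point to upgrade subsequential to full convergence), and then telescoping plus Lemmas \ref{lem:x-rate} and \ref{lem:equiv-rate} with $\rho=2\|\lambda^*\|$ for the rate. The only cosmetic difference is that you verify the limiting KKT inclusion via outer semicontinuity of $\partial g$ whereas the paper passes to the limit in the argmin characterization of \eqref{eq:new-x}; these are equivalent.
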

\begin{proof}
It is trivial to have $\bar{x}^k=\hat{x}^k=x^k$ from \eqref{eq:new-xhat} and \eqref{eq:new-xbar} as $\alpha_k=1,\,\forall k$. With the parameters given in \eqref{eq:alalm-const-para} and $x=x^*$, the inequality in \eqref{eq:lalm-1step} reduces to
\begin{align}\label{alalm-const-bd1}
&F(x^{k+1})-F(x^*)-\langle\lambda, Ax^{k+1}-b\rangle\cr
\le & -\frac{1}{2}\big[\|x^{k+1}-x^*\|_P^2-\|x^k-x^*\|_P^2+\|x^{k+1}-x^k\|_P^2\big]+\frac{L_f}{2}\|x^{k+1}-x^k\|^2\cr
&+\frac{1}{2\gamma}\big[\|\lambda^k-\lambda\|^2-\|\lambda^{k+1}-\lambda\|^2+\|\lambda^{k+1}-\lambda^k\|^2\big]-\frac{\beta}{\gamma^2}\|\lambda^{k+1}-\lambda^k\|^2
\end{align}
Let $\lambda=\lambda^*$ in the above inequality, and from \eqref{eq:opt-lalm}, we have
\begin{align}\label{alalm-const-bd2}
&\|x^{k+1}-x^*\|_P^2+\|x^{k+1}-x^k\|_{P-L_f I}^2+\frac{1}{\gamma}\|\lambda^{k+1}-\lambda^*\|^2+\frac{1}{\gamma}\left(\frac{2\beta}{\gamma}-1\right)\|\lambda^{k+1}-\lambda^k\|^2\cr
\le&\|x^k-x^*\|_P^2+\frac{1}{\gamma}\|\lambda^k-\lambda^*\|^2.
\end{align}
Since $P\succ L_f I$ and $\gamma<2\beta$, \eqref{alalm-const-bd2} implies the nonincreasing monotonicity of $\{\|x^k-x^*\|_P^2+\frac{1}{\gamma}\|\lambda^k-\lambda^*\|^2\}$, and thus $\{(x^k,\lambda^k)\}_{k\ge 1}$ must be bounded. Summing \eqref{alalm-const-bd2} from $k=1$ to $\infty$ gives
\begin{equation*}\sum_{k=1}^\infty \left(\|x^{k+1}-x^k\|_{P-L_f I}^2 +\frac{1}{\gamma}\big(\frac{2\beta}{\gamma}-1\big)\|\lambda^{k+1}-\lambda^k\|^2\right) <\infty,
\end{equation*}
and thus 
\begin{equation}\label{eq:diff0}
\lim_{k\to\infty}(x^{k+1},\lambda^{k+1})- (x^k,\lambda^k)=0.
\end{equation}

Let $(x^\infty,\lambda^\infty)$ be a limit point of $\{(x^k,\lambda^k)\}_{k\ge1}$ and assume the subsequence $\{(x^k,\lambda^k)\}_{k\in\cK}$ converges to it. From $Ax^{k+1}-b=\frac{1}{\gamma}(\lambda^k-\lambda^{k+1})\to 0$ as $k\to\infty$, we conclude that
\begin{equation}\label{eq:alalm-feas}Ax^\infty-b=0.
\end{equation}
In addition, letting $\cK\ni k\to\infty$ in \eqref{eq:new-x} and using \eqref{eq:diff0} gives
$$x^\infty=\argmin_x \langle \nabla f(x^\infty)-A^\top \lambda^\infty, x\rangle +g(x)+\frac{\beta}{2}\|Ax-b\|^2+\frac{1}{2}\|x-x^\infty\|_{P}^2,$$
and thus we have the optimality condition
$$0\in \nabla f(x^\infty)+\partial g(x^\infty)-A^\top \lambda^\infty+\beta A^\top(Ax^\infty-b).$$
Together with \eqref{eq:alalm-feas} implies 
$$0\in \nabla f(x^\infty)+\partial g(x^\infty)-A^\top \lambda^\infty,$$
and thus $(x^\infty,\lambda^\infty)$ satisfies the KKT conditions in \eqref{eq:kkt-lalm}. Hence, \eqref{alalm-const-bd2} still holds if $(x^*,\lambda^*)$ is replaced by $(x^\infty,\lambda^\infty)$, and we have
$$\|x^{k+1}-x^\infty\|_P^2+\frac{1}{\gamma}\|\lambda^{k+1}-\lambda^\infty\|^2
\le\|x^k-x^\infty\|_P^2+\frac{1}{\gamma}\|\lambda^k-\lambda^\infty\|^2.$$
Since $(x^\infty,\lambda^\infty)$ is a limit point of $\{(x^k,\lambda^k)\}_{k\ge 1}$, the above inequality implies the convergence of $(x^k,\lambda^k)$ to $(x^\infty,\lambda^\infty)$.

To prove \eqref{eq:alalm-const-rate}, we sum up \eqref{alalm-const-bd1} from $k=1$ through $t$ and note $P\succ L_f I$ and $\gamma<2\beta$ to have 
\begin{align*}
\sum_{k=1}^t\big[F(x^{k+1})-F(x^*)-\langle\lambda, Ax^{k+1}-b\rangle\big]
\le  \frac{1}{2}\|x^1-x^*\|_P^2+\frac{1}{2\gamma}\|\lambda^1-\lambda\|^2,
\end{align*}
which together with the convexity of $F$ implies
\begin{align}\label{alalm-const-bd3}
F(\tilde{x}^{t+1})-F(x^*)-\langle\lambda, A\tilde{x}^{t+1}-b\rangle
\le  \frac{1}{2t}\|x^1-x^*\|_P^2+\frac{1}{2\gamma t}\|\lambda^1-\lambda\|^2.
\end{align}
Noting that $\lambda^1=0$ and $x^*$ is an arbitrary optimal solution, we therefore apply Lemmas \ref{lem:x-rate} and \ref{lem:equiv-rate} with $\rho=2\|\lambda^*\|$ to complete the proof. 
\end{proof}

\subsubsection{Adaptive parameters}
In this subsection, we let the parameters $\alpha_k,\beta_k,\gamma_k$ and $P^k$ be adaptive to the iteration number $k$ and improve the previously established $O(1/t)$ convergence rate to $O(1/t^2)$, which is optimal even without the linear constraint.

\begin{theorem}\label{thm:alalm-2-t}
Under Assumptions \ref{lalm-assump1} and \ref{lalm-assump2}, let $\{(x^k,\bar{x}^k,\lambda^k)\}_{k\ge1}$ be the sequence generated from Algorithm \ref{alg:alalm} with parameters set to
\begin{equation}\label{eq:alalm-adapt-para}
\forall k:\, \alpha_k=\frac{2}{k+1},\, \gamma_k=k\gamma,\, \beta_k\ge \frac{\gamma_k}{2},\, P^k=\frac{\eta}{k} I,
\end{equation}
where $\gamma>0$ and $\eta\ge 2L_f$. 
Then
\begin{subequations}\label{eq:alalm-adapt-rate}
\begin{align}
&|F(\bar{x}^{t+1})-F(x^*)|\le  \frac{1}{t(t+1)}\left(\eta\|x^1-x^*\|^2+\frac{4\|\lambda^*\|^2}{\gamma}\right),\\
&\|A\bar{x}^{t+1}-b\|\le  \frac{1}{t(t+1)\|\lambda^*\|}\left(\eta\|x^1-x^*\|^2+\frac{4\|\lambda^*\|^2}{\gamma}\right),
\end{align}
\end{subequations}
where $(x^*,\lambda^*)$ is any point satisfying the KKT conditions in \eqref{eq:kkt-lalm}.
\end{theorem}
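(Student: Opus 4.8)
The plan is to run the standard ``weighted telescoping'' argument on the one-iteration estimate \eqref{eq:lalm-1step} from Lemma \ref{thm:lalm-1step}. Write
$$
\delta_k(\lambda):=F(\bar{x}^k)-F(x^*)-\langle\lambda, A\bar{x}^k-b\rangle,
$$
so that \eqref{eq:lalm-1step} with $x=x^*$ reads $\delta_{k+1}(\lambda)-(1-\alpha_k)\delta_k(\lambda)\le R_k$, where $R_k$ is the right-hand side of \eqref{eq:lalm-1step}. To collapse the recursion I would multiply the $k$-th inequality by a weight $w_k$ chosen so that $w_k(1-\alpha_k)=w_{k-1}$; with $\alpha_k=\tfrac{2}{k+1}$ one has $1-\alpha_k=\tfrac{k-1}{k+1}$, and the choice $w_k=k(k+1)$ works, since $k(k+1)\cdot\tfrac{k-1}{k+1}=(k-1)k=w_{k-1}$. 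Because $\alpha_1=1$ kills the $\delta_1$ term, summing $w_k\delta_{k+1}(\lambda)\le w_{k-1}\delta_k(\lambda)+w_kR_k$ over $k=1,\dots,t$ telescopes to $w_t\,\delta_{t+1}(\lambda)\le\sum_{k=1}^t w_kR_k$.

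The reason this particular $w_k$ is the right one is that it simultaneously makes the two Bregman-type coefficients constant in $k$, which is exactly what the adaptive choices $P^k=\tfrac{\eta}{k}I$, $\gamma_k=k\gamma$ are engineered for. Indeed $w_k\alpha_kP^k=k(k+1)\cdot\tfrac{2}{k+1}\cdot\tfrac{\eta}{k}I=2\eta I$ and $\tfrac{w_k\alpha_k}{\gamma_k}=k(k+1)\cdot\tfrac{2}{k+1}\cdot\tfrac{1}{k\gamma}=\tfrac{2}{\gamma}$, both independent of $k$. Hence the weighted difference-of-squares terms $-\eta\big[\|x^{k+1}-x^*\|^2-\|x^k-x^*\|^2\big]$ and $\tfrac{1}{\gamma}\big[\|\lambda^k-\lambda\|^2-\|\lambda^{k+1}-\lambda\|^2\big]$ telescope cleanly, leaving $\eta\|x^1-x^*\|^2+\tfrac1\gamma\|\lambda^1-\lambda\|^2$ plus nonpositive end terms that I would simply discard.

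What remains is to check that the residual (non-telescoping) quadratic terms carry nonpositive coefficients, and this is where the two parameter conditions enter and is the only genuinely delicate bookkeeping. For the primal residual, combining $-\tfrac{w_k\alpha_k}{2}\|x^{k+1}-x^k\|^2_{P^k}=-\eta\|x^{k+1}-x^k\|^2$ with the Lipschitz term $\tfrac{w_k\alpha_k^2L_f}{2}\|x^{k+1}-x^k\|^2=\tfrac{2kL_f}{k+1}\|x^{k+1}-x^k\|^2\le 2L_f\|x^{k+1}-x^k\|^2$ gives a coefficient $\le -\eta+2L_f\le0$ by the hypothesis $\eta\ge 2L_f$. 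For the dual residual, $\tfrac{w_k\alpha_k}{2\gamma_k}\|\lambda^{k+1}-\lambda^k\|^2=\tfrac{1}{\gamma}\|\lambda^{k+1}-\lambda^k\|^2$ is cancelled by $-\tfrac{w_k\alpha_k\beta_k}{\gamma_k^2}\|\lambda^{k+1}-\lambda^k\|^2=-\tfrac{2}{\gamma}\tfrac{\beta_k}{\gamma_k}\|\lambda^{k+1}-\lambda^k\|^2\le-\tfrac{1}{\gamma}\|\lambda^{k+1}-\lambda^k\|^2$, using $\beta_k\ge\gamma_k/2$. Thus all leftover terms vanish or are negative, and I arrive at $w_t\,\delta_{t+1}(\lambda)\le \eta\|x^1-x^*\|^2+\tfrac1\gamma\|\lambda\|^2$ after using $\lambda^1=0$.

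Finally, dividing by $w_t=t(t+1)$ yields, for every $\lambda$,
$$
F(\bar{x}^{t+1})-F(x^*)-\langle\lambda, A\bar{x}^{t+1}-b\rangle\le \frac{1}{t(t+1)}\Big(\eta\|x^1-x^*\|^2+\tfrac1\gamma\|\lambda\|^2\Big),
$$
which is precisely the hypothesis \eqref{eq:x-F} of Lemma \ref{lem:x-rate} with $\tilde{x}=\bar{x}^{t+1}$ and $\phi(\lambda)=\tfrac{1}{t(t+1)}(\eta\|x^1-x^*\|^2+\tfrac1\gamma\|\lambda\|^2)$. Taking $\rho=2\|\lambda^*\|$ and computing $\sup_{\|\lambda\|\le\rho}\phi(\lambda)=\tfrac{1}{t(t+1)}(\eta\|x^1-x^*\|^2+\tfrac{4\|\lambda^*\|^2}{\gamma})$, Lemma \ref{lem:x-rate} converts this into the bound \eqref{eq:ineq-eps}, and then Lemma \ref{lem:equiv-rate} (with $\rho-\|\lambda^*\|=\|\lambda^*\|$) delivers both the feasibility bound and the two-sided objective bound $|F(\bar{x}^{t+1})-F(x^*)|\le\epsilon$, matching \eqref{eq:alalm-adapt-rate}. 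I expect the only real obstacle to be the verification in the third paragraph: one must track the exact $k$-dependence of each coefficient and confirm the sign cancellations, since this is where the quadratic speedup (as opposed to an uncontrolled residual) is actually secured by the interplay of $\alpha_k$, $\gamma_k$, $\beta_k$, and $P^k$.
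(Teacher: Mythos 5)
Your proposal is correct and follows essentially the same route as the paper's proof: multiply the one-iteration bound \eqref{eq:lalm-1step} by the weight $k(k+1)$, observe that the adaptive choices make the Bregman coefficients constant so the squared-distance terms telescope, check that the residual terms are nonpositive via $\eta\ge 2L_f$ and $\beta_k\ge\gamma_k/2$, and finish with Lemmas \ref{lem:x-rate} and \ref{lem:equiv-rate} at $\rho=2\|\lambda^*\|$. All the coefficient computations in your third paragraph match the paper's \eqref{eq:bd5}, so no gaps remain.
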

\begin{proof}
With the parameters given in \eqref{eq:alalm-adapt-para}, we multiply $k(k+1)$ to both sides of \eqref{eq:lalm-1step} to have
\begin{align}\label{eq:bd5}
&k(k+1)\big[F(\bar{x}^{k+1})-F(x)-\langle\lambda, A\bar{x}^{k+1}-b\rangle\big]-k(k-1)\big[F(\bar{x}^k)-F(x)-\langle\lambda, A\bar{x}^k-b\rangle\big]\cr
\le & -\eta\big[\|x^{k+1}-x\|^2-\|x^k-x\|^2+\|x^{k+1}-x^k\|^2\big]+\frac{1}{\gamma}\big[\|\lambda^k-\lambda\|^2-\|\lambda^{k+1}-\lambda\|^2+\|\lambda^{k+1}-\lambda^k\|^2\big]\cr
& -\frac{2k\beta_k}{\gamma_k^2}\|\lambda^{k+1}-\lambda^k\|^2+\frac{2k L_f}{k+1}\|x^{k+1}-x^k\|^2\cr
\le &-\eta\big[\|x^{k+1}-x\|^2-\|x^k-x\|^2\big]+\frac{1}{\gamma}\big[\|\lambda^k-\lambda\|^2-\|\lambda^{k+1}-\lambda\|^2\big].
\end{align}
Summing \eqref{eq:bd5} from $k=1$ through $t$, we have
\begin{align}\label{eq:bd6}
t(t+1)\big[F(\bar{x}^{t+1})-F(x)-\langle\lambda, A\bar{x}^{t+1}-b\rangle\big]\le  \eta \|x^1-x\|^2+\frac{1}{\gamma}\|\lambda^1-\lambda\|^2.
\end{align}
Letting $x=x^*$ in the above inequality and then applying Lemmas \ref{lem:x-rate} and \ref{lem:equiv-rate}, we obtain the desired result.
\end{proof}

\begin{remark}
With a positive definite matrix $P^k$, the subproblem \eqref{eq:new-x} becomes strongly convex and thus has a unique solution. One drawback of Theorem \ref{thm:alalm-2-t} is that the setting in \eqref{eq:alalm-adapt-para} does not allow linearization to the augmented term. The coexistence of the possibly nonsmooth term $g$ and the augmented term $\|Ax-b\|^2$ can still cause difficult subproblems. In that case, we can solve the subproblem inexactly. Theoretically we will lose the fast convergence  shown in Theorem \ref{thm:alalm-2-t}. 
However, empirically we still observe fast convergence even subproblems are solved to a medium accuracy; see the experimental results in section \ref{sec:quadprog}. To linearize the augmented term and retain $O(1/t^2)$ convergence, we need assume strong convexity of the objective; see Theorem \ref{thm:apadmm-rate} below. 
\end{remark}

\subsection{Analysis of the accelerated linearized ADMM}
In this subsection, we establish the convergence rate of Algorithm \ref{alg:apadmm}. In addition to Assumption \ref{lalm-assump1}, we make the following assumptions to the objective function of \eqref{eq:prob-padmm}.
\begin{assumption}\label{assump3}
The function $h$ has Lipschitz continuous gradient with constant $L_h$, and $g$ and $h$ are strongly convex with modulus $\mu_g$ and $\mu_h$ that satisfy $\mu_g+\mu_h>0$.
\end{assumption}
 Note that without strong convexity, $O(1/t)$ convergence rate can be shown; see \cite{ouyang2015accelerated, gao2015first} for example. Also note that the $O(1/t^2)$ rate has been established in \cite{goldstein2014fast} if both $f$ and $g+h$ are strongly convex and no linearization is performed. 

Similar to the analysis in the previous subsection, we first establish a result of running one iteration of Algorithm \ref{alg:apadmm}.
\begin{lemma}[One-iteration result]\label{lem:padmm-1step}
Let $\{(y^k,z^k,\lambda^k)\}_{k\ge 1}$ be the sequence generated from Algorithm \ref{alg:apadmm}. Then for any $(y,z,\lambda)$ such that $By+Cz=b$, it holds
\begin{align}\label{eq:2b-bd4}
& F(y^{k+1},z^{k+1})-F(y,z)-\langle\lambda, By^{k+1}+Cz^{k+1}-b\rangle\cr
\le &-\left\langle \frac{1}{\gamma_k}(\lambda^k-\lambda^{k+1}), \lambda-\lambda^k+\frac{\beta_k}{\gamma_k}(\lambda^k-\lambda^{k+1})\right\rangle+ \beta_k\left\langle \frac{1}{\gamma_k}(\lambda^k-\lambda^{k+1})-C(z^{k+1}-z),C(z^{k+1}-z^k)\right\rangle\cr
&+\frac{L_h}{2}\|z^{k+1}-z^k\|^2-\frac{\mu_h}{2}\|z^k-z\|^2-\frac{\mu_g}{2}\|z^{k+1}-z\|^2\cr
&-\langle y^{k+1}-y,P^k(y^{k+1}-y^k)\rangle-\langle z^{k+1}-z,Q^k(z^{k+1}-z^k)\rangle,
\end{align}
where $F$ is given in \eqref{eq:F-padmm}.
\end{lemma}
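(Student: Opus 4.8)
The plan is to mirror the proof strategy of Lemma~\ref{thm:lalm-1step}, but now handling the two separate subproblems in \eqref{eq:apadmm-y} and \eqref{eq:apadmm-z} and carefully tracking the strong convexity moduli $\mu_g,\mu_h$. First I would write out the first-order optimality conditions for the $y$-update and the $z$-update. For the $y$-subproblem, convexity of $f$ gives, for any $y$,
\begin{align*}
0\ge \langle y^{k+1}-y,-B^\top\lambda^k+\beta_k B^\top(By^{k+1}+Cz^k-b)+P^k(y^{k+1}-y^k)\rangle+f(y^{k+1})-f(y).
\end{align*}
For the $z$-subproblem, I would use both the strong convexity of $g$ (modulus $\mu_g$) to pick up the $-\frac{\mu_g}{2}\|z^{k+1}-z\|^2$ term and the descent inequality \eqref{eq:lip-ineq} together with the strong convexity of $h$ (modulus $\mu_h$) to linearize $h$ at $z^k$ and produce both the $\frac{L_h}{2}\|z^{k+1}-z^k\|^2$ term and the $-\frac{\mu_h}{2}\|z^k-z\|^2$ term. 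The strong-convexity bookkeeping is the point where this lemma genuinely departs from the ALM case, so I would be deliberate about which modulus attaches to which argument.

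Next I would assemble the objective gap $F(y^{k+1},z^{k+1})-F(y,z)-\langle\lambda,By^{k+1}+Cz^{k+1}-b\rangle$ by adding the two optimality inequalities, after using the Lipschitz/strong-convexity estimate $h(z^{k+1})-h(z)\le \langle\nabla h(z^k),z^{k+1}-z\rangle+\frac{L_h}{2}\|z^{k+1}-z^k\|^2-\frac{\mu_h}{2}\|z^k-z\|^2$ to replace the linearized gradient term $\langle\nabla h(z^k),z^{k+1}-z\rangle$ that appears in the $z$-update by the true function difference $h(z^{k+1})-h(z)$. The key algebraic manipulation is to rewrite the augmented-term inner products in terms of the multiplier. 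Using the update \eqref{eq:apadmm-lam}, I have $By^{k+1}+Cz^{k+1}-b=\frac{1}{\gamma_k}(\lambda^k-\lambda^{k+1})$, which converts the $z$-update's augmented term directly. The $y$-update, however, involves $By^{k+1}+Cz^k-b$, not the full residual; I would write $By^{k+1}+Cz^k-b=(By^{k+1}+Cz^{k+1}-b)-C(z^{k+1}-z^k)=\frac{1}{\gamma_k}(\lambda^k-\lambda^{k+1})-C(z^{k+1}-z^k)$, which is precisely the quantity appearing in the cross term $\beta_k\langle\frac{1}{\gamma_k}(\lambda^k-\lambda^{k+1})-C(z^{k+1}-z),C(z^{k+1}-z^k)\rangle$ of the target inequality.

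The main obstacle, and where I expect most of the work to concentrate, is reconciling the $y$-subproblem's coupling term $Cz^k$ with the desired expression. After substituting the residual identity, the $y$-update contributes an inner product of $B(y^{k+1}-y)$ against $\frac{1}{\gamma_k}(\lambda^k-\lambda^{k+1})-C(z^{k+1}-z^k)$, while the $z$-update contributes $C(z^{k+1}-z)$ against $\frac{1}{\gamma_k}(\lambda^k-\lambda^{k+1})$. The constraint-feasibility assumption $By+Cz=b$ lets me combine $B(y^{k+1}-y)+C(z^{k+1}-z)=By^{k+1}+Cz^{k+1}-b=\frac{1}{\gamma_k}(\lambda^k-\lambda^{k+1})$, so the two multiplier inner products fuse into the leading term $-\langle\frac{1}{\gamma_k}(\lambda^k-\lambda^{k+1}),\lambda-\lambda^k+\frac{\beta_k}{\gamma_k}(\lambda^k-\lambda^{k+1})\rangle$, while the stray $-\beta_k\langle B(y^{k+1}-y),C(z^{k+1}-z^k)\rangle$ gets folded, via the same feasibility substitution, into the stated cross term. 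Once the multiplier terms are organized this way and the proximal terms $-\langle y^{k+1}-y,P^k(y^{k+1}-y^k)\rangle$ and $-\langle z^{k+1}-z,Q^k(z^{k+1}-z^k)\rangle$ are carried along unchanged from the optimality conditions, the inequality \eqref{eq:2b-bd4} should follow directly; no application of the polarization identity \eqref{uv-cross} is needed at this stage, since the cross terms are left in inner-product form for use in the subsequent telescoping argument.
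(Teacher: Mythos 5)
Your proposal is correct and follows essentially the same route as the paper: optimality conditions for the two subproblems, convexity of $f$ and strong convexity of $g$, the combined Lipschitz/strong-convexity estimate $h(z^{k+1})\le h(z)+\langle\nabla h(z^k),z^{k+1}-z\rangle+\frac{L_h}{2}\|z^{k+1}-z^k\|^2-\frac{\mu_h}{2}\|z^k-z\|^2$, and then the residual substitutions $By^{k+1}+Cz^k-b=\frac{1}{\gamma_k}(\lambda^k-\lambda^{k+1})-C(z^{k+1}-z^k)$ and $B(y^{k+1}-y)+C(z^{k+1}-z)=\frac{1}{\gamma_k}(\lambda^k-\lambda^{k+1})$ to organize the multiplier and cross terms. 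Your observation that the polarization identity \eqref{uv-cross} is not needed here but only in the later telescoping step also matches the paper's structure.
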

\begin{proof}
From the update \eqref{eq:apadmm-y}, we have the optimality condition
$$0= \tilde{\nabla} f(y^{k+1})-B^\top\lambda^k+\beta_k B^\top(By^{k+1}+Cz^k-b)+P^k(y^{k+1}-y^k),$$
where $\tilde{\nabla} f(y^{k+1})$ is a subgradient of $f$ at $y^{k+1}$.
Thus for any $y$,
\begin{align}\label{eq:2b-bd1}
0=&\left\langle y^{k+1}-y, \tilde{\nabla} f(y^{k+1})-B^\top\lambda^k+\beta_k B^\top(By^{k+1}+Cz^k-b)+P^k(y^{k+1}-y^k)\right\rangle\cr
\ge&f(y^{k+1})-f(y)+\left\langle y^{k+1}-y, -B^\top\lambda^k+\beta_k B^\top(By^{k+1}+Cz^k-b)+P^k(y^{k+1}-y^k)\right\rangle\cr
= & f(y^{k+1})-f(y)+\left\langle y^{k+1}-y, -B^\top\lambda^k+\beta_k B^\top(By^{k+1}+Cz^{k+1}-b)-\beta_k B^\top C(z^{k+1}-z^k)\right\rangle\cr
&+\big\langle y^{k+1}-y,P^k(y^{k+1}-y^k)\big\rangle\cr
=&f(y^{k+1})-f(y)+\left\langle B(y^{k+1}-y), -\lambda^k+\frac{\beta_k}{\gamma_k}(\lambda^k-\lambda^{k+1})\right\rangle - \beta_k\big\langle B(y^{k+1}-y),C(z^{k+1}-z^k)\big\rangle\cr
&+\big\langle y^{k+1}-y,P^k(y^{k+1}-y^k)\big\rangle,
\end{align}
where in the last equality, we have used the update rule \eqref{eq:apadmm-lam}. 
Similar to \eqref{eq:bd1}, we have
\begin{equation}\label{eq:2b-bd2}
h(z^{k+1})\le h(z)+\langle\nabla h(z^k), z^{k+1}-z\rangle+\frac{L_h}{2}\|z^{k+1}-z^k\|^2-\frac{\mu_h}{2}\|z^k-z\|^2.
\end{equation}

From the update rule of $z^{k+1}$, we have the optimality condition:
$$0= \tilde{\nabla} g(z^{k+1})+\nabla h(z^k)-C^\top \lambda^k +\beta_k C^\top(Bx^{k+1}+Cz^{k+1}-b)+Q^k(z^{k+1}-z^k).$$
Hence, for any $z$, it holds
\begin{align}\label{eq:2b-bd3}
0=&\left\langle z^{k+1}-z, \tilde{\nabla} g(z^{k+1})+\nabla h(z^k)-C^\top \lambda^k +\beta_k C^\top(By^{k+1}+Cz^{k+1}-b)+Q^k(z^{k+1}-z^k)\right\rangle\cr
\ge & g(z^{k+1})-g(z)+\frac{\mu_g}{2}\|z^{k+1}-z\|^2 + \langle z^{k+1}-z, \nabla h(z^k)\rangle \cr
&+\left\langle z^{k+1}-z,-C^\top \lambda^k +\beta_k C^\top(By^{k+1}+Cz^{k+1}-b)+Q^k(z^{k+1}-z^k)\right\rangle\cr
= &g(z^{k+1})-g(z)+\frac{\mu_g}{2}\|z^{k+1}-z\|^2+\big\langle z^{k+1}-z, \nabla h(z^k)\big\rangle +\big\langle z^{k+1}-z,Q^k(z^{k+1}-z^k)\big\rangle\cr
& +\left\langle C(z^{k+1}-z), -\lambda^k+\frac{\beta_k}{\gamma_k}(\lambda^k-\lambda^{k+1})\right\rangle,
\end{align}
where the inequality follows from the convexity of $g$. 

Since $(y,z)$ is feasible, summing \eqref{eq:2b-bd1}, \eqref{eq:2b-bd2} and \eqref{eq:2b-bd3} gives
\begin{align*}
& F(y^{k+1},z^{k+1})-F(y,z)-\langle\lambda, By^{k+1}+Cz^{k+1}-b\rangle\cr
\le & -\left\langle B(y^{k+1}-y), -\lambda^k+\frac{\beta_k}{\gamma_k}(\lambda^k-\lambda^{k+1})\right\rangle -\left\langle C(z^{k+1}-z), -\lambda^k+\frac{\beta_k}{\gamma_k}(\lambda^k-\lambda^{k+1})\right\rangle\cr
&-\langle\lambda, By^{k+1}+Cz^{k+1}-b\rangle+ \beta_k\langle B(y^{k+1}-y),C(z^{k+1}-z^k)\rangle\cr
&+\frac{L_h}{2}\|z^{k+1}-z^k\|^2-\frac{\mu_h}{2}\|z^k-z\|^2-\frac{\mu_g}{2}\|z^{k+1}-z\|^2\cr
&-\langle y^{k+1}-y,P^k(y^{k+1}-y^k)\rangle-\langle z^{k+1}-z,Q^k(z^{k+1}-z^k)\rangle
\end{align*}
which implies \eqref{eq:2b-bd4} by noting the update rule \eqref{eq:apadmm-lam}.
\end{proof}

When constant parameters are used in Algorithm \ref{alg:apadmm}, one can sum up \eqref{eq:2b-bd4} from $k=1$ through $t$ and use \eqref{uv-cross} to show an $O(1/t)$ convergence result. This has already been established in the literature; see \cite{gao2015first} for example. Hence, we state the result here without proof, and note that the result does not require any strong convexity of the objective.

\begin{theorem}\label{thm:apadmm-rate-const}
Assume the existence of $(x^*,\lambda^*)=(y^*,z^*,\lambda^*)$ satisfying \eqref{eq:kkt-lalm} and the gradient Lipschitz continuity of $h$. Let $\{(y^k,z^k,\lambda^k)\}_{k\ge 1}$ be the sequence generated from Algorithm \ref{alg:apadmm} with parameters set to
\begin{equation}\label{eq:para-apadmm-const}
\beta_k=\gamma_k=\gamma>0,\, P^k=P\succeq 0,\, Q^k=Q\succeq L_h I,\,\forall k.
\end{equation}
Then 
\begin{align*}
&\big|F(\tilde{y}^{t+1},\tilde{z}^{t+1})-F(y^*,z^*)\big|\le \frac{1}{2t}\left(\frac{4\|\lambda^*\|^2}{\gamma}+\|y^1-y^*\|_P^2+\|z^1-z^*\|_{Q+C^\top C}^2\right)\\
&\|B\tilde{y}^{t+1}+C\tilde{z}^{t+1}-b\|\le\frac{1}{2t\|\lambda^*\|}\left(\frac{4\|\lambda^*\|^2}{\gamma}+\|y^1-y^*\|_P^2+\|z^1-z^*\|_{Q+C^\top C}^2\right),
\end{align*}
where
$$\tilde{y}^{t+1}=\frac{\sum_{k=1}^t y^{k+1}}{t},\,\tilde{z}^{t+1}=\frac{\sum_{k=1}^t z^{k+1}}{t}.$$
\end{theorem}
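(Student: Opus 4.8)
The plan is to start from the one-iteration inequality \eqref{eq:2b-bd4} of Lemma \ref{lem:padmm-1step}, specialize it to the constant choice \eqref{eq:para-apadmm-const} (so $\beta_k=\gamma_k=\gamma$, $P^k=P$, $Q^k=Q$), and then sum over $k=1,\dots,t$. Since no strong convexity is assumed here, I would simply discard the two terms $-\frac{\mu_h}{2}\|z^k-z\|^2-\frac{\mu_g}{2}\|z^{k+1}-z\|^2$, which are nonpositive and never needed. The goal is to rewrite the whole right-hand side as a sum of telescoping quadratics plus a collection of ``diagonal'' terms in the one-step increments $\lambda^k-\lambda^{k+1}$, $y^{k+1}-y^k$, $z^{k+1}-z^k$, and to show that after one carefully weighted Young step every diagonal term is nonpositive.

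First I would convert each inner product via \eqref{uv-cross}. Because $\beta_k/\gamma_k=1$, the first term collapses to $-\frac1\gamma\langle\lambda^k-\lambda^{k+1},\lambda-\lambda^{k+1}\rangle$, which yields the telescoping pair $\frac{1}{2\gamma}\big(\|\lambda^k-\lambda\|^2-\|\lambda^{k+1}-\lambda\|^2\big)$ and the remainder $-\frac{1}{2\gamma}\|\lambda^k-\lambda^{k+1}\|^2$. The proximal terms $-\langle y^{k+1}-y,P(y^{k+1}-y^k)\rangle$ and $-\langle z^{k+1}-z,Q(z^{k+1}-z^k)\rangle$ telescope in the $P$- and $Q$-weighted norms, each leaving a nonpositive diagonal. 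Writing the second term of \eqref{eq:2b-bd4} as $\langle\lambda^k-\lambda^{k+1},C(z^{k+1}-z^k)\rangle-\gamma\langle C(z^{k+1}-z),C(z^{k+1}-z^k)\rangle$, the last piece telescopes in the $\gamma C^\top C$-weighted norm and leaves the diagonal $-\frac{\gamma}{2}\|C(z^{k+1}-z^k)\|^2$.

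The crux is the surviving cross term $\langle\lambda^k-\lambda^{k+1},C(z^{k+1}-z^k)\rangle$ together with the Lipschitz term $\frac{L_h}{2}\|z^{k+1}-z^k\|^2$. Here I would apply Young's inequality with the precise weight $\gamma$,
$$\langle\lambda^k-\lambda^{k+1},C(z^{k+1}-z^k)\rangle\le \frac{1}{2\gamma}\|\lambda^k-\lambda^{k+1}\|^2+\frac{\gamma}{2}\|C(z^{k+1}-z^k)\|^2,$$
so that the cross term is absorbed \emph{exactly} by the two diagonals $-\frac{1}{2\gamma}\|\lambda^k-\lambda^{k+1}\|^2$ and $-\frac{\gamma}{2}\|C(z^{k+1}-z^k)\|^2$ produced above. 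What remains of the Lipschitz term combines with the $Q$-diagonal into $-\frac12\|z^{k+1}-z^k\|_{Q-L_hI}^2\le0$ thanks to $Q\succeq L_hI$, and the $P$-diagonal is nonpositive because $P\succeq0$. This exact matching of constants is the delicate point and is precisely why the constant-parameter hypotheses $\beta=\gamma$ and $Q\succeq L_hI$ are imposed; I expect it to be the main obstacle in getting the bookkeeping to close.

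Once all diagonal terms are dropped, summing the telescoping quadratics from $k=1$ to $t$ leaves only the initial values:
$$\sum_{k=1}^{t}\big[F(y^{k+1},z^{k+1})-F(y,z)-\langle\lambda,By^{k+1}+Cz^{k+1}-b\rangle\big]\le \frac{1}{2\gamma}\|\lambda^1-\lambda\|^2+\frac12\|y^1-y\|_P^2+\frac12\|z^1-z\|_{Q+\gamma C^\top C}^2,$$
where the $z$-weight $Q+\gamma C^\top C$ reproduces the weighted norm in the statement. By convexity of $F$ and linearity of the multiplier term, the left side is at least $t$ times the same expression at the averaged iterates $\tilde y^{t+1},\tilde z^{t+1}$, so dividing by $t$ gives the $O(1/t)$ saddle-type estimate. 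Finally, setting $(y,z)=(y^*,z^*)$, using $\lambda^1=0$, and invoking Lemma \ref{lem:x-rate} and then Lemma \ref{lem:equiv-rate} with $\rho=2\|\lambda^*\|$ splits this single estimate into the separate objective-gap and feasibility bounds, exactly as in the conclusions of Theorems \ref{thm:alalm-1-t} and \ref{thm:alalm-2-t}.
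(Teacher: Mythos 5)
Your proposal is correct and follows exactly the route the paper indicates for Theorem \ref{thm:apadmm-rate-const} --- summing \eqref{eq:2b-bd4} over $k$ with $\beta_k=\gamma_k=\gamma$ and using \eqref{uv-cross} --- a proof the paper itself omits, deferring to the literature; your bookkeeping (the Young step with weight $\gamma$ absorbing the cross term exactly into the two diagonals $-\frac{1}{2\gamma}\|\lambda^k-\lambda^{k+1}\|^2$ and $-\frac{\gamma}{2}\|C(z^{k+1}-z^k)\|^2$, and $Q\succeq L_hI$ killing the remaining $z$-increment term) is sound. One small point: the telescoped $C$-term carries the weight $\gamma C^\top C$, so the derivation yields $\|z^1-z^*\|_{Q+\gamma C^\top C}^2$ rather than the $\|z^1-z^*\|_{Q+C^\top C}^2$ printed in the theorem; this appears to be a typo in the statement (harmless only when $\gamma\le 1$) rather than a gap in your argument.
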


Adapting the parameters, we can accelerate the rate to $O(1/t^2)$ as shown below.
\begin{theorem}\label{thm:apadmm-rate}
Under Assumptions \ref{lalm-assump1} and \ref{assump3}, let $\{(y^k,z^k,\lambda^k)\}_{k\ge 1}$ be the sequence generated from Algorithm \ref{alg:apadmm} with parameters set to
\begin{subequations}\label{eq:2b-para}
\begin{align}
&\beta_k=\gamma_k=(k+1)\gamma,\,\forall k\ge 1,\label{eq:2b-para-beta}\\
& P^k = \frac{P}{k+1}I,\,\forall k\ge 1,\label{eq:2b-para-P}\\
& Q^k=(k+1)\big(Q-\gamma C^\top C\big)+ L_h I,\,\forall k\ge 1, \label{eq:2b-para-Q}
\end{align}
\end{subequations}
where $P\succeq 0$ and
$\eta\gamma C^\top C \preceq Q\preceq \frac{\mu_g+\mu_h}{2}I$ with $\eta\ge 1$.   
Let 
\begin{equation}\label{eq:2b-def-k0}
k_0=\left\lceil1+\frac{2(L_h-\mu_h)}{\mu_g+\mu_h}\right\rceil.
\end{equation}
Then we have
\begin{equation}\label{eq:rate-z}
\|z^k-z^*\|^2_Q\le \frac{2 \phi_1(y^*,z^*,\lambda^*)}{k(k+k_0)},\quad\|z^k-z^*\|^2\le \frac{2 \phi_1(y^*,z^*,\lambda^*)}{(k+k_0)(L_h+\mu_h+2\mu_g)},
\end{equation}
and
\begin{subequations}\label{eq:rate-2-t-2b}
\begin{align}
&|F(\tilde{y}^{t+1},\tilde{z}^{t+1})-F(y^*,z^*)|\le\frac{2}{t(t+2k_0+3)}\phi_1(y^*,z^*,2\lambda^*)\label{eq:rate-obj-2-t-2b}\\
&\|B\tilde{y}^{t+1}+C\tilde{z}^{t+1}-b\|\le \frac{2}{t(t+2k_0+3)\|\lambda^*\|}\phi_1(y^*,z^*,2\lambda^*)\label{eq:rate-fea-2-t-2b}
\end{align}
\end{subequations}
where 
$$\tilde{y}^{t+1}=\frac{\sum_{k=1}^t(k+k_0+1)y^{k+1}}{\sum_{k=1}^t(k+k_0+1)},\quad \tilde{z}^{t+1}=\frac{\sum_{k=1}^t(k+k_0+1)z^{k+1}}{\sum_{k=1}^t(k+k_0+1)},$$
 and
\begin{align}\label{eq:def-phi}
\phi_k(y,z,\lambda)=&\frac{k+k_0}{2k}\|y^k-y\|_P^2+\frac{k+k_0}{2}\left(k\|z^k-z\|_Q^2+(L_h+\mu_g)\|z^k-z\|^2\right)+\frac{k+k_0}{2\gamma k}\|\lambda-\lambda^k\|^2.
\end{align}
In addition, if $P\succ 0$ and $\eta>1$, then $\{(y^k,z^k,\lambda^k)\}_{k\ge1}$ is bounded,  and 
\begin{subequations}
\begin{align}
&\|By^{k+1}+Cz^{k+1}-b\|\le o\left(\frac{1}{k+1}\right),\label{eq:rate-fea-2b}\\
&|F(y^{k+1},z^{k+1})-F(y^*,z^*)|\le O\left(\frac{1}{k+1}\right).\label{eq:rate-obj-2b}
\end{align}
\end{subequations}
\end{theorem}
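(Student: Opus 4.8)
The plan is to convert the one-iteration estimate \eqref{eq:2b-bd4} of Lemma~\ref{lem:padmm-1step} into a weighted telescoping inequality whose running potential is exactly $\phi_k$ from \eqref{eq:def-phi}. First I would substitute the parameters \eqref{eq:2b-para} into \eqref{eq:2b-bd4}. The choice $\beta_k=\gamma_k$ is what makes the two cross terms tractable: it collapses the dual cross term to $-\frac{1}{\gamma_k}\langle\lambda^k-\lambda^{k+1},\lambda-\lambda^{k+1}\rangle$ and the mixed term to $\langle\lambda^k-\lambda^{k+1},C(z^{k+1}-z^k)\rangle-\gamma_k\langle C(z^{k+1}-z),C(z^{k+1}-z^k)\rangle$. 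I would then apply the identity \eqref{uv-cross} to every inner product.

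The key enabling computation is that all coupling and all squared-difference terms either cancel or are absorbed. The dual cross term telescopes into $\frac{1}{2\gamma_k}\big(\|\lambda^k-\lambda\|^2-\|\lambda^{k+1}-\lambda\|^2\big)$ plus a residual $-\frac{1}{2\gamma_k}\|\lambda^k-\lambda^{k+1}\|^2$; bounding the leftover coupling $\langle\lambda^k-\lambda^{k+1},C(z^{k+1}-z^k)\rangle$ by Young's inequality with weight $1/\gamma_k$ makes the $\|\lambda^k-\lambda^{k+1}\|^2$ part cancel that residual, while the $\frac{\gamma_k}{2}\|C(z^{k+1}-z^k)\|^2$ part cancels the $-\gamma_k\langle C(z^{k+1}-z),C(z^{k+1}-z^k)\rangle$ contribution (again using $\beta_k=\gamma_k$). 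The $-\gamma C^\top C$ built into $Q^k$ in \eqref{eq:2b-para-Q} is engineered precisely so that, after expanding $-\langle z^{k+1}-z,Q^k(z^{k+1}-z^k)\rangle$ via \eqref{uv-cross}, the $C^\top C$ contributions to $\|z^{k+1}-z\|^2$ and $\|z^k-z\|^2$ cancel, the $L_h$ part of $\|z^{k+1}-z^k\|^2$ cancels the explicit $\frac{L_h}{2}\|z^{k+1}-z^k\|^2$, and the remaining $\frac{(k+1)\gamma}{2}\|C(z^{k+1}-z^k)\|^2$ is dominated by $-\frac{k+1}{2}\|z^{k+1}-z^k\|_Q^2$ thanks to $Q\succeq\eta\gamma C^\top C$ with $\eta\ge1$. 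What survives is a one-step bound carrying $\|\lambda^k-\lambda\|^2,\|\lambda^{k+1}-\lambda\|^2,\|y^k-y\|_P^2,\|y^{k+1}-y\|_P^2,\|z^k-z\|_Q^2,\|z^{k+1}-z\|_Q^2,\|z^k-z\|^2,\|z^{k+1}-z\|^2$ with explicit coefficients, plus nonpositive difference terms.

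Multiplying this bound by the weight $\theta_k=k+k_0+1$ (the same weight used to average the iterates), I would match coefficients against $\phi_k-\phi_{k+1}$. All iterate-$(k+1)$ coefficients match $-\phi_{k+1}$ exactly, and the $\lambda$- and $y$-coefficients at iterate $k$ are already no larger than those of $\phi_k$, so they telescope. The single genuinely problematic term is $\|z^k-z\|_Q^2$, whose weighted coefficient exceeds that of $\phi_k$ by $\frac{2k+k_0+1}{2}$; this excess must be paid for by the strong-convexity slack in the plain norm. This is the main obstacle. Using $Q\preceq\frac{\mu_g+\mu_h}{2}I$ to write $\|z^k-z\|_Q^2\le\frac{\mu_g+\mu_h}{2}\|z^k-z\|^2$, the whole $\|z^k-z\|^2$ requirement reduces to a single scalar inequality in which the $k$-dependence cancels identically and which holds if and only if $k_0\ge1+\frac{2(L_h-\mu_h)}{\mu_g+\mu_h}$ --- exactly the choice \eqref{eq:2b-def-k0}. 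This yields, for every feasible $(y,z)$ and every $\lambda$, the clean recursion $\theta_k\big[F(y^{k+1},z^{k+1})-F(y,z)-\langle\lambda,By^{k+1}+Cz^{k+1}-b\rangle\big]\le\phi_k(y,z,\lambda)-\phi_{k+1}(y,z,\lambda)$.

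Finally I would sum this from $k=1$ to $t$, discard $\phi_{t+1}\ge0$, and use convexity of $F$ (Jensen) with the normalized weights $\theta_k/\sum_j\theta_j$ to transfer the bound to $\tilde y^{t+1},\tilde z^{t+1}$; since $\sum_{k=1}^t\theta_k=\frac{t(t+2k_0+3)}{2}$ the prefactor becomes $\frac{2}{t(t+2k_0+3)}$. Setting $(y,z)=(y^*,z^*)$ and noting $\lambda^1=0$ gives $\sup_{\|\lambda\|\le2\|\lambda^*\|}\phi_1(y^*,z^*,\lambda)=\phi_1(y^*,z^*,2\lambda^*)$, so Lemmas~\ref{lem:x-rate} and \ref{lem:equiv-rate} with $\rho=2\|\lambda^*\|$ produce \eqref{eq:rate-2-t-2b}. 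For the iterate estimates \eqref{eq:rate-z} I would take $(y,z,\lambda)=(y^*,z^*,\lambda^*)$: then the left side of the recursion is nonnegative by \eqref{eq:opt-lalm}, so $\phi_k\le\phi_1$ for all $k$, and the $\|z^k-z^*\|_Q^2$ term of $\phi_k$ gives the first bound at once; the sharper plain-norm bound follows by additionally using strong convexity in $z$, namely, by separability of $F$ and $(\mu_g+\mu_h)$-strong convexity of $g+h$, $F(y^{k+1},z^{k+1})-F(y^*,z^*)-\langle\lambda^*,By^{k+1}+Cz^{k+1}-b\rangle\ge\frac{\mu_g+\mu_h}{2}\|z^{k+1}-z^*\|^2$, which upgrades the recursion to $\phi_{k+1}+\frac{(k+k_0+1)(\mu_g+\mu_h)}{2}\|z^{k+1}-z^*\|^2\le\phi_k$ and combines with the plain-$z$ term of $\phi_{k+1}$ to give the $(L_h+\mu_h+2\mu_g)$ denominator. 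For the last claims, $P\succ0$ together with $\phi_k\le\phi_1$ bounds $\{y^k\}$ and $\{\lambda^k\}$ (and the $z$-estimates bound $\{z^k\}$), while retaining the difference terms --- strictly negative when $\eta>1$ and $P\succ0$ --- makes the successive differences summable, from which the non-ergodic rates \eqref{eq:rate-fea-2b}--\eqref{eq:rate-obj-2b} follow via $By^{k+1}+Cz^{k+1}-b=\frac{1}{\gamma_k}(\lambda^k-\lambda^{k+1})$ and a standard asymptotic-regularity argument.
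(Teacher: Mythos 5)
Your plan reproduces the paper's own proof almost step for step: the same substitution of \eqref{eq:2b-para} into the one-iteration bound \eqref{eq:2b-bd4}, the same cancellations driven by $\beta_k=\gamma_k$ and by the $-\gamma C^\top C$ term built into $Q^k$, the same weights $k+k_0+1$ and potential $\phi_k$, the same reduction of the telescoping condition to a $k$-independent scalar inequality via $Q\preceq\frac{\mu_g+\mu_h}{2}I$ (this is exactly the paper's Proposition \ref{prop1}), the same strong-convexity upgrade that produces the $(L_h+\mu_h+2\mu_g)$ denominator in \eqref{eq:rate-z}, and the same application of Lemmas \ref{lem:x-rate} and \ref{lem:equiv-rate} with $\rho=2\|\lambda^*\|$ after computing $\sum_{k=1}^t(k+k_0+1)=\frac{t(t+2k_0+3)}{2}$. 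All of these steps check out.

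The one place your accounting fails is the Young split of the coupling term $\langle\lambda^k-\lambda^{k+1},C(z^{k+1}-z^k)\rangle$. With your weight $1/\gamma_k$, the piece $\frac{1}{2\gamma_k}\|\lambda^k-\lambda^{k+1}\|^2$ exactly cancels the residual from the dual telescoping, so the only strictly negative leftover when $\eta>1$ is of the form $\frac{(1-\eta)\gamma_k}{2}\|C(z^{k+1}-z^k)\|^2$ (plus the $P$-weighted $y$-differences). Summability of $\|C(z^{k+1}-z^k)\|^2$ does not control the multiplier increments, so your recursion cannot deliver $\lambda^k-\lambda^{k+1}\to0$ and hence cannot yield \eqref{eq:rate-fea-2b} through the identity $By^{k+1}+Cz^{k+1}-b=\frac{1}{\gamma_k}(\lambda^k-\lambda^{k+1})$; the objective rate \eqref{eq:rate-obj-2b} then also stalls, since its lower bound relies on the feasibility rate. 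The fix is the paper's split in \eqref{eq:2b-bd5-4}: bound the coupling by $\frac{1}{2\eta\gamma_k}\|\lambda^k-\lambda^{k+1}\|^2+\frac{\eta\gamma_k}{2}\|C(z^{k+1}-z^k)\|^2$, absorb the second piece into $-\frac{k+1}{2}\|z^{k+1}-z^k\|_Q^2$ using $Q\succeq\eta\gamma C^\top C$, and retain $-\frac{\eta-1}{2\eta\gamma_k}\|\lambda^k-\lambda^{k+1}\|^2$; the finiteness of its weighted sum gives $\lambda^k-\lambda^{k+1}\to0$ and then \eqref{eq:rate-fea-2b}--\eqref{eq:rate-obj-2b}. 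Everything else in your proposal is sound.
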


\begin{remark}
Note that if $Q$ is a diagonal matrix in \eqref{eq:2b-para-Q}, then the augmented term in \eqref{eq:apadmm-z} is also linearized. If $f=0$ and $B=0$, the problem \eqref{eq:prob-padmm} reduces to \eqref{eq:prob-lalm}. Therefore, Theorem \ref{thm:apadmm-rate} implies that we can further linearize the augmented term in the subproblem of the linearized ALM and still obtain $O(1/t^2)$ convergence if the objective is strongly convex.

Also note that taking $P=0$ and $Q=\gamma C^\top C$ leads to the standard ADMM with adaptive parameters. Hence, we obtain the same order of convergence rate as that in \emph{\cite{goldstein2014fast}} with strictly weaker conditions.
\end{remark}

To show this theorem, we first establish a few inequalities.
\begin{proposition}\label{prop1}
Let $k_0$ be defined in \eqref{eq:2b-def-k0}. Then for any $k\ge 1$,
\begin{align}\label{eq:2b-cond-k0}
(k+k_0)\big(k Q+(L_h+\mu_g)I\big)\succeq (k+k_0+1)\big((k+1)Q+(L_h-\mu_h)I\big).
\end{align}
\end{proposition}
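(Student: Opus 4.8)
The plan is to reduce this Loewner (PSD) inequality to a scalar one by collecting the coefficients of $Q$ and of $I$. Moving everything to one side, I would show that
$$(k+k_0)\big(kQ+(L_h+\mu_g)I\big)-(k+k_0+1)\big((k+1)Q+(L_h-\mu_h)I\big)\succeq 0.$$
First I would expand the two products and group terms. The coefficient of $Q$ is $(k+k_0)k-(k+k_0+1)(k+1)=-(2k+k_0+1)$, and the coefficient of $I$ is $-L_h+(k+k_0)\mu_g+(k+k_0+1)\mu_h$. The essential structural observation is that the coefficient of $Q$ is \emph{negative} for every $k\ge 1$.

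Because that coefficient is negative, I would exploit the \emph{upper} bound $Q\preceq\frac{\mu_g+\mu_h}{2}I$ from the hypotheses of Theorem \ref{thm:apadmm-rate}: multiplying $-Q\succeq-\frac{\mu_g+\mu_h}{2}I$ by the positive scalar $2k+k_0+1$ gives $-(2k+k_0+1)Q\succeq-(2k+k_0+1)\frac{\mu_g+\mu_h}{2}I$. Substituting this lower bound reduces the whole matrix inequality to the scalar statement that
$$-(2k+k_0+1)\frac{\mu_g+\mu_h}{2}-L_h+(k+k_0)\mu_g+(k+k_0+1)\mu_h\ge 0.$$
Note that the other hypothesis $\eta\gamma C^\top C\preceq Q$ plays no role in this estimate; it is needed elsewhere to control the linearized augmented term, not here.

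Collecting the $\mu_g$ and $\mu_h$ contributions, I expect the $k$-dependence to cancel, leaving exactly $\frac{k_0-1}{2}\mu_g+\frac{k_0+1}{2}\mu_h-L_h$, which is independent of $k$. The final step is to verify that this is nonnegative, i.e. $(k_0-1)\mu_g+(k_0+1)\mu_h\ge 2L_h$, or equivalently $(k_0-1)(\mu_g+\mu_h)\ge 2(L_h-\mu_h)$. Since $\mu_g+\mu_h>0$ by Assumption \ref{assump3}, this is precisely $k_0-1\ge\frac{2(L_h-\mu_h)}{\mu_g+\mu_h}$, which holds by the ceiling definition \eqref{eq:2b-def-k0}. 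There is no genuine obstacle in this argument; the only points requiring care are the \emph{direction} in which the bound on $Q$ is applied---one must use the upper bound because the coefficient of $Q$ turned out negative---and confirming that the reduced scalar inequality coincides exactly with the defining relation for $k_0$.
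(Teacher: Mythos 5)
Your proof is correct and follows essentially the same route as the paper's, which simply says to expand both sides, apply the upper bound $Q\preceq\frac{\mu_g+\mu_h}{2}I$, and invoke the definition of $k_0$ in \eqref{eq:2b-def-k0}; you have merely filled in the arithmetic details (the negative coefficient $-(2k+k_0+1)$ of $Q$, the cancellation of the $k$-dependence, and the reduction to $(k_0-1)(\mu_g+\mu_h)\ge 2(L_h-\mu_h)$), all of which check out.
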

\begin{proof}
Expanding the left hand side of the inequality and using $Q\preceq\frac{\mu_h+\mu_g}{2}I$ and \eqref{eq:2b-def-k0} shows the result.
\end{proof}

\begin{proposition}\label{prop2}
Under the assumptions of Theorem \ref{thm:apadmm-rate}, we have
\begin{align}\label{eq:2b-bd5}
& F(y^{k+1},z^{k+1})-F(y,z)-\langle\lambda, By^{k+1}+Cz^{k+1}-b\rangle\cr
\le & -\frac{1}{2\gamma(k+1)}\big[\|\lambda-\lambda^{k+1}\|^2-\|\lambda-\lambda^k\|^2\big]-\frac{\eta-1}{2\eta\gamma(k+1)}\|\lambda^k-\lambda^{k+1}\|^2\\
&-\frac{1}{2(k+1)}\big[\|y^{k+1}-y\|_{P}^2-\|y^k-y\|_{P}^2+\|y^{k+1}-y^k\|_{P}^2\big]
\cr
&-\frac{1}{2}\left((k+1)\|z^{k+1}-z\|_Q^2+(L_h+\mu_g)\|z^{k+1}-z\|^2\right)+\frac{1}{2}\left((k+1)\|z^k-z\|_Q^2+(L_h-\mu_h)\|z^k-z\|^2\right).\nonumber
\end{align}
\end{proposition}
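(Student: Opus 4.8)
The plan is to start from the one-iteration estimate \eqref{eq:2b-bd4} in Lemma \ref{lem:padmm-1step} and substitute the adaptive parameters $\beta_k=\gamma_k=(k+1)\gamma$, $P^k$ from \eqref{eq:2b-para-P}, and $Q^k=(k+1)(Q-\gamma C^\top C)+L_h I$. Since $\beta_k/\gamma_k=1$ and $1/\gamma_k=\frac{1}{(k+1)\gamma}$, every inner product on the right-hand side of \eqref{eq:2b-bd4} will be converted into a difference of (weighted) squared norms by repeated use of the polarization identity \eqref{uv-cross}; the goal is to show that all coupling terms either telescope or are dominated by the negative quadratics supplied by the parameter choice, leaving exactly the right-hand side of \eqref{eq:2b-bd5}.

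First I would treat the pure multiplier term. Using $\lambda-\lambda^k+\frac{\beta_k}{\gamma_k}(\lambda^k-\lambda^{k+1})=\lambda-\lambda^{k+1}$ and \eqref{uv-cross} with $W=I$, the first inner product in \eqref{eq:2b-bd4} becomes $-\frac{1}{2\gamma(k+1)}\big[\|\lambda-\lambda^{k+1}\|^2-\|\lambda-\lambda^k\|^2\big]-\frac{1}{2\gamma(k+1)}\|\lambda^k-\lambda^{k+1}\|^2$. I would then split the last coefficient via $1=\frac{\eta-1}{\eta}+\frac{1}{\eta}$, keeping $-\frac{\eta-1}{2\eta\gamma(k+1)}\|\lambda^k-\lambda^{k+1}\|^2$ for the final bound and reserving the surplus $-\frac{1}{2\eta\gamma(k+1)}\|\lambda^k-\lambda^{k+1}\|^2$. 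This surplus is exactly what absorbs the cross term $\langle\lambda^k-\lambda^{k+1},C(z^{k+1}-z^k)\rangle$ (the $\frac{1}{\gamma_k}$-part of the second inner product): by Young's inequality with weight $\frac{1}{\eta\gamma(k+1)}$, this cross term is at most $\frac{1}{2\eta\gamma(k+1)}\|\lambda^k-\lambda^{k+1}\|^2+\frac{\eta\gamma(k+1)}{2}\|z^{k+1}-z^k\|_{C^\top C}^2$, so the reserved surplus cancels the multiplier contribution and only $\frac{\eta\gamma(k+1)}{2}\|z^{k+1}-z^k\|_{C^\top C}^2$ remains.

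Next I would dispatch the remaining inner products with \eqref{uv-cross}. The $P^k$ term yields precisely the $y$-block of the claim. The residual $C^\top C$ coupling $-(k+1)\gamma\langle C(z^{k+1}-z),C(z^{k+1}-z^k)\rangle$ and the $Q^k$ term $-\langle z^{k+1}-z,Q^k(z^{k+1}-z^k)\rangle$ are each expanded; writing $Q^k=(k+1)Q-(k+1)\gamma C^\top C+L_h I$ and collecting, every $\|\cdot\|_{C^\top C}^2$ term cancels exactly — this is the point of the $-\gamma C^\top C$ correction inside $Q^k$. After this cancellation the $z$-dependent remainder is the telescoping pair $-\frac{k+1}{2}\|z^{k+1}-z\|_Q^2+\frac{k+1}{2}\|z^k-z\|_Q^2$, the plain-norm pieces $-\frac{L_h+\mu_g}{2}\|z^{k+1}-z\|^2+\frac{L_h-\mu_h}{2}\|z^k-z\|^2$ (the $\frac{L_h}{2}\|z^{k+1}-z^k\|^2$ from \eqref{eq:2b-bd2} cancels the one from the $L_h I$ part of $Q^k$), plus the leftover pair $\frac{\eta\gamma(k+1)}{2}\|z^{k+1}-z^k\|_{C^\top C}^2-\frac{k+1}{2}\|z^{k+1}-z^k\|_Q^2$. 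Invoking $\eta\gamma C^\top C\preceq Q$ shows this last pair is $\le 0$ and may be discarded, giving exactly the $z$-block of \eqref{eq:2b-bd5}.

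The main obstacle is the bookkeeping around the augmented-term coupling: the two independent cross terms $\langle\lambda^k-\lambda^{k+1},C(z^{k+1}-z^k)\rangle$ and $\langle C(z^{k+1}-z),C(z^{k+1}-z^k)\rangle$ must be handled by \emph{different} mechanisms — the former by spending the reserved $1/\eta$ fraction of the multiplier decrease through Young's inequality, the latter by exact cancellation against the $-\gamma C^\top C$ piece of $Q^k$ — and one must verify that the leftover $\|z^{k+1}-z^k\|_{C^\top C}^2$ produced by Young is still controlled by $\|z^{k+1}-z^k\|_Q^2$ under $\eta\gamma C^\top C\preceq Q$. Keeping all the $(k+1)$ weights and the split of the multiplier coefficient consistent so that no positive term survives is the delicate part; everything else is a routine application of \eqref{uv-cross}.
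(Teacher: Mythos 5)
Your proposal is correct and follows essentially the same route as the paper: start from \eqref{eq:2b-bd4}, use $\beta_k=\gamma_k$ and \eqref{uv-cross} to polarize every inner product, cancel the $C^\top C$ pieces via the $-\gamma C^\top C$ correction in $Q^k$, and absorb the cross term $\langle\lambda^k-\lambda^{k+1},C(z^{k+1}-z^k)\rangle$ using Young's inequality together with $Q\succeq\eta\gamma C^\top C$, which is exactly the computation in \eqref{eq:2b-bd5-4}. The only difference is cosmetic: the paper bounds $\langle a,b\rangle-\frac{\eta\gamma_k}{2}\|b\|_{C^\top C}^2\le\frac{1}{2\eta\gamma_k}\|a\|^2$ in one stroke, whereas you first reserve the $1/\eta$ fraction of the multiplier decrease and then cancel — algebraically the same step.
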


\begin{proof}
Since $\beta_k=\gamma_k$, we use \eqref{uv-cross} and have from \eqref{eq:2b-bd4} that 
\begin{align}\label{eq:2b-bd4-2}
& F(y^{k+1},z^{k+1})-F(y,z)-\langle\lambda, By^{k+1}+Cz^{k+1}-b\rangle\cr
\le & -\frac{1}{2\gamma_k}\big[\|\lambda^k-\lambda^{k+1}\|^2+\|\lambda-\lambda^{k+1}\|^2-\|\lambda-\lambda^k\|^2\big]+ \left\langle \lambda^k-\lambda^{k+1},C(z^{k+1}-z^k)\right\rangle\cr
&-\frac{\gamma_k}{2}\big[\|C(z^{k+1}-z)\|^2-\|C(z^k-z)\|^2+\|C(z^{k+1}-z^k)\|^2\big]+\frac{L_h}{2}\|z^{k+1}-z^k\|^2-\frac{\mu_h}{2}\|z^k-z\|^2\cr
&-\frac{\mu_g}{2}\|z^{k+1}-z\|^2-\frac{1}{2}\big[\|y^{k+1}-y\|_{P^k}^2-\|y^k-y\|_{P^k}^2+\|y^{k+1}-y^k\|_{P^k}^2\big]\cr
&-\frac{1}{2}\big[\|z^{k+1}-z\|_{Q^k}^2-\|z^k-z\|_{Q^k}^2+\|z^{k+1}-z^k\|_{Q^k}^2\big].
\end{align}
Note that from the parameter setting, we have
\begin{align}\label{eq:2b-bd5-4}
&\left\langle \lambda^k-\lambda^{k+1},C(z^{k+1}-z^k)\right\rangle-\frac{\gamma_k}{2}\|C(z^{k+1}-z^k)\|^2+\frac{L_h}{2}\|z^{k+1}-z^k\|^2-\frac{1}{2}\|z^{k+1}-z^k\|_{Q^k}^2\cr
=& \left\langle \lambda^k-\lambda^{k+1},C(z^{k+1}-z^k)\right\rangle-\frac{k+1}{2}\|z^{k+1}-z^k\|_{Q}^2\cr
\le & \left\langle \lambda^k-\lambda^{k+1},C(z^{k+1}-z^k)\right\rangle-\frac{\eta\gamma_k}{2}\|z^{k+1}-z^k\|_{C^\top C}^2\cr
\le & \frac{1}{2\eta\gamma_k}\|\lambda^k-\lambda^{k+1}\|^2.
\end{align}
Plugging \eqref{eq:2b-bd5-4} and also the parameters in \eqref{eq:2b-para} into \eqref{eq:2b-bd4-2} gives \eqref{eq:2b-bd5}.
\end{proof}

Now we are ready to show Theorem \ref{thm:apadmm-rate}.
\begin{proof} [Proof of Theorem \ref{thm:apadmm-rate}]

Letting $(y,z)=(y^*,z^*)$ in \eqref{eq:2b-bd5} and rearranging terms gives
\begin{align}
& \big[F(y^{k+1},z^{k+1})-F(y^*,z^*)-\langle\lambda, By^{k+1}+Cz^{k+1}-b\rangle\big]+\frac{1}{2(k+1)}\|y^*-y^{k+1}\|_P^2\cr
&+\frac{1}{2}\left((k+1)\|z^{k+1}-z^*\|_Q^2+(L_h+\mu_g)\|z^{k+1}-z^*\|^2\right)+\frac{1}{2\gamma(k+1)}\|\lambda-\lambda^{k+1}\|^2\cr
\le & \frac{1}{2(k+1)}\|y^k-y^*\|_P^2+\frac{1}{2}\left((k+1)\|z^k-z^*\|_Q^2+(L_h-\mu_h)\|z^k-z^*\|^2\right)\cr
&+\frac{1}{2\gamma(k+1)}\|\lambda-\lambda^k\|^2-\frac{\eta-1}{2\eta\gamma(k+1)}\|\lambda^k-\lambda^{k+1}\|^2.\label{eq:2b-bd8}
\end{align}
Multiplying $k+k_0+1$ to both sides of the above inequality and using notation $\phi_k$ defined in \eqref{eq:def-phi}, we have 
\begin{align}
& (k+k_0+1)\big[F(y^{k+1},z^{k+1})-F(y^*,z^*)-\langle\lambda, By^{k+1}+Cz^{k+1}-b\rangle\big]+\phi_{k+1}(y^*,z^*,\lambda)\cr
\le & \frac{k+k_0+1}{2(k+1)}\|y^k-y^*\|_P^2+\frac{k+k_0+1}{2}\left((k+1)\|z^k-z^*\|_Q^2+(L_h-\mu_h)\|z^k-z^*\|^2\right)\cr
&+\frac{k+k_0+1}{2\gamma(k+1)}\left(\|\lambda-\lambda^k\|^2-\frac{\eta-1}{\eta}\|\lambda^k-\lambda^{k+1}\|^2\right)\cr
\le & \frac{k+k_0}{2k}\|y^k-y^*\|_P^2+\frac{k+k_0}{2}\left(k\|z^k-z^*\|_Q^2+(L_h+\mu_g)\|z^k-z^*\|^2\right)+\frac{k+k_0}{2\gamma k}\|\lambda-\lambda^k\|^2\cr
&-\frac{k+k_0+1}{2\gamma(k+1)}\frac{\eta-1}{\eta}\|\lambda^k-\lambda^{k+1}\|^2,\cr
=&\phi_k(y^*,z^*,\lambda)-\frac{k+k_0+1}{2\gamma(k+1)}\frac{\eta-1}{\eta}\|\lambda^k-\lambda^{k+1}\|^2,\label{eq:2b-bd9}
\end{align}
where in the second inequality, we have used \eqref{eq:2b-cond-k0} and the decreasing monotonicity of $\frac{k+k_0+1}{k+1}$ with respect to $k$.

Letting $\lambda=\lambda^*$ in \eqref{eq:2b-bd9} and using \eqref{eq:opt-lalm}, we have 
\begin{equation}\label{eq:dec-phi}
\phi_{k+1}(y^*,z^*,\lambda^*)\le\phi_k(y^*,z^*,\lambda^*).
\end{equation}
In addition, note that
\begin{align*}
&F(y^{k+1},z^{k+1})-F(y^*,z^*)-\langle\lambda^*, By^{k+1}+Cz^{k+1}-b\rangle\\
=& F(y^{k+1},z^{k+1})-F(y^*,z^*)-\langle\lambda^*, B(y^{k+1}-y^*)+C(z^{k+1}-z^*)\rangle\\
=&f(y^{k+1})-f(x^*)-\langle B^\top\lambda^*,y^{k+1}-y^*\rangle+(g+h)(z^{k+1})-(g+h)(z^*)-\langle C^\top\lambda^*, z^{k+1}-z^*\rangle\\
\ge&\frac{\mu_g+\mu_h}{2}\|z^{k+1}-z^*\|^2,
\end{align*} 
where the inequality is from the convexity of $f$ and $g+h$ and also the KKT conditions in \eqref{eq:kkt-lalm}. Hence, from \eqref{eq:2b-bd9} and \eqref{eq:dec-phi}, it follows that
$$\frac{(\mu_g+\mu_h)(k+k_0+1)}{2}\|z^{k+1}-z^*\|^2+\phi_{k+1}(y^*,z^*,\lambda^*)\le \phi_1(y^*,z^*,\lambda^*),$$
and thus we obtain the results in \eqref{eq:rate-z}. If $P\succ0$, the above inequality indicates the boundedness of $\{(x^k,y^k,\lambda^k)\}$ 

Again, letting $\lambda=\lambda^*$ in \eqref{eq:2b-bd9} and summing it from $k=1$ through $t$, we conclude from \eqref{eq:opt-lalm} and \eqref{eq:dec-phi} that
$$\sum_{k=1}^t\frac{k+k_0+1}{2\gamma(k+1)}\frac{\eta-1}{\eta}\|\lambda^k-\lambda^{k+1}\|^2\le\phi_1(y^*,z^*,\lambda^*),$$
and thus letting $t\to\infty$, we have $\lambda^k-\lambda^{k+1}\to 0$ from the above inequality as $\eta>1$, and thus \eqref{eq:rate-fea-2b} follows from the update rule \eqref{eq:apadmm-lam}. 
Furthermore, from the boundedness of $\{(y^k,z^k,\lambda^k)\}$, we let $\lambda=0$ in \eqref{eq:2b-bd8} to have 
$F(y^{k+1},z^{k+1})-F(y^*,z^*)\le O\left(\frac{1}{k+1}\right).$
Using \eqref{eq:opt-lalm} and \eqref{eq:rate-fea-2b}, we have
$F(y^{k+1},z^{k+1})-F(y^*,z^*)\ge -O\left(\frac{1}{k+1}\right),$
and thus \eqref{eq:rate-obj-2b} follows.

Finally, summing \eqref{eq:2b-bd9} from $k=1$ through $t$ and noting $\phi_k\ge0,\forall k$, we have
\begin{align*} \sum_{k=1}^t(k+k_0+1)\big[F(y^{k+1},z^{k+1})-F(y^*,z^*)-\langle\lambda, By^{k+1}+Cz^{k+1}-b\rangle\big]
\le \phi_1(y^*,z^*,\lambda).
\end{align*}
Then by the convexity of $F$, we have from the above inequality that for any $\lambda$,
\begin{align*}
F(\tilde{y}^{t+1},\tilde{z}^{t+1})-F(y^*,z^*)-\langle \lambda, B\tilde{y}^{t+1}+C\tilde{z}^{t+1}-b\rangle
\le \frac{\phi_1(y^*,z^*,\lambda)}{\sum_{k=1}^t(k+k_0+1)}
\end{align*}
By Lemmas \ref{lem:x-rate} and \ref{lem:equiv-rate} and the initialization $\lambda^1=0$, the above result implies the desired results in \eqref{eq:rate-2-t-2b}. This completes the proof.
\end{proof}


\section{Numerical results}\label{sec:numerical}
In this section, we test the proposed accelerated methods on solving three problems: quadratic programming, total variation regularized image denoising, and elastic net regularized support vector machine. We compare them to nonaccelerated methods and also existing accelerated methods to demonstrate their efficiency. 
\subsection{Quadratic programming}\label{sec:quadprog}
In this subsection, we test Algorithm \ref{alg:alalm} on quadratic programming. First, we compare the algorithm with fixed and adaptive parameters, i.e., nonaccelerated ALM and accelerated ALM, on equality constrained quadratic programming (ECQP):
\begin{equation}\label{eq:eq-qp}
\min_x~F(x)=\frac{1}{2}x^\top Q x + c^\top x,\st Ax=b.
\end{equation}
Note that ECQP can be solved in a direct way by solving a linear equation (c.f., \cite[Section 16.1]{nocedal2006numerical}), so ALM may not be the best choice for \eqref{eq:eq-qp}. Our purpose of using this simple example is to validate acceleration. 

We set the problem size to $m=20, n=500$ and generate $A\in\RR^{m\times n},b, c$ and $Q\in\RR^{n\times n}$ according to standard Gaussian distribution, where $Q$ is made to be a positive definite matrix. We set the parameters of Algorithm \ref{alg:alalm} to $\alpha_k=1, \beta_k=\gamma_k=m$ and $P^k= \|Q\|_2 I,\,\forall k$ for the nonaccelerated ALM, and $\alpha_k=\frac{2}{k+1}, \beta_k=\gamma_k=mk$ and $P^k=\frac{2\|Q\|_2}{k}I,\,\forall k$ for the accelerated ALM. Figure \ref{fig:qp-exact} plots the objective distance to the optimal value $|F(x)-F(x^*)|$ and the violation of feasibility $\|Ax-b\|$ given by the two methods. We can see that Algorithm \ref{alg:alalm} with adaptive parameters performs significantly better than it with fixed parameters, in both objective and feasibility measures.

\begin{figure}
\begin{center}
\begin{tabular}{cc}
\includegraphics[width=0.25\textwidth]{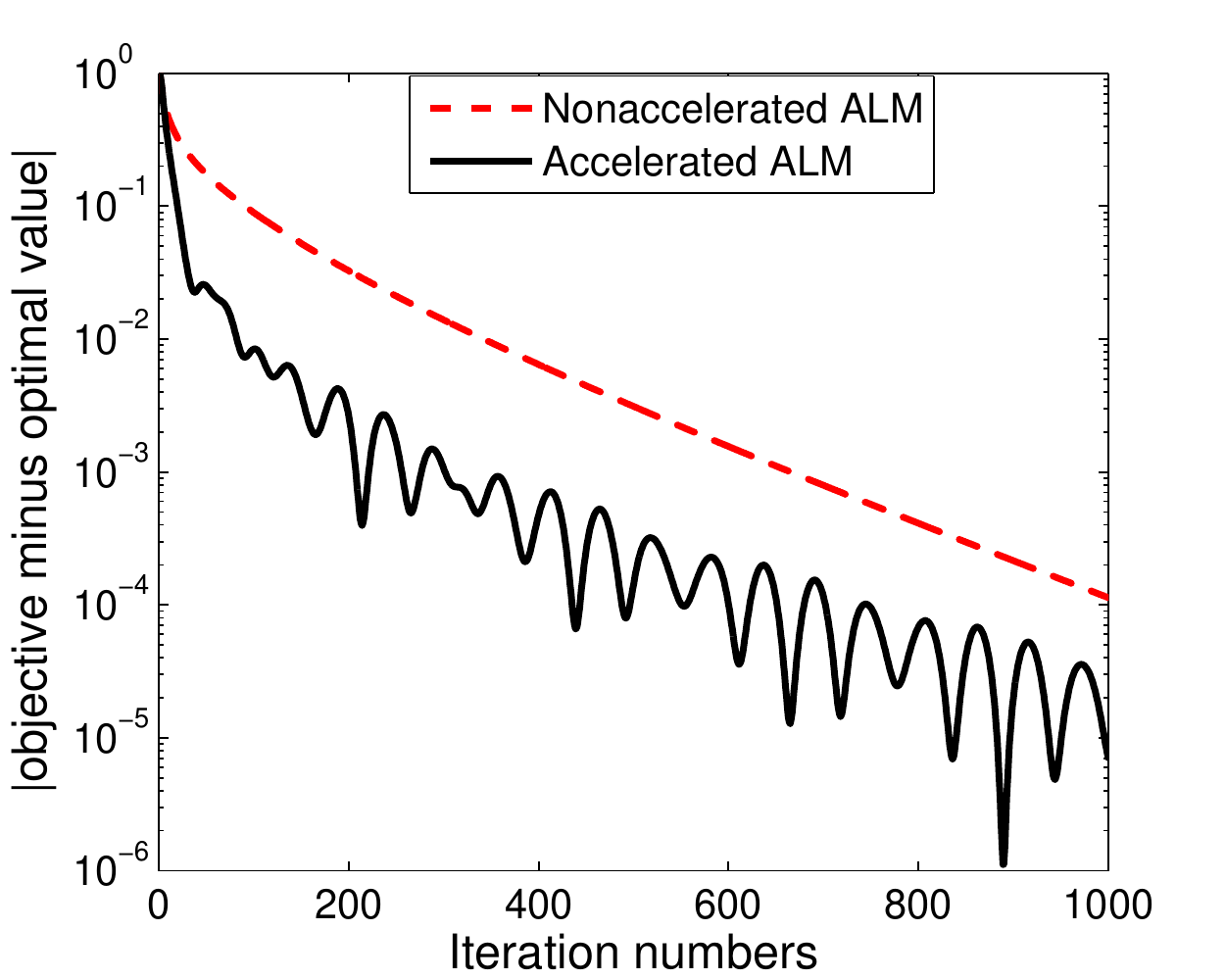} &
\includegraphics[width=0.25\textwidth]{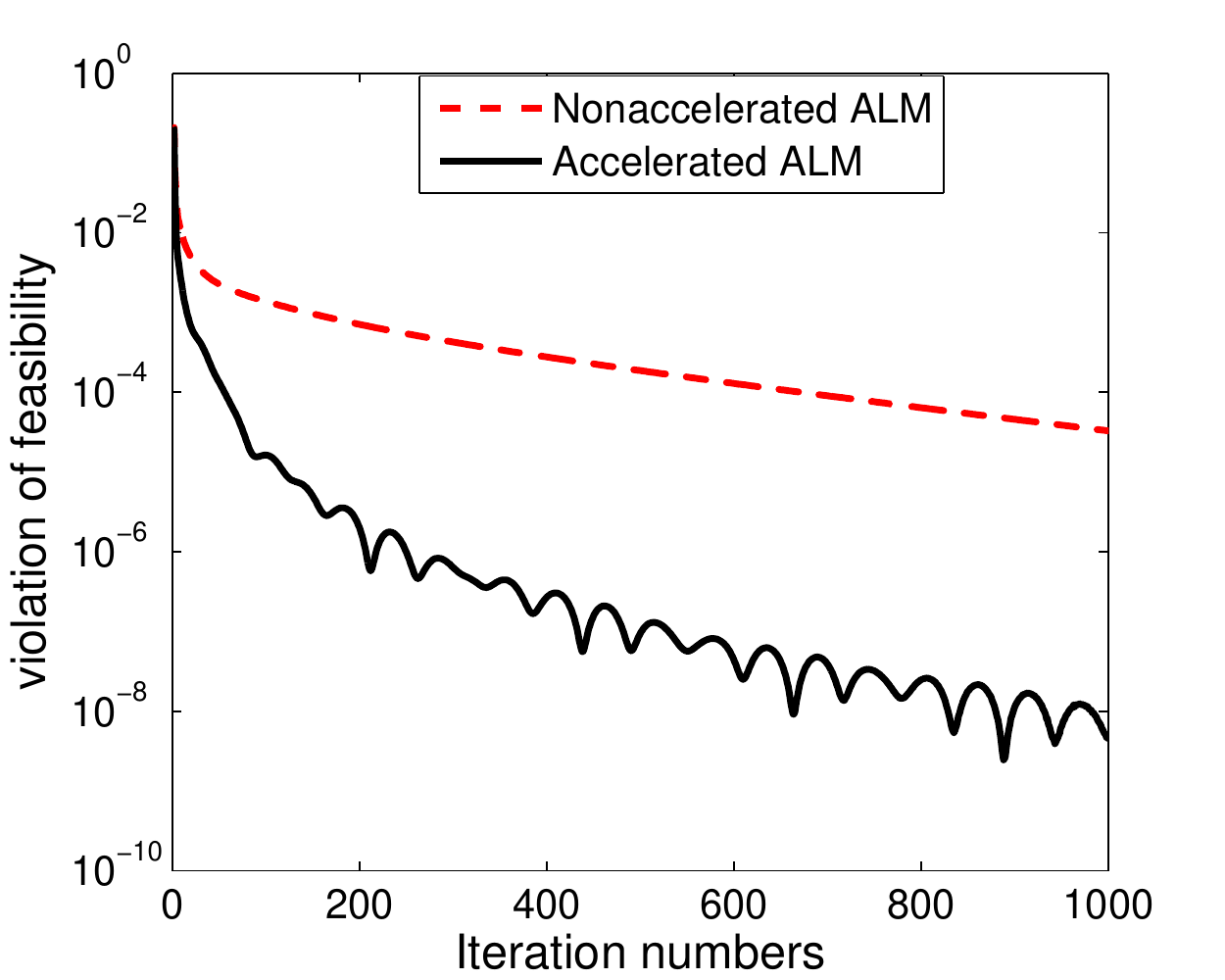}
\end{tabular}
\end{center}
\caption{Results by the nonaccelerated ALM (Algorithm \ref{alg:alalm} with fixed parameters) and the accelerated ALM (Algorithm \ref{alg:alalm} with adaptive parameters) on solving \eqref{eq:eq-qp}. Left: the distance of the objective value to the optimal value $|F(x)-F(x^*)|$; Right: the violation of feasibility $\|Ax-b\|$.}\label{fig:qp-exact}
\end{figure}

Secondly, we test the accelerated ALM on the nonnegative linearly constrained quadratic programming, which is formulated as follows:
\begin{equation}\label{eq:quadprog}
\min_x ~F(x)=\frac{1}{2}x^\top Q x + c^\top x,
\st  Ax = b,\ x\ge0.
\end{equation}

In the test, we set the problem size to $m=50$ and $n=1000$. We let $Q=HH^\top$, where $H\in\RR^{n\times (n-100)}$ and is generated according to standard Gaussian distribution. Hence, the objective is only weakly convex. The elements of $b$ and $c$ follow identically independent uniform distribution and standard Gaussian distribution, respectively. Thus, $b\ge 0$. The matrix $A\in\RR^{m\times n}$ has the form of $[B,I]$ to make sure feasibility of the problem. We generate $B$ according to both Gaussian and uniform distribution. Note that the uniformly distributed $B$ leads to more difficult problem. 

We set the parameters of Algorithm \ref{alg:alalm} according to \eqref{eq:alalm-adapt-para} with $\gamma=m$, $\eta=2\|Q\|_2$, and $\beta_k=\gamma_k,\,\forall k$. The most difficult step in Algorithm \ref{alg:alalm} is \eqref{eq:new-x}, which does not have a closed form solution with the above setting. We solve the subproblem by the interior-point method to a tolerance \verb|subtol|. Since $A$ only has 50 rows, each step of the interior-point method only needs to solve a $50\times 50$ equation and do some componentwise multiplication. We notice that ALALM converges fast in the beginning but slows down as it approaches the solution. Hence, we also test to restart it after a fixed number of iterations, and in this test, we simply restart it every 50 iterations.

We compare ALALM to FISTA \cite{FISTA2009}, which also has $O(1/t^2)$ convergence rate. At each iteration, FISTA requires a projection to the constraint set of \eqref{eq:quadprog}, and we solve it also by the interior-point method to the tolerance \verb|subtol|. Again, each step of the interior-point method only needs to solve a $50\times 50$ equation and do some componentwise multiplication. We also test restarted FISTA by restarting it every 50 iterations. Note that a restarted FISTA is proposed in \cite{o2015adaptive} by checking the monotonicity of the objective value or gradient norm. However, since subproblems are solved inaccurately, the restart scheme in \cite{o2015adaptive} does not work here.

Figure \ref{fig:qp-G50} plots the results corresponding to Gaussian randomly generated matrix $B$ and Figure \ref{fig:qp-U50} corresponding to uniformly random $B$. In both figures, \verb|subtol| varies among $\{10^{-6},10^{-8},10^{-10}\}$. From the figures, we see that both FISTA and ALALM perform better when restarted periodically, and ALALM performs more stably than FISTA to different \verb|subtol|. Even if the subproblems are solved inaccurately only to the tolerance $10^{-6}$, the restarted ALALM can still reach almost machine accuracy. However, FISTA can reach an accurate solution only if the subproblems are solved to a high accuracy such as $\verb|subtol|=10^{-10}$ and $B$ is Gaussian randomly generated.

\begin{figure}
\begin{center}
\begin{tabular}{ccc}
$\verb|subtol|=10^{-6}$ & $\verb|subtol|=10^{-8}$ & $\verb|subtol|=10^{-10}$\\
\includegraphics[width=0.25\textwidth]{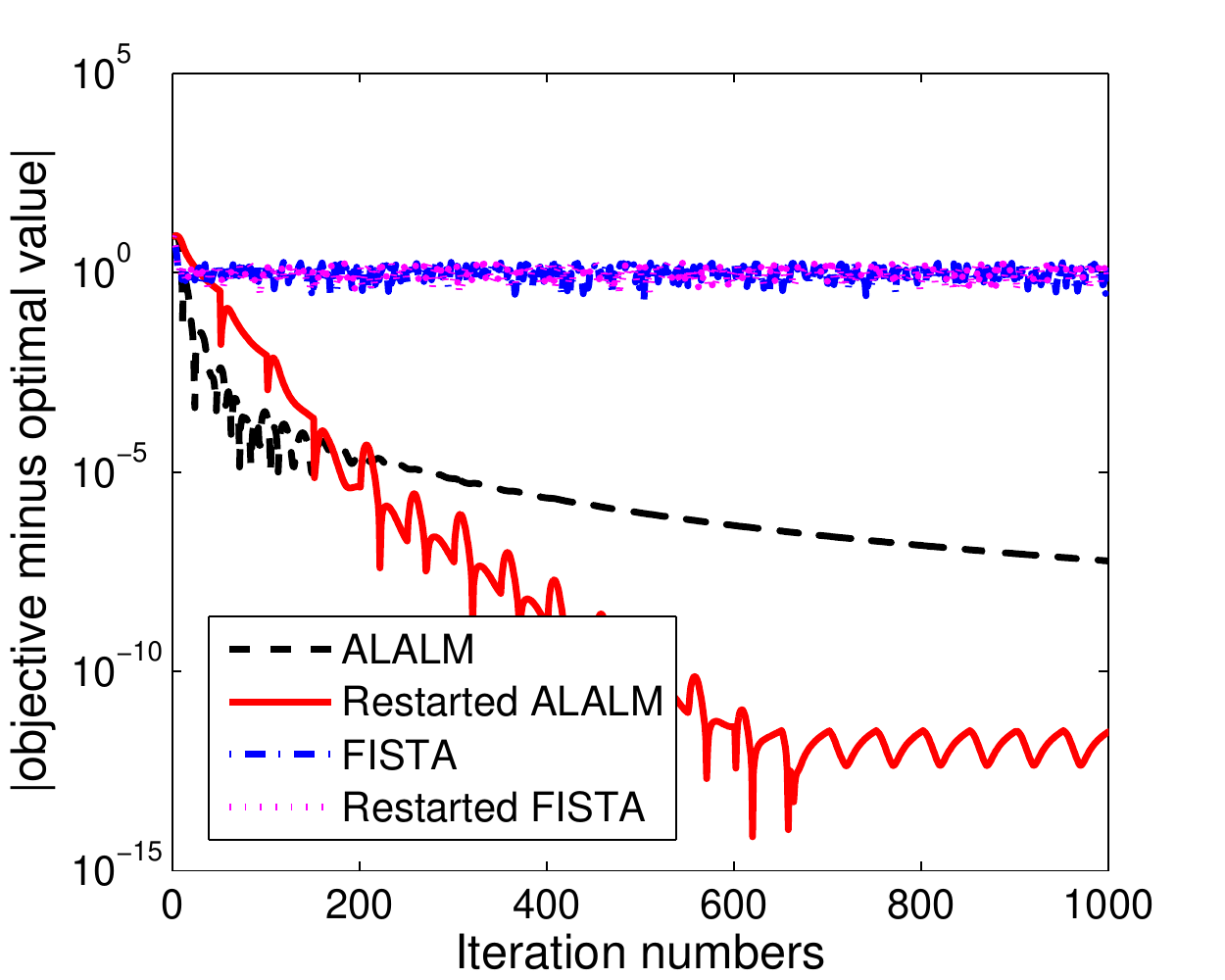} &
\includegraphics[width=0.25\textwidth]{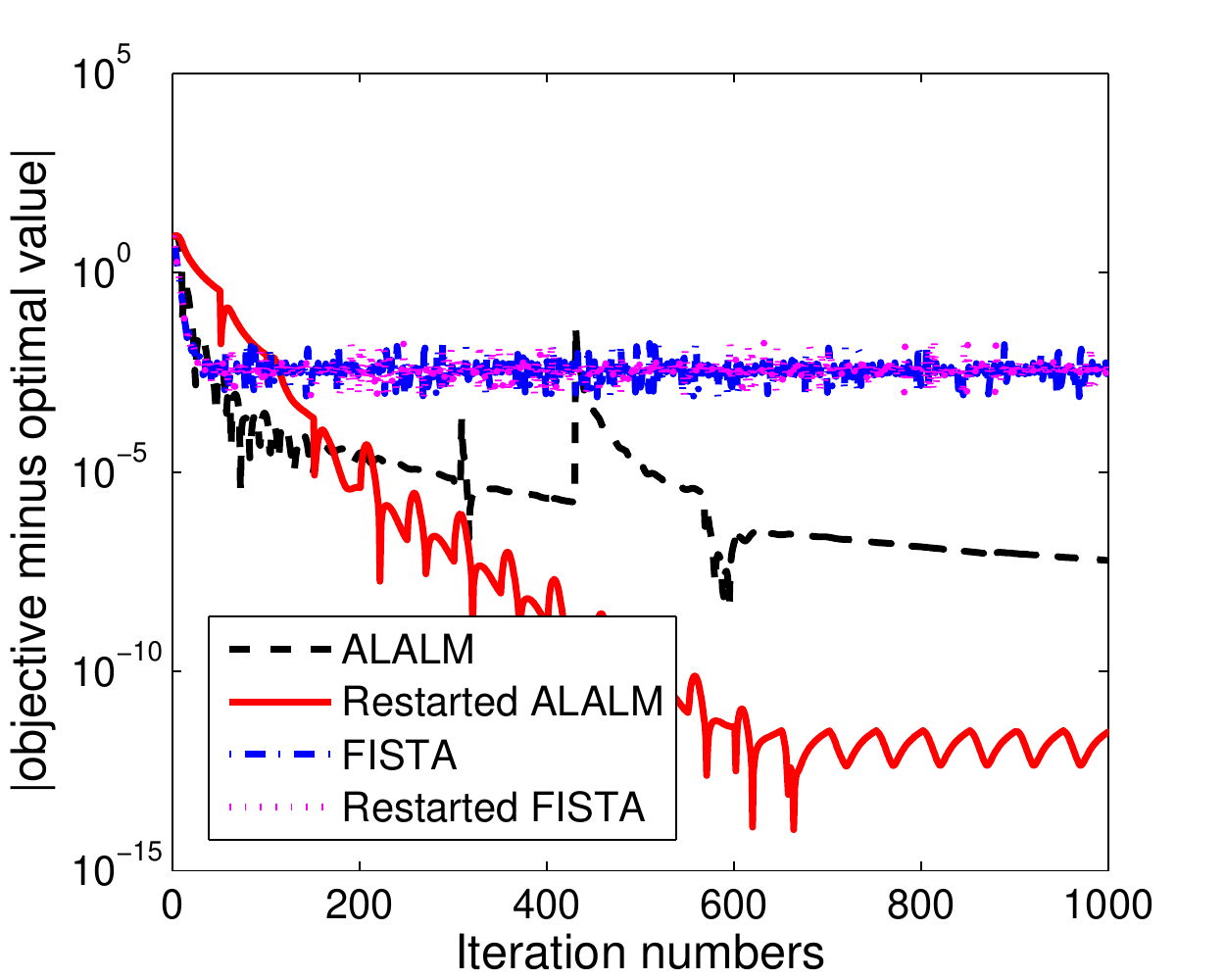} &
\includegraphics[width=0.25\textwidth]{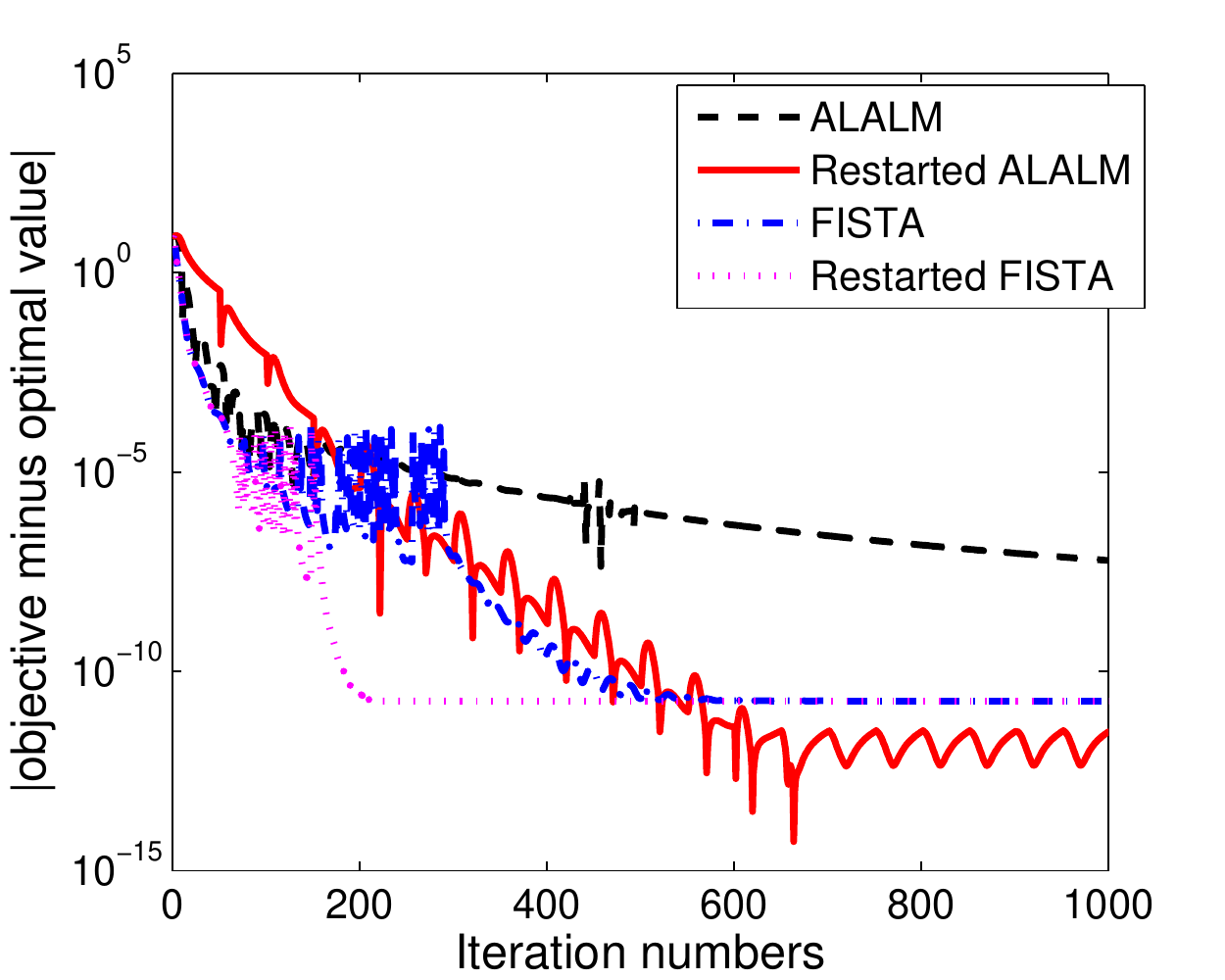}\\
\includegraphics[width=0.25\textwidth]{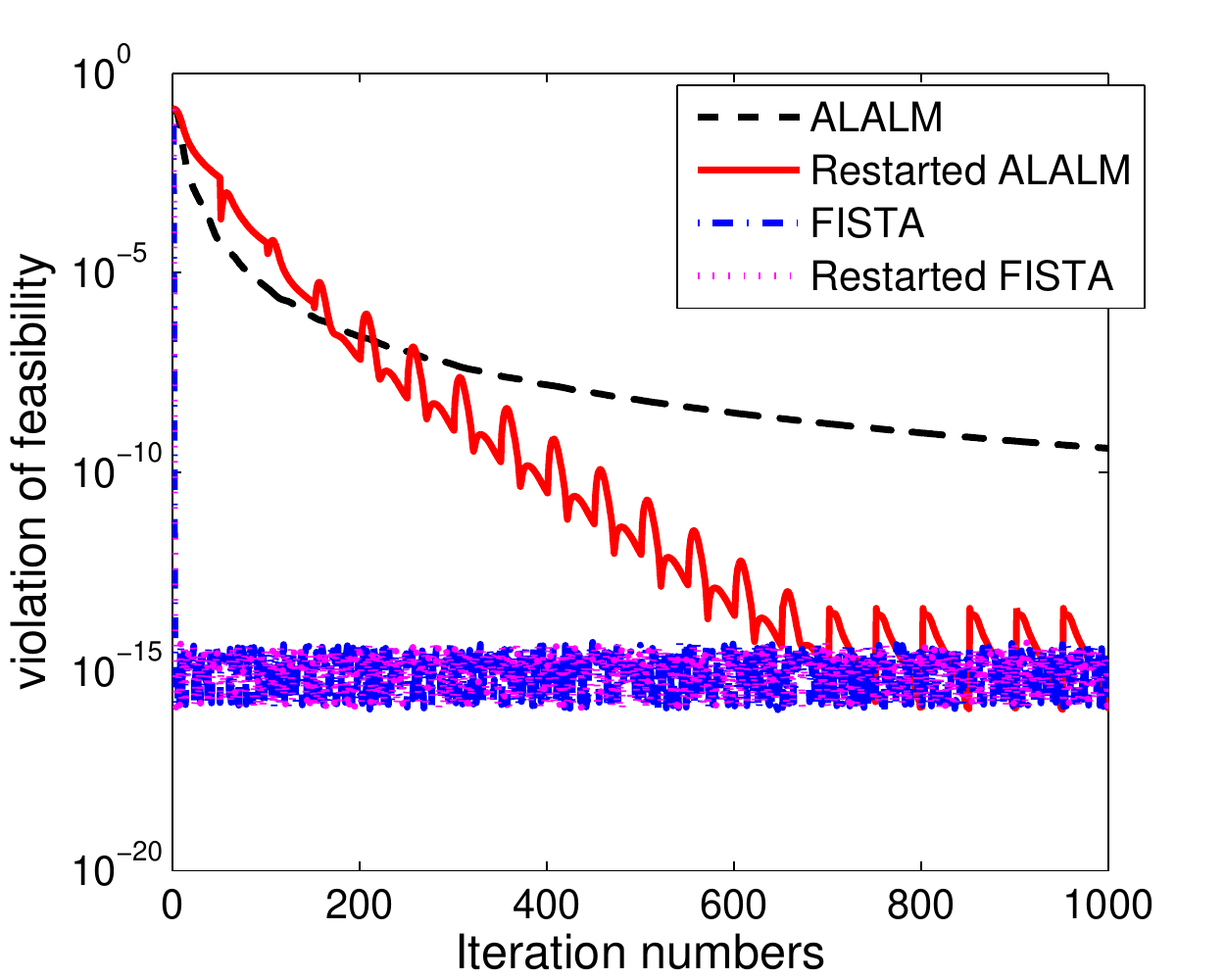} &
\includegraphics[width=0.25\textwidth]{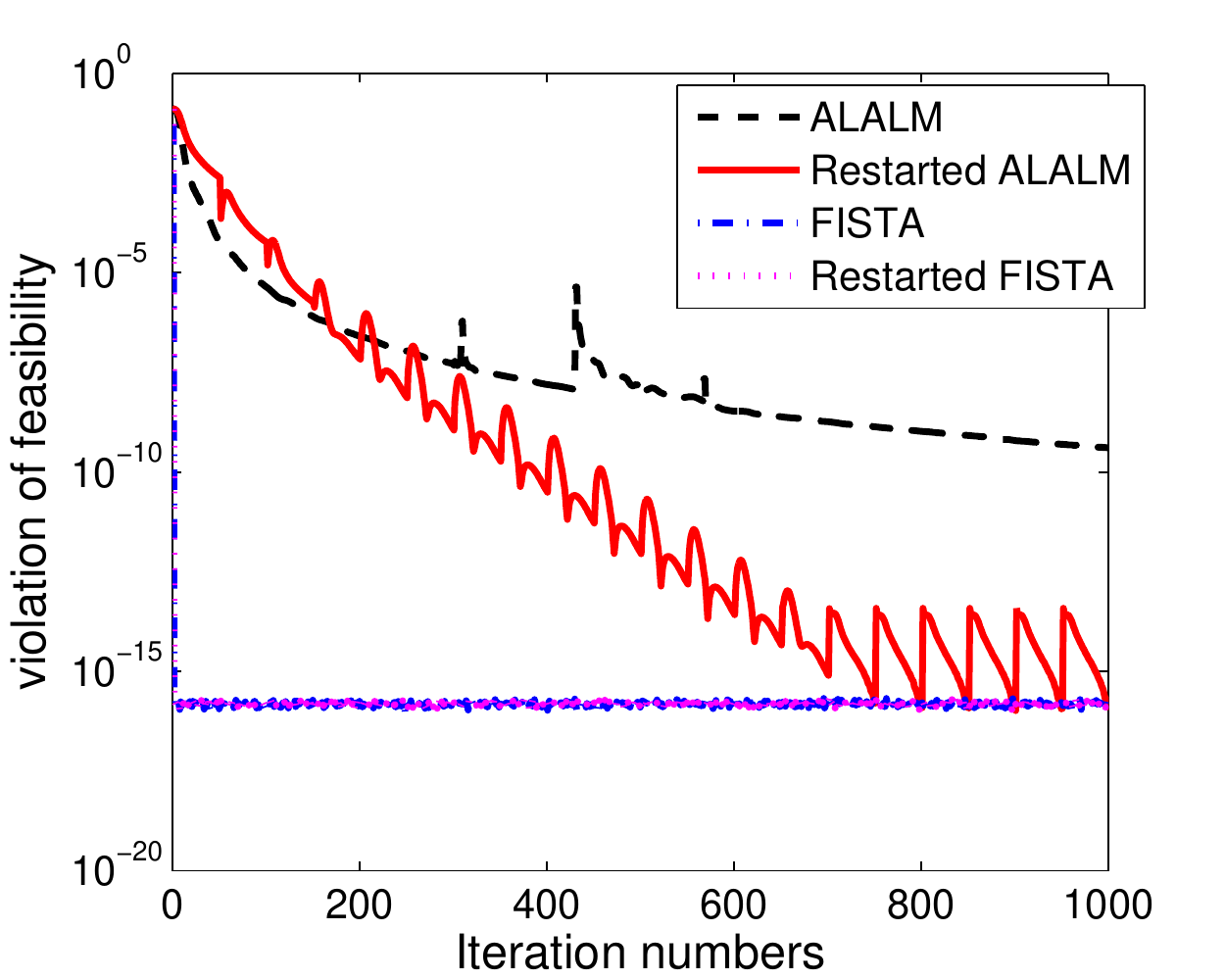} &
\includegraphics[width=0.25\textwidth]{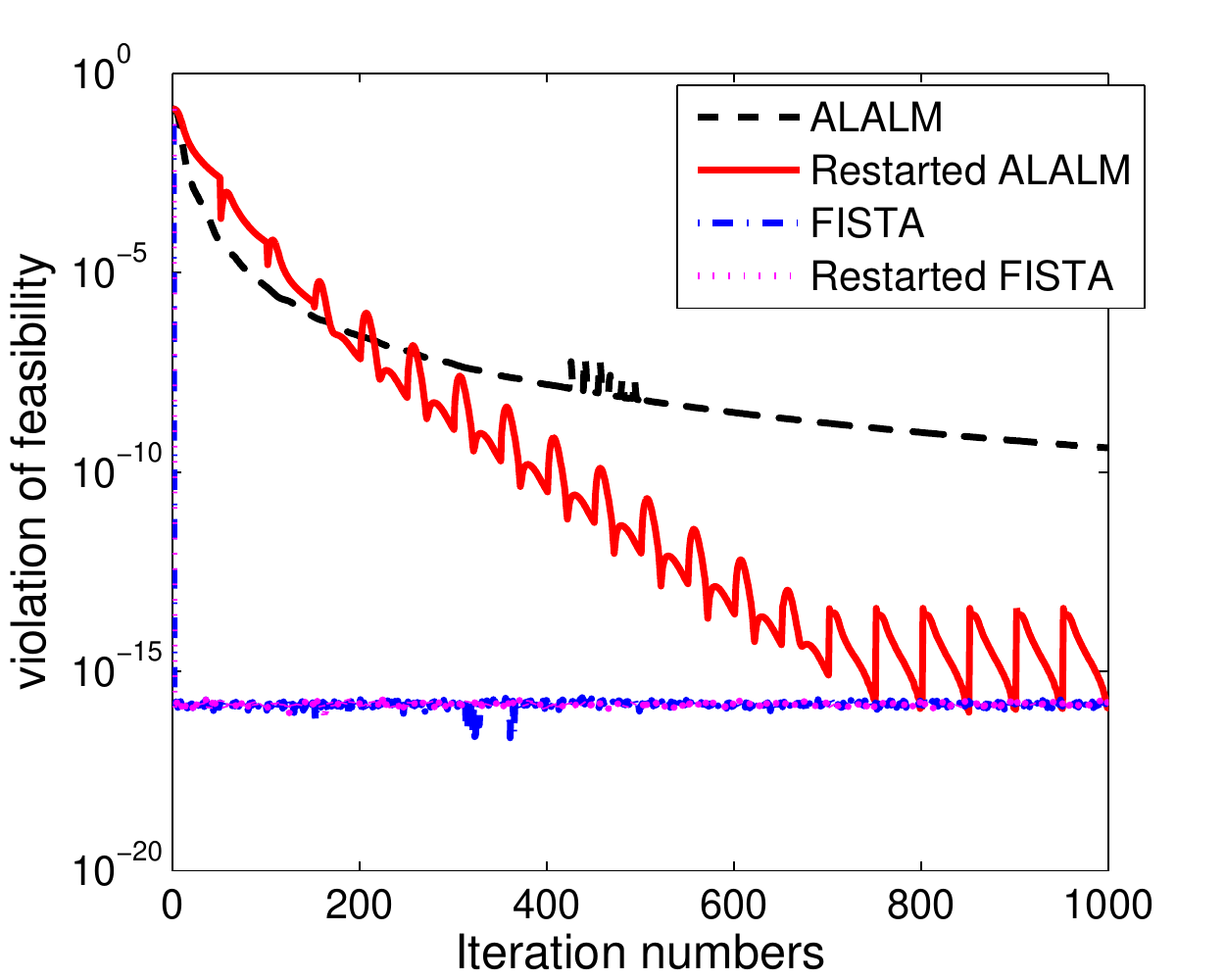}
\end{tabular}
\end{center}
\caption{Results by FISTA \cite{FISTA2009} and ALALM (Algorithm \ref{alg:alalm} with adaptive parameters) on solving \eqref{eq:quadprog} where $A=[B,I]$ and $B$ is generated according to standard Gaussian distribution. Subproblems for both methods are solved to a tolerance specified by \protect\UseVerb{subtol}. First row: the absolute value of objective value minus the optimal value $|F(x)-F(x^*)|$; second row: the violation of feasibility $\|Ax-b\|$.}\label{fig:qp-G50}
\end{figure}


\begin{figure}
\begin{center}
\begin{tabular}{ccc}
$\verb|subtol|=10^{-6}$ & $\verb|subtol|=10^{-8}$ & $\verb|subtol|=10^{-10}$\\
\includegraphics[width=0.25\textwidth]{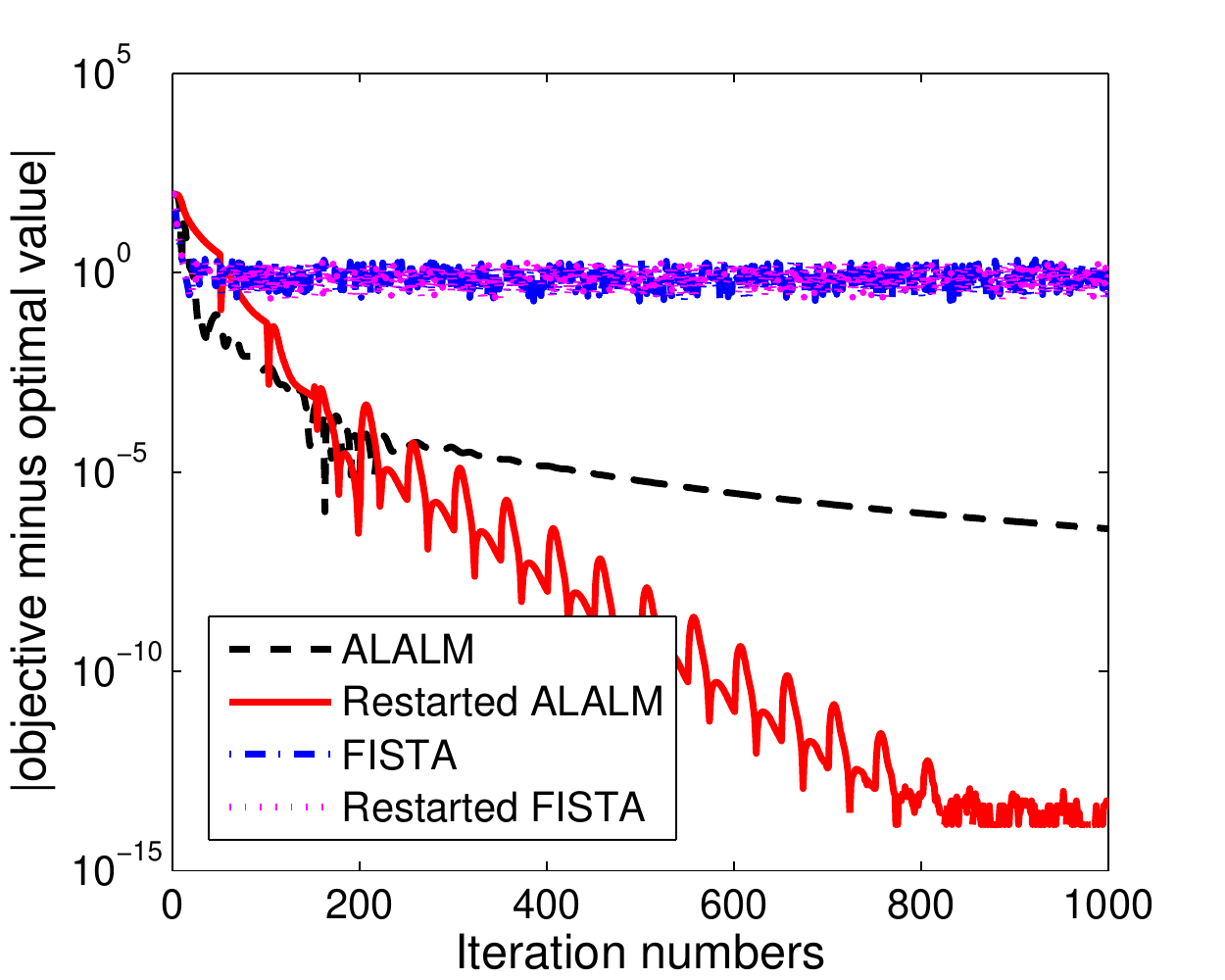} &
\includegraphics[width=0.25\textwidth]{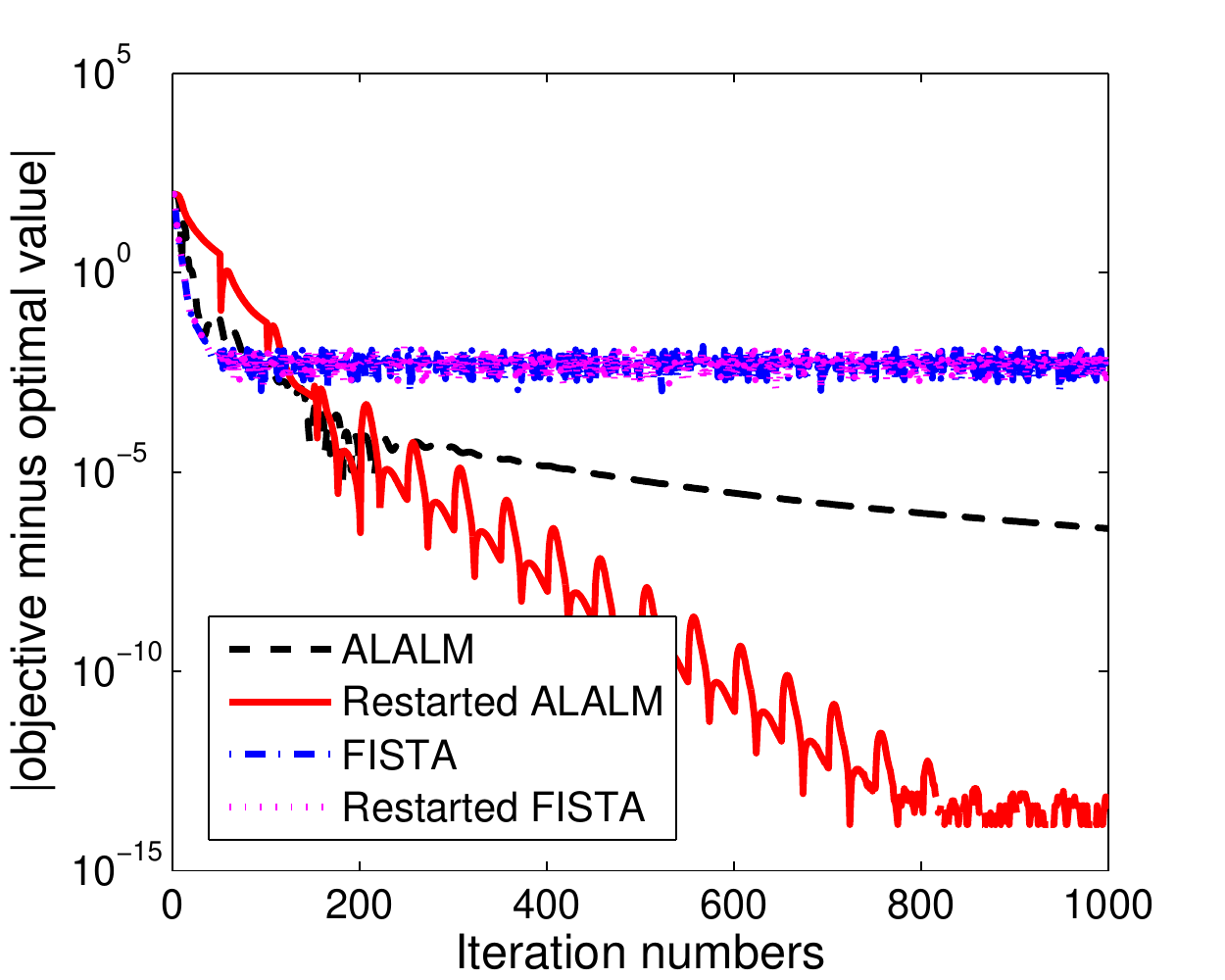} &
\includegraphics[width=0.25\textwidth]{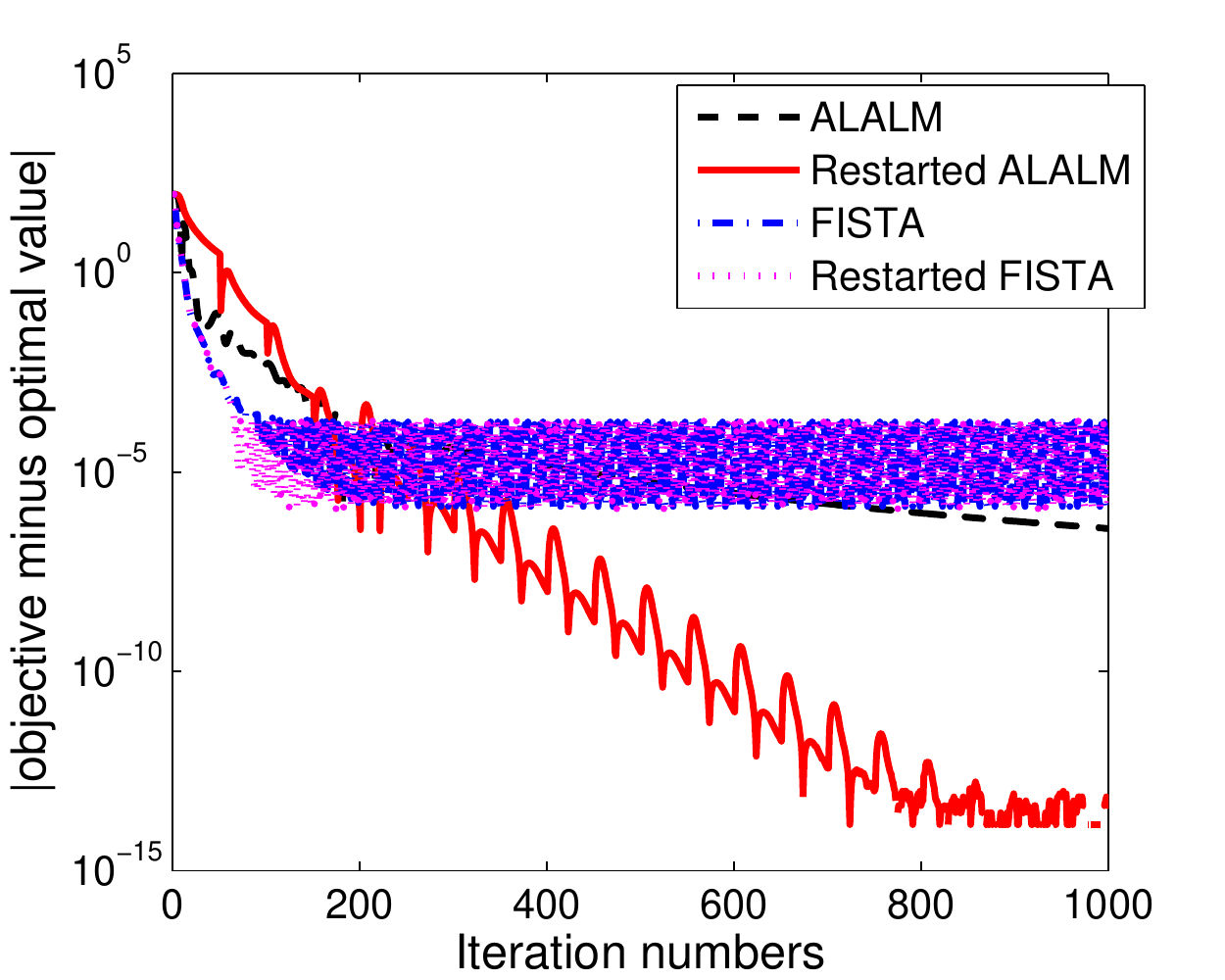}\\
\includegraphics[width=0.25\textwidth]{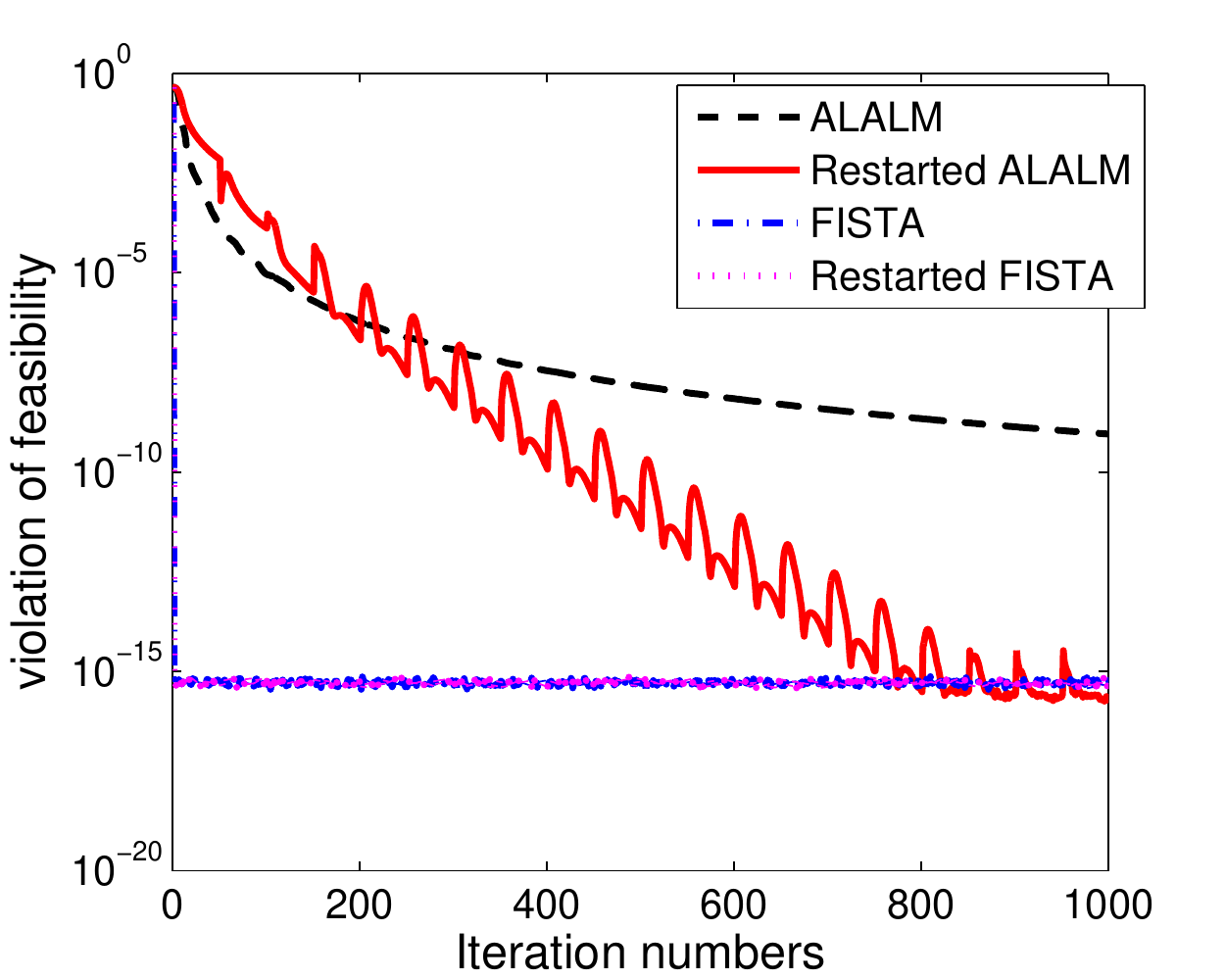} &
\includegraphics[width=0.25\textwidth]{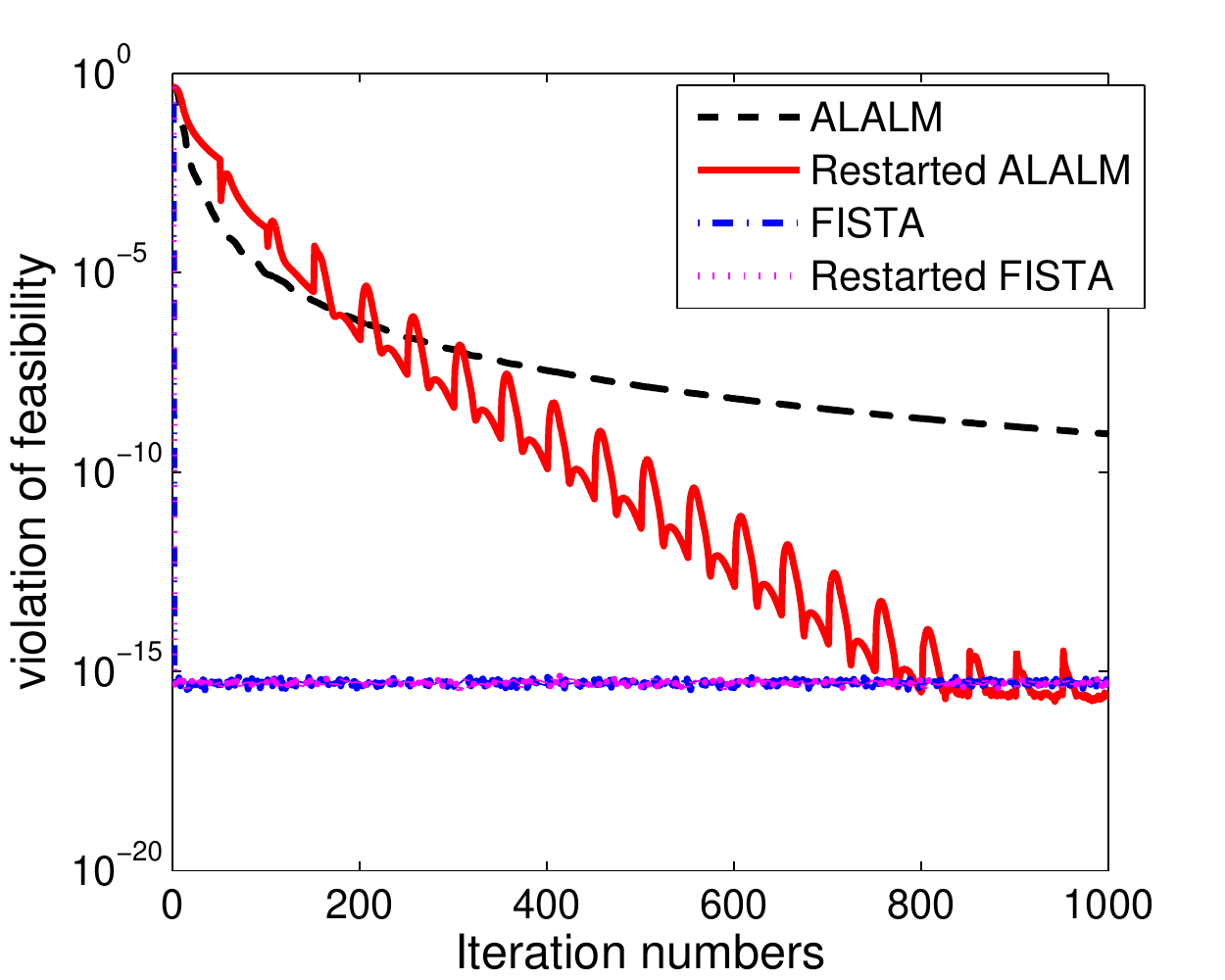} &
\includegraphics[width=0.25\textwidth]{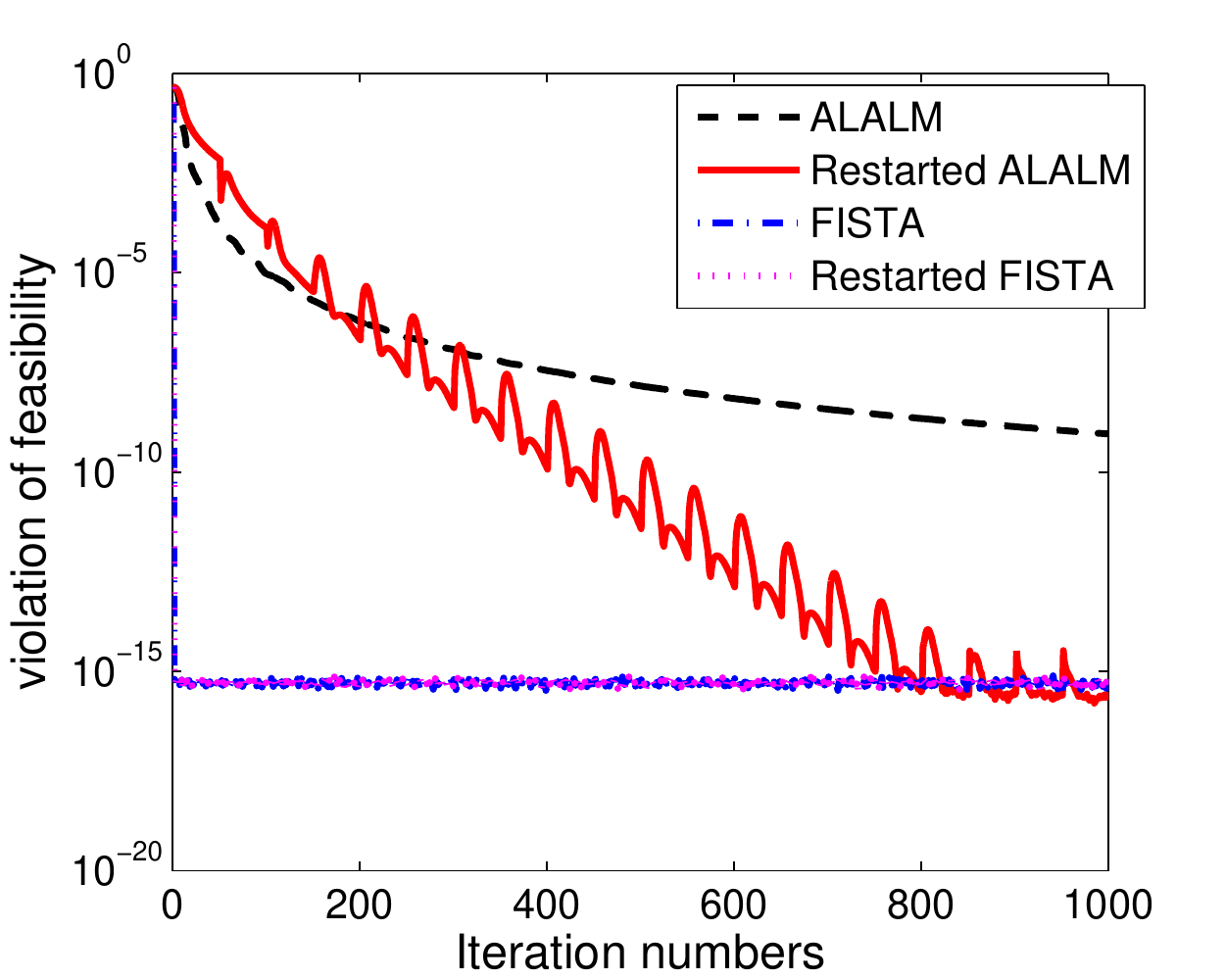}
\end{tabular}
\end{center}
\caption{Results by FISTA \cite{FISTA2009} and ALALM (Algorithm \ref{alg:alalm} with adaptive parameters) on solving \eqref{eq:quadprog} where $A=[B,I]$ and $B$ is generated according to uniform distribution. Subproblems for both methods are solved to a tolerance specified by \protect\UseVerb{subtol}. First row: the absolute value of objective value minus the optimal value $|F(x)-F(x^*)|$; second row: the violation of feasibility $\|Ax-b\|$.}\label{fig:qp-U50}
\end{figure}


\subsection{Image denoising}\label{sec:denois}
In this subsection, we test the accelerated ADMM, i.e., Algorithm \ref{alg:apadmm}, on the total variation regularized image denoising problem:
\begin{equation}\label{eq:denois}
\min_X F(X)=\frac{1}{2}\|X-M\|_F^2+\mu\|\cD X\|_1,
\end{equation}
where $M$ is a noisy two-dimensional image, $\cD$ is a finite difference operator, and $\|Y\|_1=\sum_{i,j}|Y_{ij}|$. Replacing $\cD X$ by $Y$, we can write \eqref{eq:denois} equivalently to 
\begin{equation}\label{eq:denois2}
\min_{X,Y} G(X,Y)=\frac{1}{2}\|X-M\|_F^2+\mu\|Y\|_1, \st \cD X=Y.
\end{equation}

Applying Algorithm \ref{alg:apadmm} to \eqref{eq:denois2} gives the updates:
\begin{subequations}\label{eq:admm-denois}
\begin{align}
&Y^{k+1}=\argmin_Y \mu\|Y\|_1 + \langle\Lambda^k, Y\rangle+\frac{\beta_k}{2}\|Y-\cD X\|_F^2+\frac{1}{2}\|Y-Y^k\|_{P^k}^2,\label{eq:admm-denois-Y}\\
&X^{k+1}=\argmin_X \frac{1}{2}\|X-M\|_F^2 -\langle\Lambda^k, \cD X \rangle+\frac{\beta_k}{2}\|Y-\cD X\|_F^2+\frac{1}{2}\|X-X^k\|_{Q^k}^2,\label{eq:admm-denois-X}\\
&\Lambda^{k+1}=\Lambda^k-\gamma_k(\cD X^{k+1}-Y^{k+1}).
\end{align}
\end{subequations}
We test the algorithm with four sets of parameters, leading to four different methods listed below: 
\begin{itemize}
\item\text{Nonaccelerated ADMM:} $\beta_k=\gamma_k=10,\, P^k=0,\, Q^k=0,\,\forall k$;
\item\text{Accelerated ADMM:} $\beta_k=\gamma_k=\frac{k+1}{2\|\cD\|_2^2},\, P^k=0,\, Q^k=0,\,\forall k$;
\item\text{Nonacclerated Linearized ADMM:} $\beta_k=\gamma_k=\frac{1}{2\|\cD\|_2^2},\, P^k=0,\, Q^k=\frac{I}{2}-\frac{\cD^\top\cD}{2\|\cD\|_2^2},\,\forall k$;
\item\text{Accelerated Linearized ADMM:} $\beta_k=\gamma_k=\frac{k+1}{20\|\cD\|_2^2},\, P^k=0,\, Q^k=\frac{(k+1)I}{20}-\frac{(k+1)\cD^\top\cD}{20\|\cD\|_2^2},\,\forall k$.
\end{itemize}
With $P^k=0$,  the solution of \eqref{eq:admm-denois-Y} can be written analyticly by using the soft thresholding or shrinkage. We assume periodic boundary condition, and thus with $Q^k=0$, the solution of \eqref{eq:admm-denois-X} can be easily obtained by solving a linear system that involves one two-dimensional fast Fourier transform (FFT2) and one inverse FFT2 and some componentwise division \cite{wang2008new}. For the linearized ADMM, it is easy to write closed form solutions for both $X$ and $Y$ subproblems. We compare Algorithm \ref{alg:apadmm} with the above four settings to the accelerated primal-dual method in \cite{chambolle2011first}, which we call Chambolle-Pock method by authors' name. As shown in \cite{GXZ-RPDCU2016}, Chambolle-Pock method is equivalent to linearized ADMM applied to the dual reformulation of \eqref{eq:denois}. It iteratively performs the updates:
\begin{subequations}
\begin{align}
&Z^{k+1}=\argmin_{|Z_{ij}|\le 1,\forall i,j} \|Z-Z^k-\sigma_k \cD \bar{X}^k\|_F^2,\\
&X^{k+1}=\argmin_X \frac{\tau_k}{2\mu}\|X-X^k\|_F^2+\frac{1}{2}\|X-X^k+\tau_k\cD^* Z^{k+1}\|_F^2,\\
&\bar{X}^{k+1}=X^{k+1}+\theta_k(X^{k+1}-X^k)
\end{align}
\end{subequations}
with $\bar{X}^1=X^1$, $\tau_1\sigma_1\|\cD\|_2^2\le 1$, and the parameters set to 
$$\theta_k=\frac{1}{\sqrt{1+2\gamma\tau_k}},\, \tau_{k+1}=\theta_k\tau_k,\, \sigma_{k+1}=\frac{\sigma_k}{\theta_k}\,\forall k.$$
We set $\tau_1=\sigma_1=1/\|\cD\|_2$ and $\gamma=0.35/\mu$ as suggested in \cite{chambolle2011first}. 

In this test, we use the Cameraman image shown in Figure \ref{fig:cameraman}, and we add 10\% Gaussian noise. The regularization parameter is set to $\mu=0.04$. For Algorithm \ref{alg:apadmm}, we report the objective value of \eqref{eq:denois2} and the violation of feasibility and also the objective value of \eqref{eq:denois}, and for Chambolle-Pock method we only report the objective value of \eqref{eq:denois} since it solves the dual problem and does not guarantee the feasibility of \eqref{eq:denois2}. Figure \ref{fig:denois} plots the results in terms of iteration numbers. Since the linearized ADMM and Chambolle-Pock methods has lower iteration complexity than the nonlinearized ADMM, we also plot the results in terms of running time. From the figure, we see that Algorithm \ref{alg:apadmm} with adaptive parameters performs significantly better than that with fixed parameters. The Chambolle-Pock method decreases the objective fastest in the beginning, and later the accelerated ADMM with or without linearization catch up and surpass it. 

\begin{figure}
\begin{center}
\begin{tabular}{ccc}
{\small original image} & {\small noisy image} & {\small denoised image}\\
\includegraphics[width=0.25\textwidth]{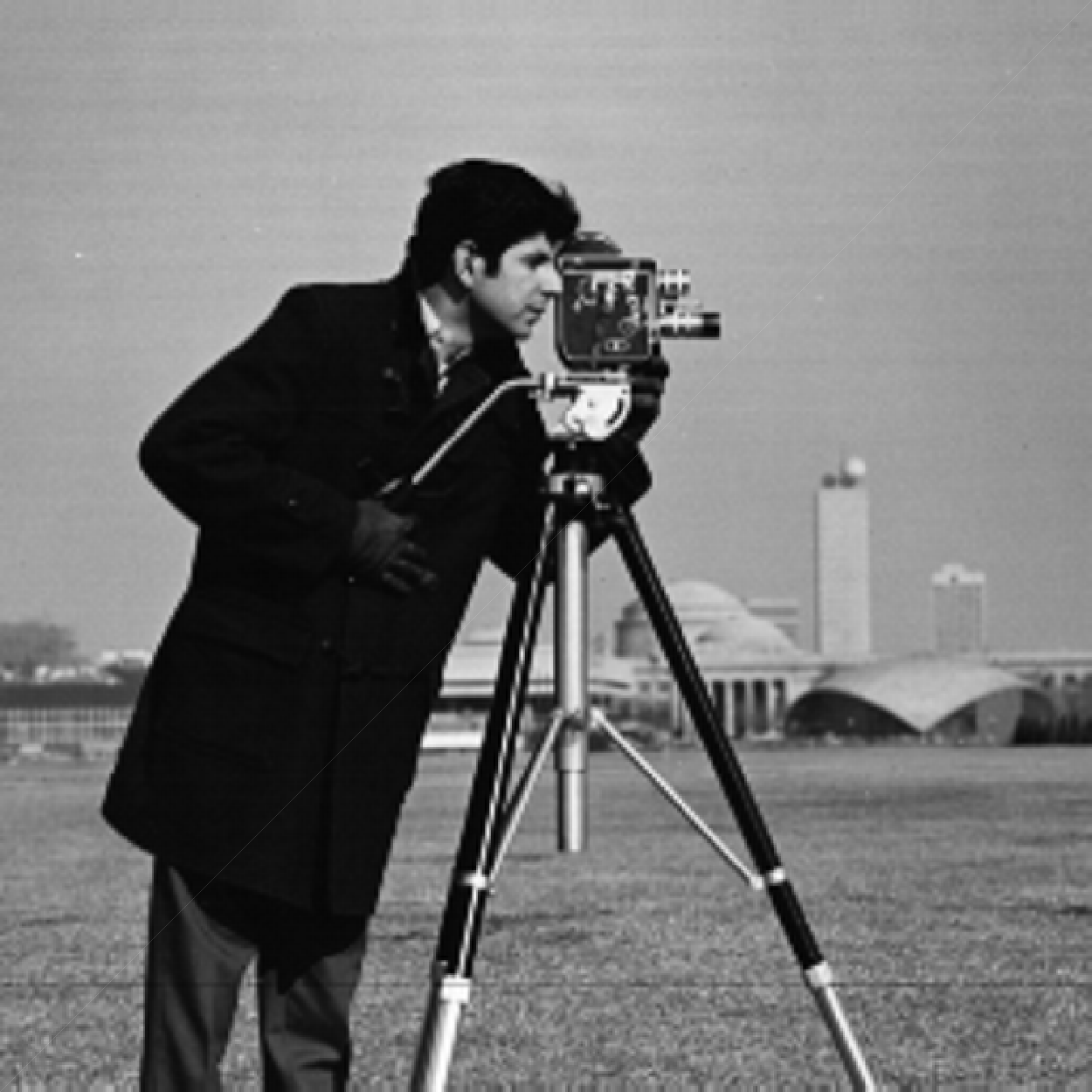} &
\includegraphics[width=0.25\textwidth]{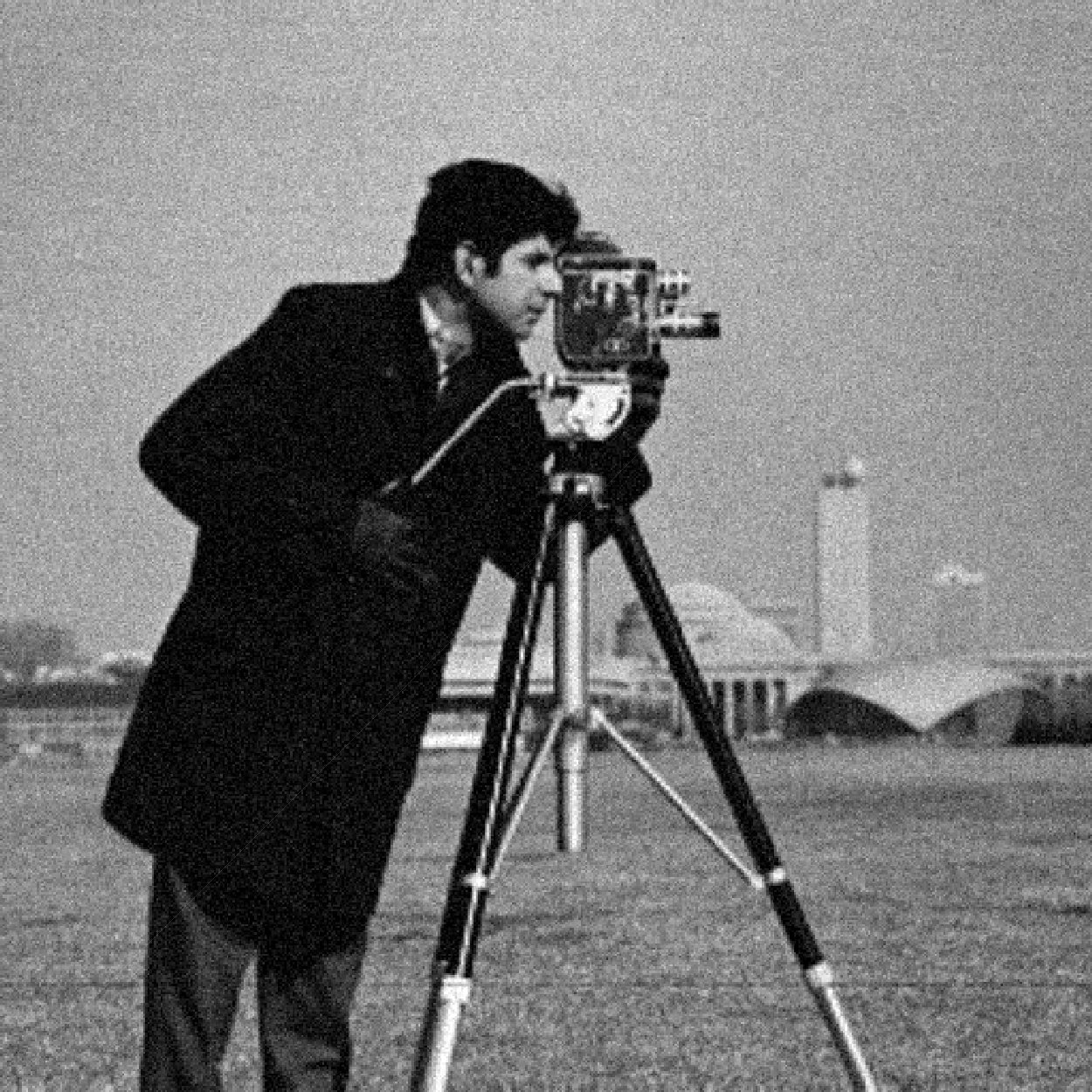} &
\includegraphics[width=0.25\textwidth]{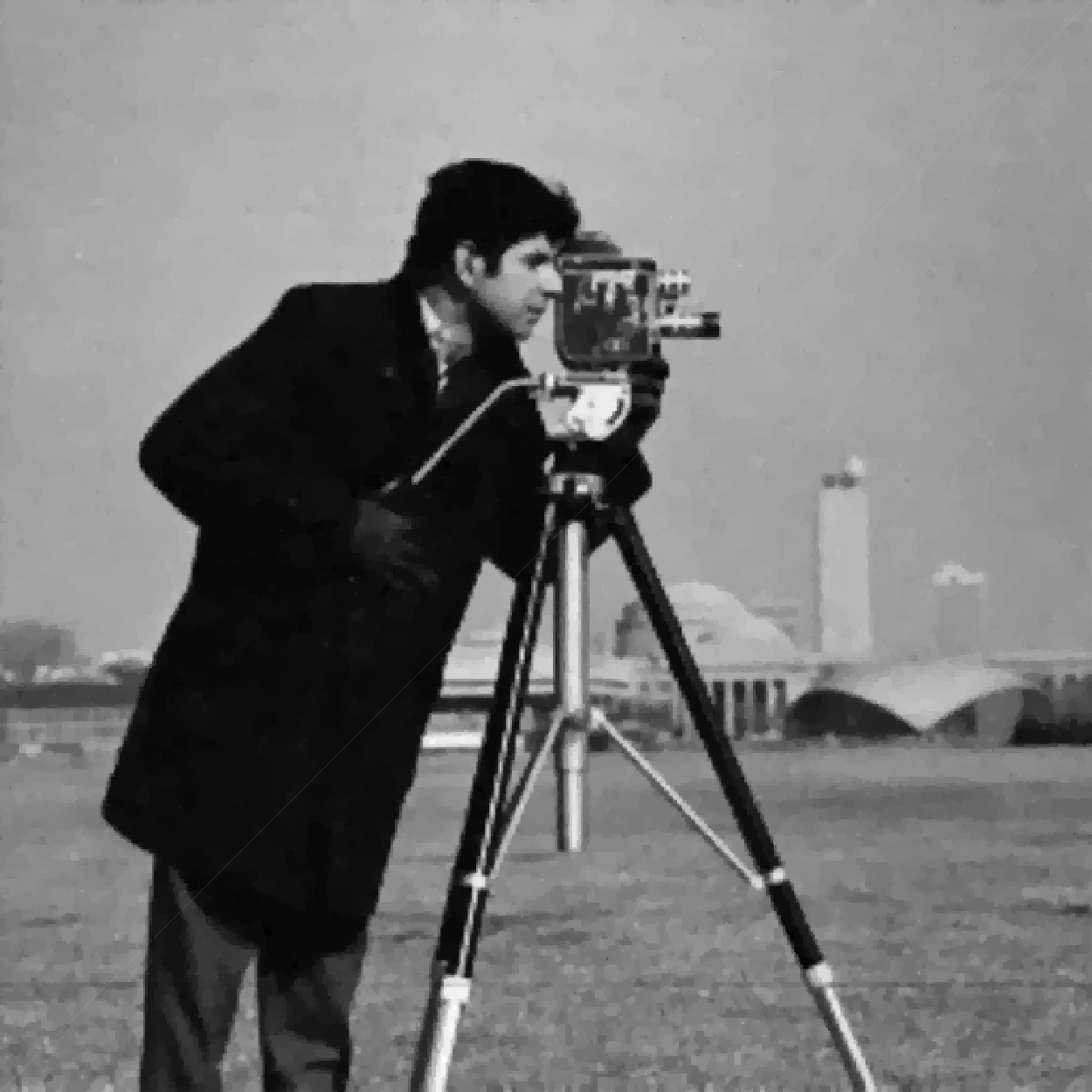}
\end{tabular}
\end{center}
\caption{The Cameraman images. Left: original one; Middle: noisy image with 10\% Gaussian noise, PSNR = 25.62; Right: denoised image by the accelerated ADMM running to 200 iterations, PSNR = 33.29.}\label{fig:cameraman}
\end{figure}

\begin{figure}
\begin{center}
\begin{tabular}{cc}
{\small objective measure of \eqref{eq:denois2}} & {\small feasibility measure of \eqref{eq:denois2}} \\
\includegraphics[width=0.25\textwidth]{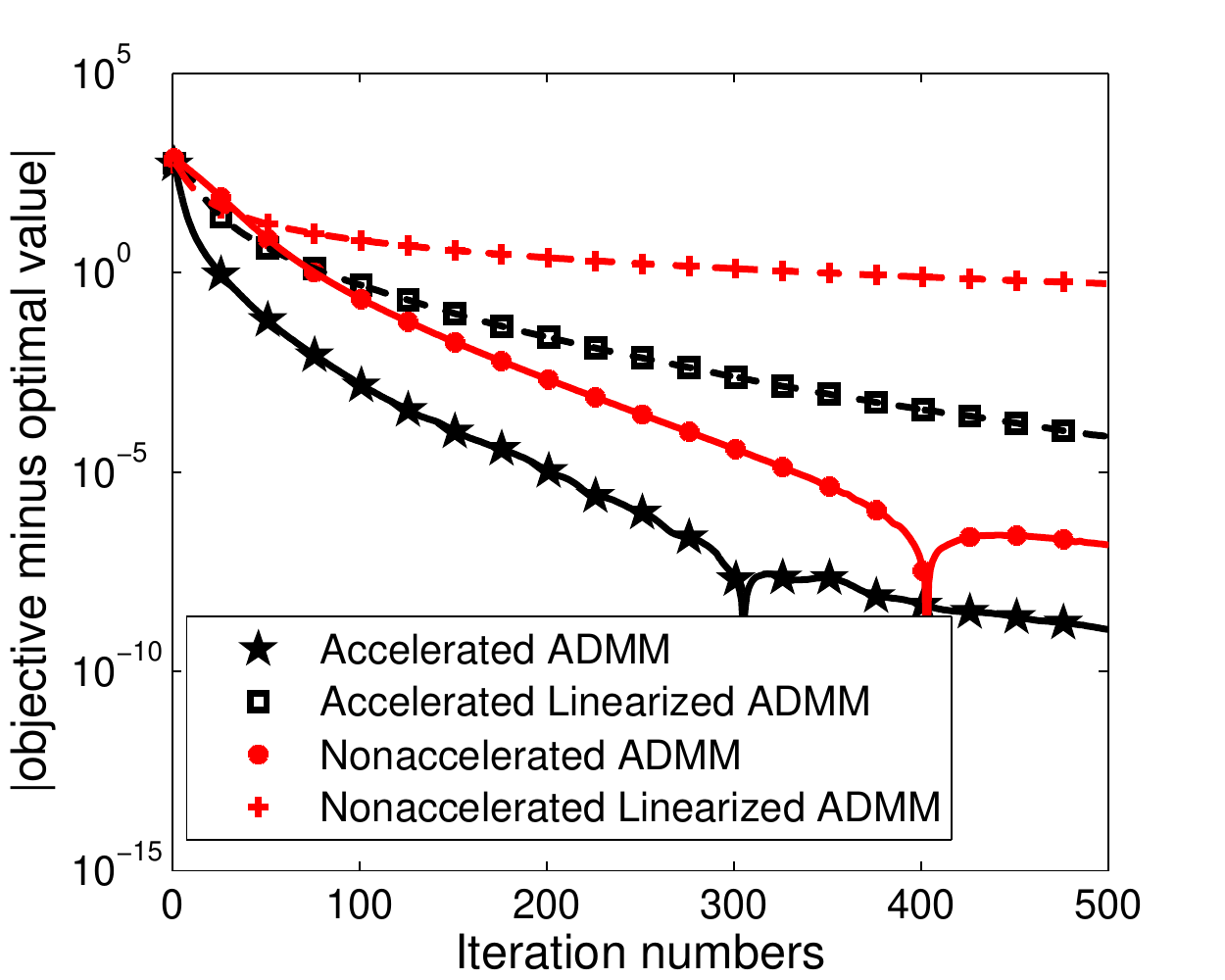} &
\includegraphics[width=0.25\textwidth]{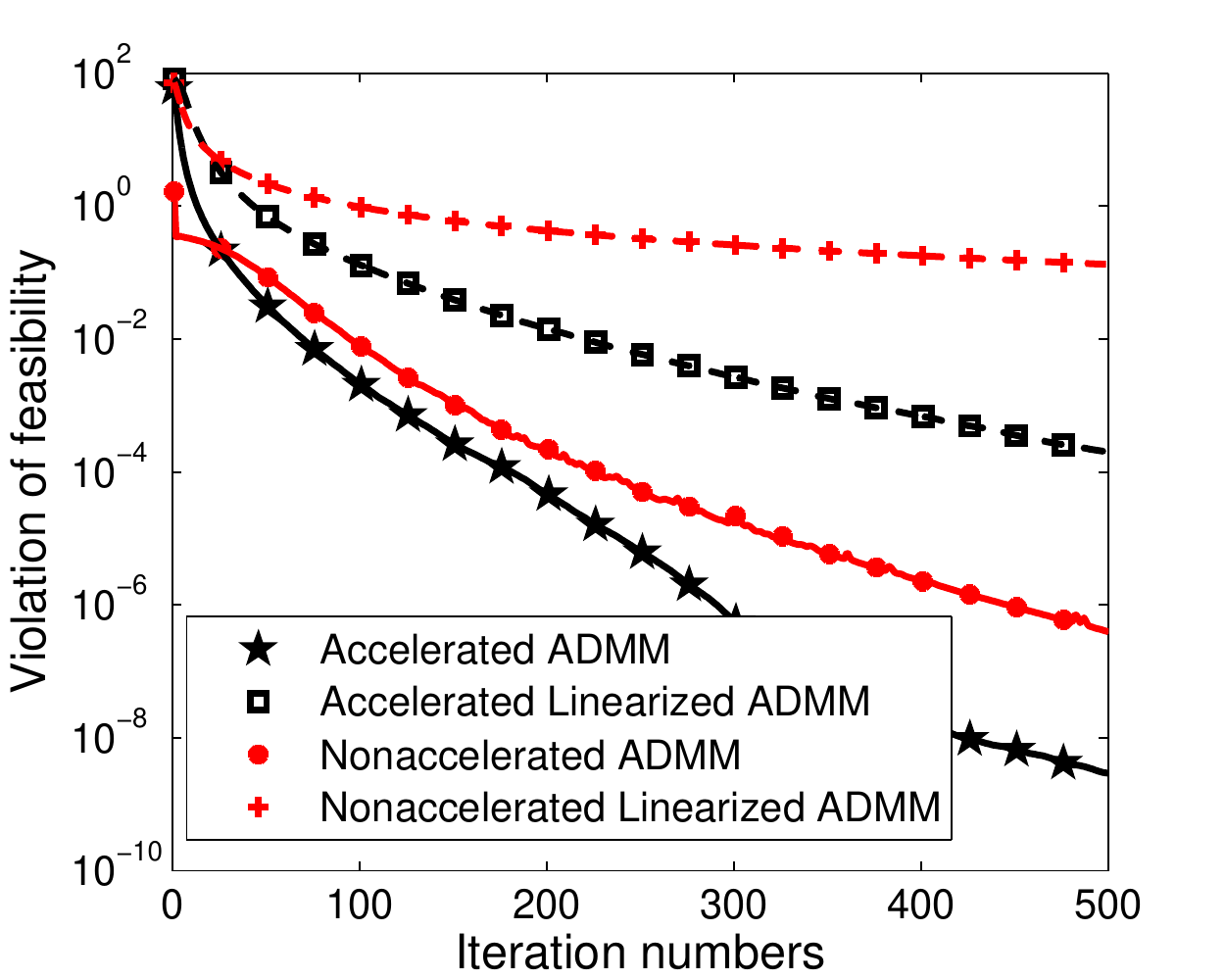}\\
{\small objective measure of \eqref{eq:denois}} & {\small objective measure of \eqref{eq:denois}}\\
\includegraphics[width=0.25\textwidth]{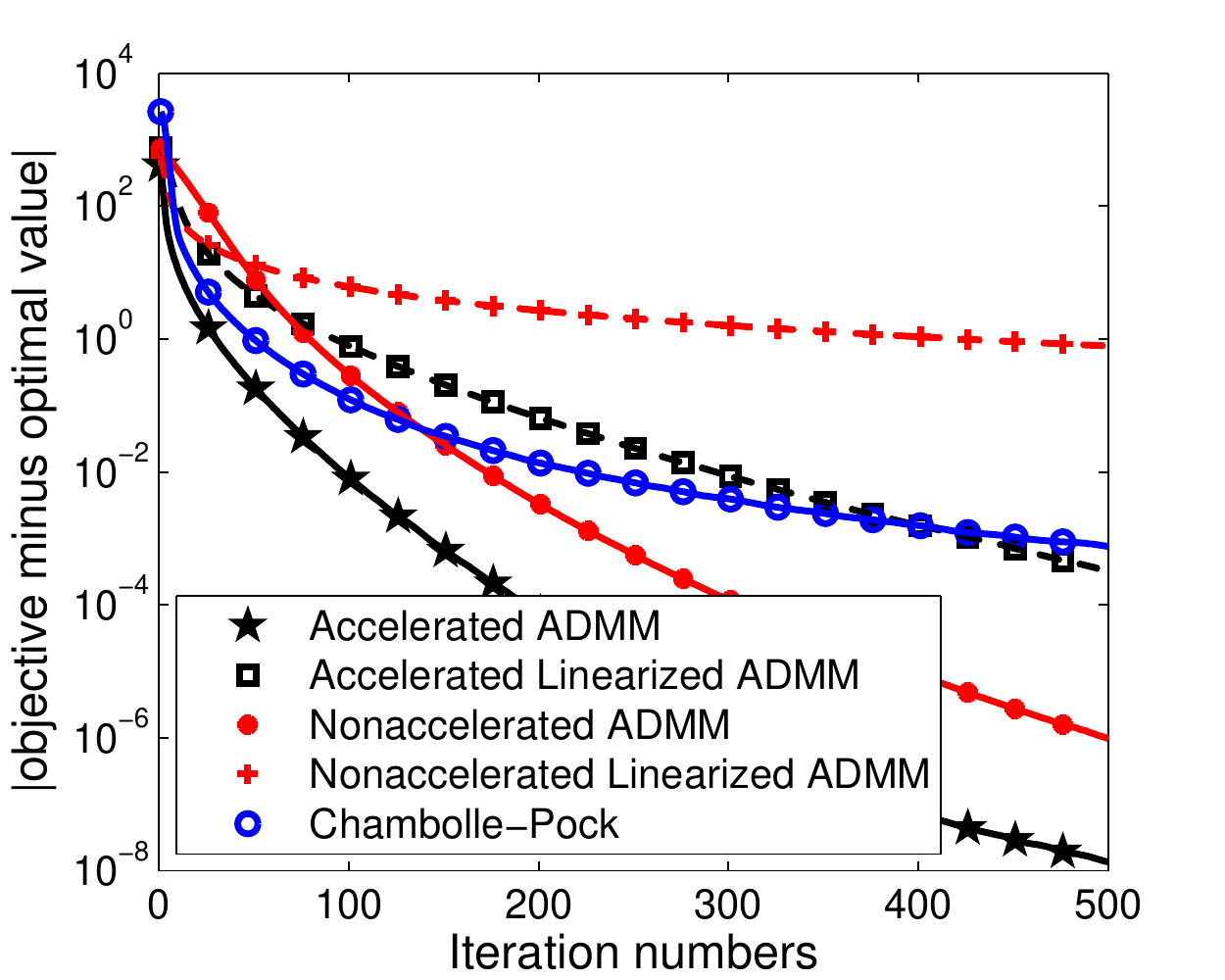}&
\includegraphics[width=0.25\textwidth]{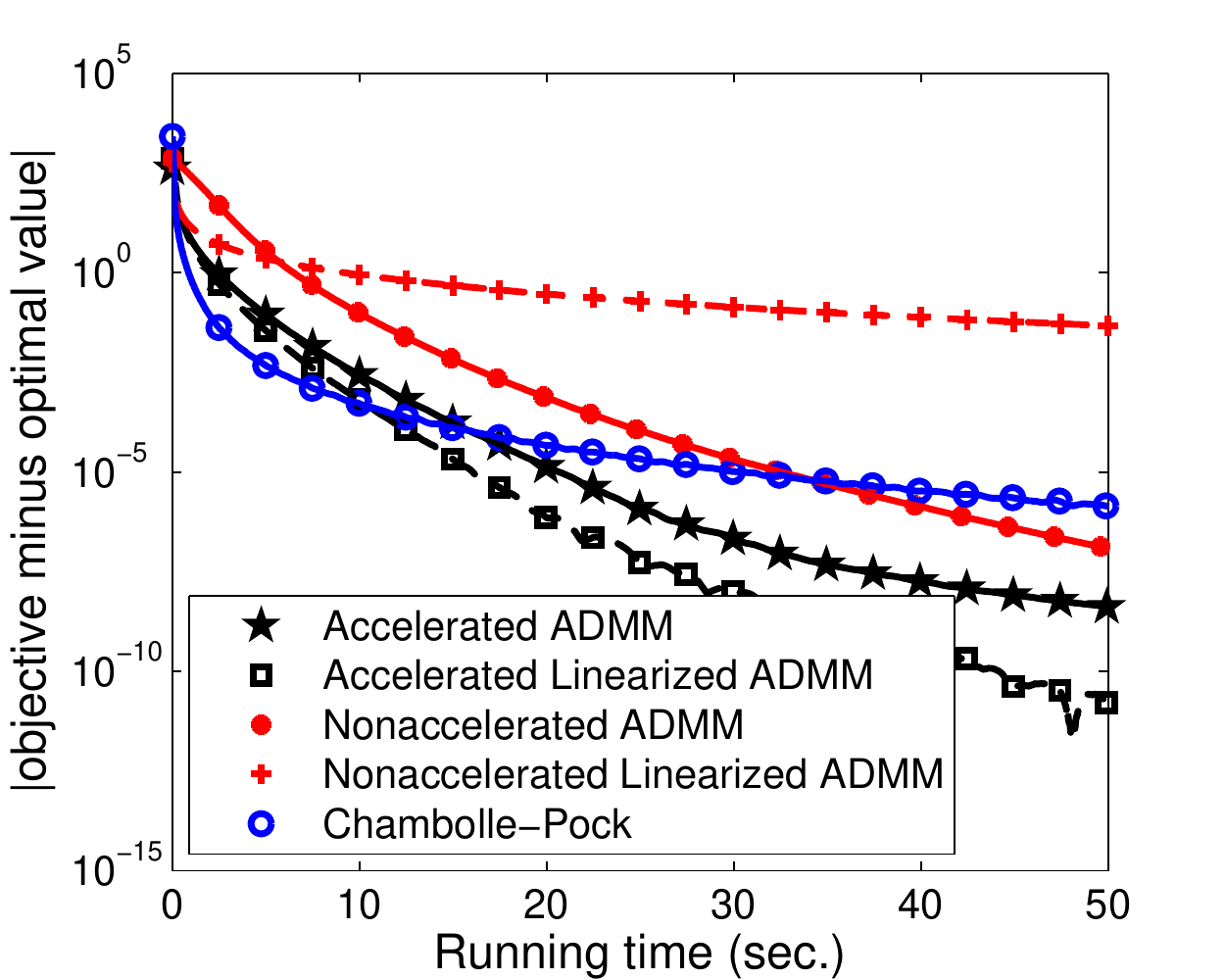}
\end{tabular}
\end{center}
\caption{Results by Algorithm \ref{alg:apadmm} with adaptive parameters (accelerated ADMM) and constant parameters (nonaccelerated ADMM) and also the Chambolle-Pock method on solving \eqref{eq:denois}. Top left: the absolute value of objective of \eqref{eq:denois2} minus optimal value $|G(X,Y)-G(X^*,Y^*)|$; Top right: the violation of feasibility of \eqref{eq:denois2} $\|\cD X-Y\|_F$; Bottom left: the absolute value of objective of \eqref{eq:denois} minus optimal value $|F(X)-F(X^*)|$ in terms of iteration; Bottom right: the absolute value of objective of \eqref{eq:denois} minus optimal value $|F(X)-F(X^*)|$ in terms of running time.}\label{fig:denois}
\end{figure}

\subsection{Elastic net regularized support vector machine}
We test Algorithm \ref{alg:apadmm} on the elastic net regularized support vector machine problem
\begin{equation}\label{eq:ensvm}
\min_x F(x)=\frac{1}{m}\sum_{i=1}^m [1-b_ia_i^\top x]_+ +\mu_1\|x\|_1+\frac{\mu_2}{2}\|x\|^2,
\end{equation}
where $[c]_+=\max(0,c)$, $\{(a_i,b_i)\}_{i=1}^m$ are the samples in $p$-dimensional space, and $b_i\in\{+1,-1\}$ is the label of the $i$th sample. Let $A=[a_1,\ldots,a_m]\in\RR^{p\times m}$ and replace $1-b_ia_i^\top x$ by $y_i$ for all $i$. We obtain the equivalent formulation:
\begin{equation}\label{eq:ensvm2}
\min_x G(x,y)=\frac{1}{m}e^\top [y]_+ +\mu_1\|x\|_1+\frac{\mu_2}{2}\|x\|^2, \st Bx+y=e,
\end{equation}
where $e$ is the vector with all ones, and $B=\text{Diag}(b)A$.

The data is generated in the same way as that in \cite{xu2015HHSVM}. One half of the samples belong to ``+1'' class and the other to ``-1'' class. Each sample in ``+1'' class is generated according to Gaussian distribution $\cN(u,\Sigma)$, and each sample in ``-1'' class follows  $\cN(-u,\Sigma)$. The mean vector and variance matrix are set to
$$u=\left[\begin{array}{l}E_{s\times 1}\\0_{(p-s)\times 1}\end{array}\right],\quad \Sigma=\left[\begin{array}{cc}\rho E_{s\times s}+\rho I_{s\times s} & 0_{s\times (p-s)}\\ 0_{(p-s)\times s} & I_{(p-s)\times (p-s)}\end{array}\right],$$
where $E_{s\times s}$ is an $s\times s$ matrix with all ones, $s$ is the number of features that are related to classification, and $\rho\in [0,1]$ measures the correlation of the features (the larger it is, the harder the problem is). In the test, we set
$m = 100,\, p = 500,\, s = 50, \rho = 0.5$ and $\mu_1=\mu_2=0.01$.

Applying Algorithm \ref{alg:apadmm} to \eqref{eq:ensvm2}, we iteratively perform the updates:
\begin{subequations}\label{eq:admm-ensvm}
\begin{align}
&y^{k+1}=\argmin_y \frac{1}{m}e^\top [y]_+ -\langle \lambda^k, y\rangle +\frac{\beta_k}{2}\|Bx^k+y-e\|^2+\frac{1}{2}\|y-y^k\|_{P^k}^2,\\
&x^{k+1}=\argmin_x \mu_1\|x\|_1+\frac{\mu_2}{2}\|x\|^2-\langle\lambda^k, Bx\rangle+\frac{\beta_k}{2}\|Bx+y^{k+1}-e\|^2+\frac{1}{2}\|x-x^k\|_{Q^k}^2,\label{eq:admm-ensvm-x}\\
&\lambda^{k+1}=\lambda^k-\gamma_k(Bx^{k+1}+y^{k+1}-e).
\end{align}
\end{subequations}
Again, we test two sets of parameters. The first one fixes the parameters during all iterations, and the second one adapts the parameters. Since the coexistence of $\ell_1$-norm and the least squares term makes \eqref{eq:admm-ensvm-x} difficult to solve, we choose $Q^k$ to cancel the term $x^\top B^\top Bx$, i.e., we linearize the augmented term. Specifically, we set the parameters in the same way as the previous test:
\begin{itemize}
\item Nonaccelerated Linearized ADMM: $\beta_k=\gamma_k=\frac{1}{2\|B\|_2^2},\, P^k=0,\, Q^k=\frac{I}{2}-\frac{B^\top B}{2\|B\|_2^2},\,\forall k$;
\item Accelerated Linearized ADMM: $\beta_k=\gamma_k=\frac{\mu_2(k+1)}{20\|B\|_2^2},\, P^k=0,\, Q^k=\frac{\mu_2(k+1)I}{20}-\frac{\mu_2(k+1)B^\top B}{20\|B\|_2^2},\,\forall k$.
\end{itemize}
We also compare the linearized ADMM to the classic ADMM without linearization, which introduces another variable $z$ to split $x$ from the $\ell_1$-norm and solves the problem
\begin{equation}\label{eq:ensvm3}
\min_x \frac{1}{m}e^\top [y]_+ +\mu_1\|z\|_1+\frac{\mu_2}{2}\|x\|^2, \st Bx+y=e,\, x=z.
\end{equation}
We use the code from \cite{ye2011efficient} to solve \eqref{eq:ensvm3} and tune its parameters as best as we can.

Similar to the previous test, we measure the objective value and feasibility of \eqref{eq:ensvm2} given by the linearized ADMM and the objective value of \eqref{eq:ensvm} for all three methods. Figure \ref{fig:svm} plots the results, from which we see that the accelerated linearized ADMM performs significantly better than the nonaccelerated counterpart, and the latter is comparable to the classic nonlinearized ADMM.

\begin{figure}
\begin{center}
\begin{tabular}{ccc}
{\small objective measure of \eqref{eq:ensvm2}} & {\small feasibility measure of \eqref{eq:ensvm2}} & {\small objective measure of \eqref{eq:ensvm}}\\
\includegraphics[width=0.25\textwidth]{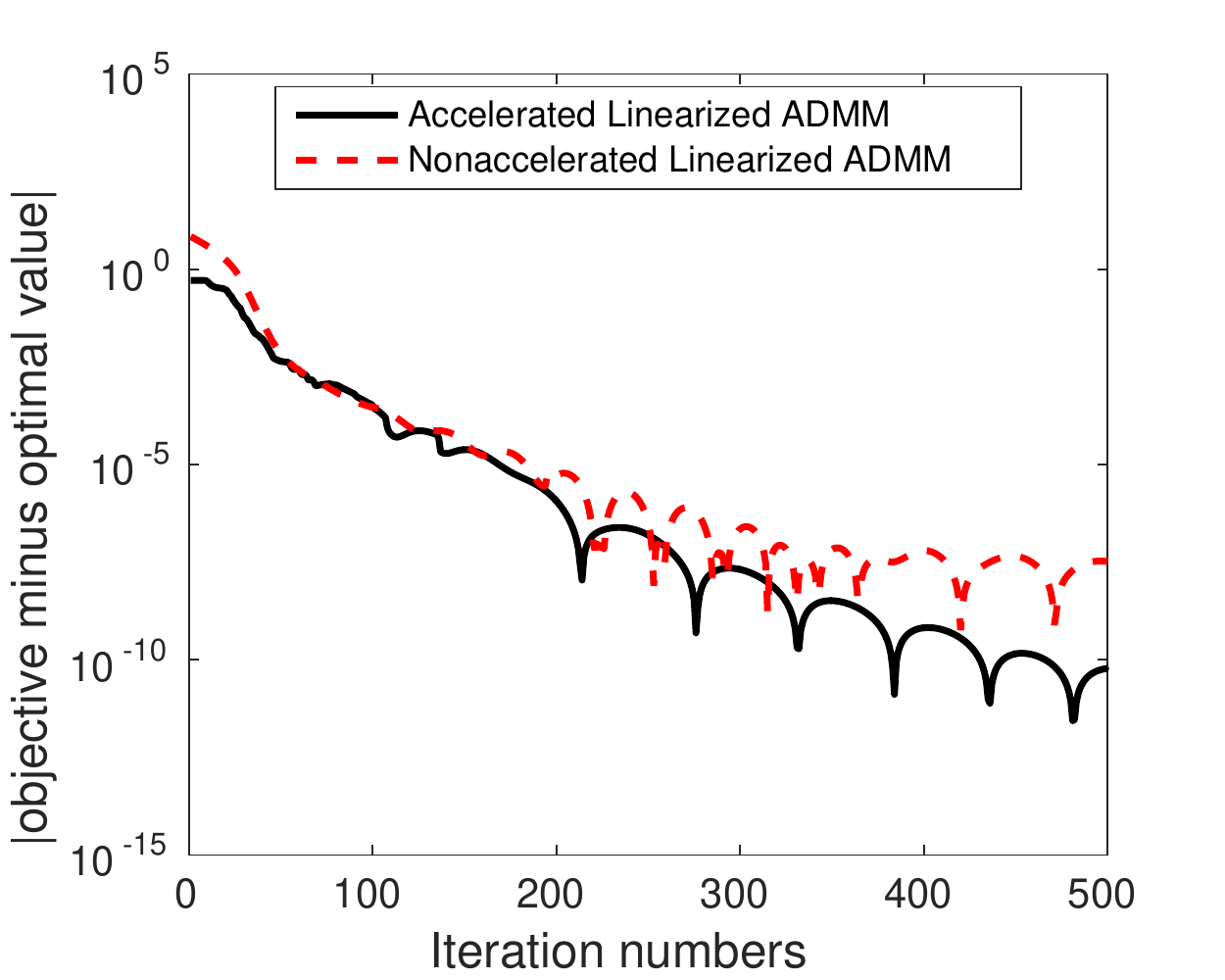} &
\includegraphics[width=0.25\textwidth]{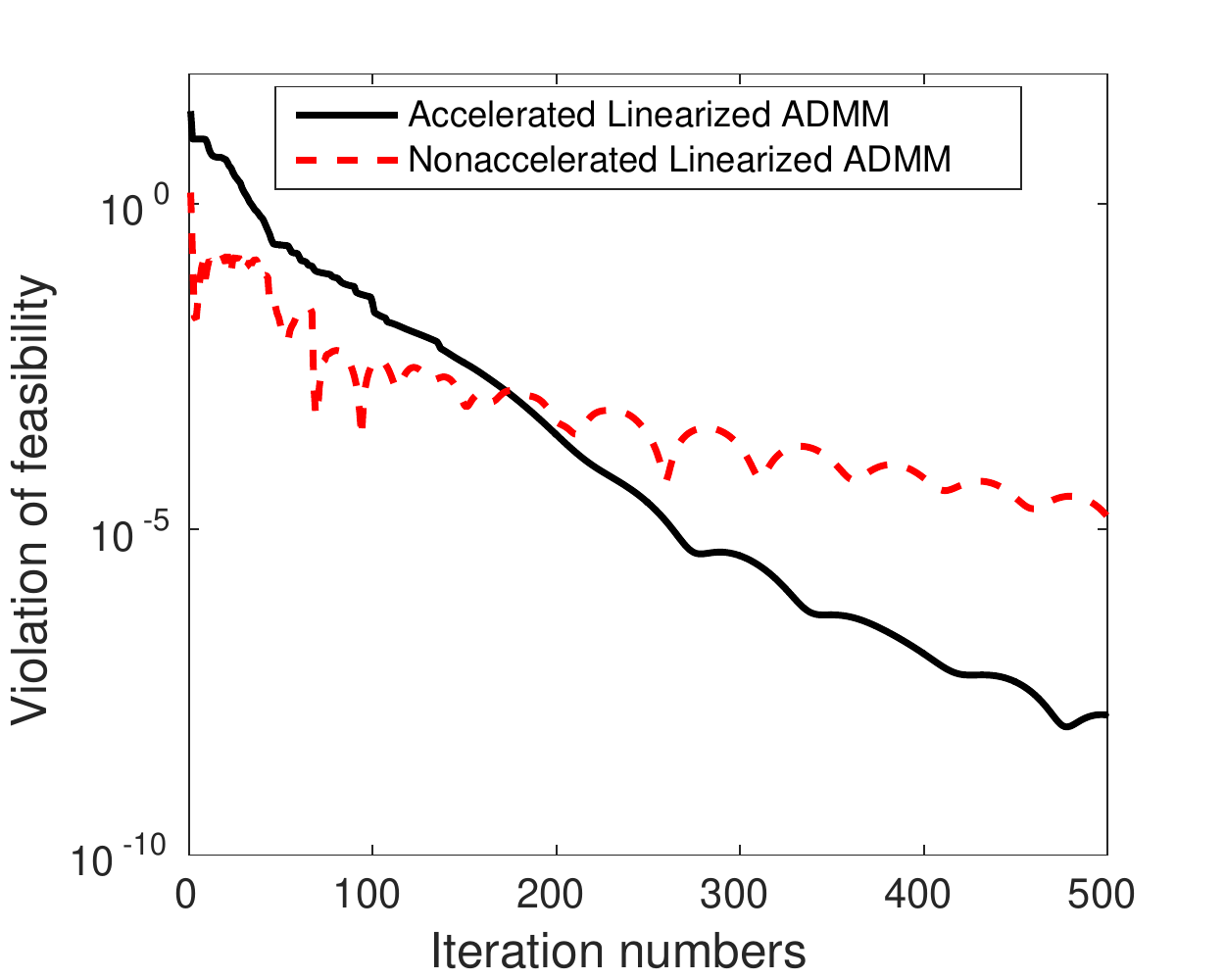} &
\includegraphics[width=0.25\textwidth]{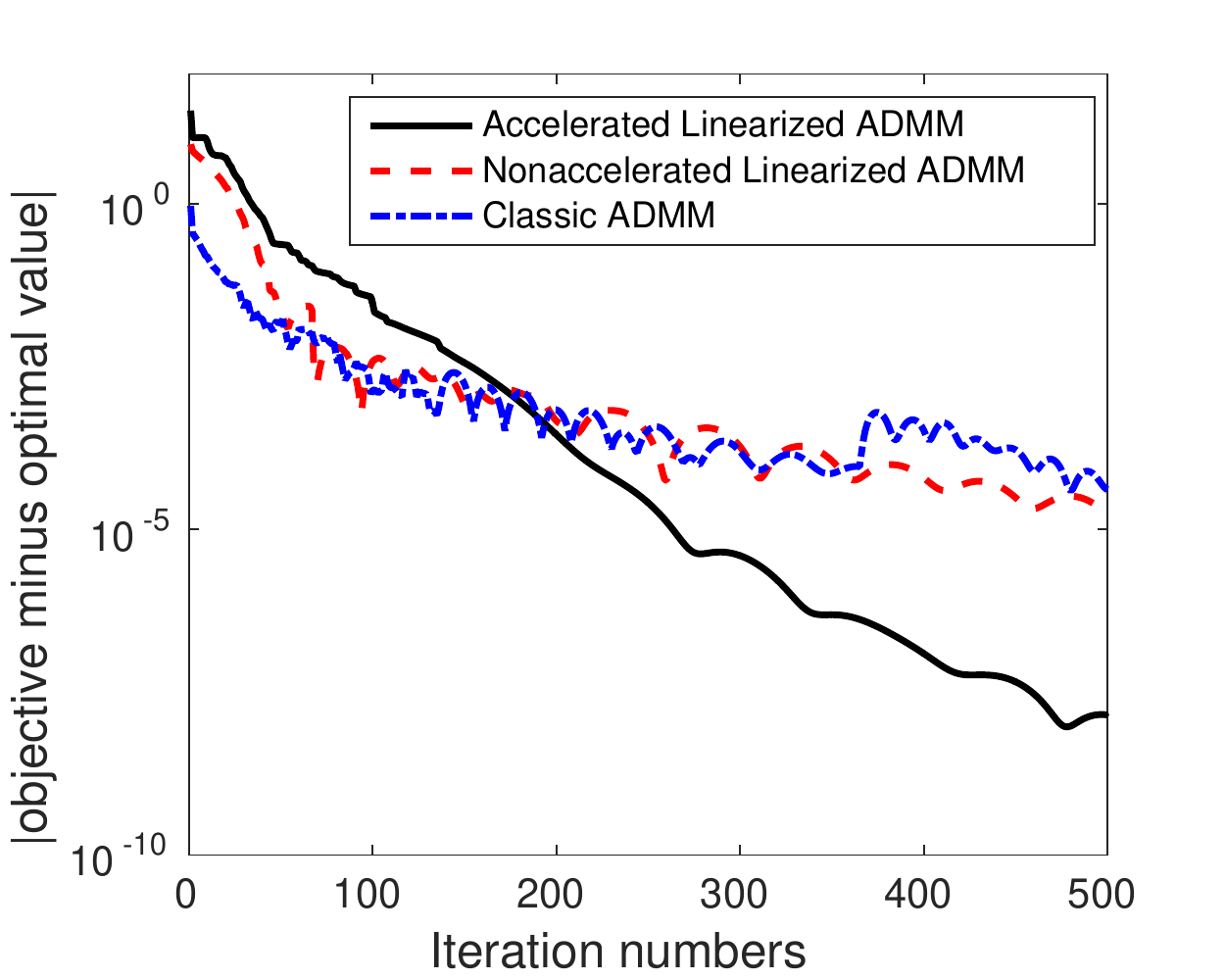}
\end{tabular}
\end{center}
\caption{Results by Algorithm \ref{alg:apadmm} with adaptive parameters (accelerated linearized ADMM) and constant parameters (nonaccelerated linearized ADMM) and also the classic nonlinearized ADMM on solving \eqref{eq:ensvm}. Left: the absolute value of objective of \eqref{eq:ensvm2} minus optimal value $|G(x,y)-G(x^*,y^*)|$; Middle: the violation of feasibility of \eqref{eq:ensvm2} $\|Bx+y-e\|$; Right: the absolute value of objective of \eqref{eq:ensvm} minus optimal value $|F(x)-F(x^*)|$.}\label{fig:svm}
\end{figure}

\section{Conclusions}\label{sec:conclusion}
We have proposed an accelerated linearized augmented Lagrangian method (ALALM) and also an accelerated alternating direction method of multipliers (ALADMM) for solving structured linearly constrained convex programming. We have established $O(1/t^2)$ convergence rate for ALALM by assuming merely weak convexity and for ALADMM by assuming strong convexity to one block variable. Numerical experiments have been performed to demonstrate the validness of acceleration and higher efficiency over existing accelerated methods.

To have the $O(1/t^2)$ convergence rate for the ALALM, our current analysis does not allow linearization to the augmented term, and that may cause great difficulty on solving subproblems if meanwhile we have a complicated nonsmooth term. It is interesting to know whether we can linearize the augmented term and still obtain $O(1/t^2)$ convergence under the same assumptions. We are unable to show this under the setting of Algorithm \ref{alg:alalm}, so it may have to turn to other acceleration technique. We leave this open question to interested readers.

\bibliographystyle{abbrv}
\bibliography{alm}

\end{document}